\newtheorem{theorem}{Theorem}[section]
\newtheorem{lemma}[theorem]{Lemma}
\newtheorem{proposition}[theorem]{Proposition}
\newtheorem{corollary}[theorem]{Corollary}
\newtheorem*{definition}{Definition}
\newtheorem*{example}{Example}
\newtheorem*{remark}{Remark}
\newtheorem*{notation}{Notation}
\newtheorem*{acknowledgement}{Acknowledgements}
\newtheorem*{utheorem}{Theorem}
\numberwithin{equation}{section}
\begin{document}
\title{Groups with the same cohomology as their profinite completions}
\author{
{\sc Karl Lorensen}\\
\\
Mathematics Department\\
Pennsylvania State University, Altoona College\\
3000 Ivyside Park\\
Altoona, PA 16601-3760\\
USA\\
e-mail: {\tt kql3@psu.edu}
}

\maketitle

\begin{abstract} For any positive integer $n$, $\mathcal{A}_n$ is the class of all groups $G$ such that, for $0\leq i\leq n$, $H^i(\hat{G},A)\cong H^i(G,A)$ for every finite discrete $\hat{G}$-module $A$. We describe  certain types of free products with amalgam and HNN extensions that are in some of the classes $\mathcal{A}_n$. In addition, we investigate the residually finite groups in the class $\mathcal{A}_2$.

\vspace{12pt}

\noindent {\it Keywords}:  Profinite completions; Good groups; HNN extensions; Ascending HNN extensions; Free products with amalgamation; Residually finite groups; Right-angled Artin groups.
\end{abstract}

\section{Introduction}

\indent

If $G$ is a group, let $\hat{G}$ denote the profinite completion of $G$ and $c_G:G\to \hat{G}$ the completion map. In this paper we study groups $G$ such that, for any finite discrete $\hat{G}$-module $A$, the map induced by $c_G$ from the continuous cohomology group $H^i(\hat{G},A)$ to the discrete cohomology group $H^i(G,A)$ is an isomorphism for a range of dimensions $i$. When this occurs for $0\leq i\leq n$, we identify the group as lying in the class $\mathcal{A}_n$.  This definition makes both $\mathcal{A}_0$ and $\mathcal{A}_1$ equal to the class of all groups. However, for $n\geq 2$ membership in $\mathcal{A}_n$ becomes more restricted:  $\mathbb Q/\mathbb Z$ and ${\rm PGL}_2(\mathbb C)$, for example, are both not in $\mathcal{A}_2$. Nevertheless, a number of important types of groups are known to belong to all the classes $\mathcal{A}_n$; these include polycyclic groups, free groups and, of course, finite groups. 

The classes $\mathcal{A}_n$ have received intermittent attention in the literature since at least the early 1960s.  The first mention of these groups appears to have been by J-P. Serre [{\bf 13}, Exercises 1, 2,  Chapter 2], who established most of their basic properties. Later, H. Schneebeli \cite{schneebeli} examined the class $\mathcal{A}_2$ by employing the virtual cohomology of a group. Moreover, more recently, in his work on Grothendieck's question on Brauer groups, S. Schr\"oer \cite{schroeer} invoked groups that are in $\mathcal{A}_n$ for all $n\geq 0$.

From the perspective of a group theorist the class $\mathcal{A}_2$ is particularly interesting.  As observed in \cite{serre}, this class consists precisely of those  groups $G$ such that, for every group extension 
$$N\stackrel{\iota}{\rightarrowtail} E\stackrel{\epsilon}{\twoheadrightarrow}  G$$ 
with $N$ finitely generated, the sequence of profinite completions  
$$1\to \hat{N} \stackrel{\hat{\iota}}{\to} \hat{E}\stackrel{\hat{\epsilon}}{\to} \hat{G}\to 1$$ 
is exact. This is equivalent to the assertion that the profinite topology on $E$ induces the full profinite topology on  $N$; in other words, for every $H\leq N$ with $[N:H]<\infty$, there exists $K\leq E$ with $[E:K]<\infty$ such that $K\cap N\leq H$.

As shown in Section 2, the above property of groups in $\mathcal{A}_2$ reveals that the residually finite groups in that class are of a remarkable species:  a group $G$ is both residually finite and in $\mathcal{A}_2$ if and only if every extension of a finitely generated residually finite group by $G$ is again residually finite. Such groups are the the subject of J. Corson and T. Ratkovitch's article \cite{corson}, where they are labelled  ``super residually finite." Modifying Corson and Ratkovitch's term slightly, we will refer to groups that are both residually finite and in $\mathcal{A}_2$ as {\it highly residually finite}. The highly residually finite groups include all polycyclic groups and free groups; moreover, the class of finitely generated highly residually finite groups is closed under the formation of extensions.

Although the classes $\mathcal{A}_n$ for $n\geq 2$ are endowed with appealing properties, it is in general very difficult to determine whether a given group belongs to $\mathcal{A}_n$. 
The goal of the present article is to remedy this situation by identifying some new types of groups in the classes $\mathcal{A}_n$, particularly ones that are highly residually finite. Our main focus is on free products with amalgamation and HNN extensions formed from groups already known to belong to $\mathcal{A}_n$. In Section 3 we employ the Mayer-Vietoris sequences to ascertain conditions under which such constructions yield groups that are again in $\mathcal{A}_n$. For instance, one consequence of our results is that any free product of two virtually polycyclic groups with cyclic amalgam is in $\mathcal{A}_n$ for all $n\geq 0$.  In addition, we deduce that any right-angled Artin group is in $\mathcal{A}_n$ for all $n\geq 0$.

The last section, Section 4, is devoted to proving that certain ascending HNN extensions are highly residually finite. Ascending HNN extensions are HNN extensions that are formed from a monomorphism of the base group. More precisely, if $\phi:G\to G$ is a monomorphism, then the ascending HNN extension of $G$ with respect to $\phi$, denoted $G_{\phi}$, is given by
$$G_{\phi} = \langle G, t \ |\  t^{-1}gt = \phi(g)\ \mbox{for all}\ g\in G\rangle.$$
These constructions have already been studied extensively with regard to their residual finiteness in \cite{sapir, rhem, wise}.  For instance, it is known that having a base group that is either polycyclic or free of finite rank ensures that an ascending HNN extension must be residually finite. Generalizing these results, we prove that any ascending HNN extension with a base group of one of these two types is, in fact, highly residually finite. In the process, we furnish a new proof, significantly shorter than those already present in the literature, that an ascending HNN extension of a polycyclic group is residually finite.  Finally, in the following theorem, we establish our most general result concerning the highly residual finiteness of ascending HNN extensions.

\begin{utheorem} Let
$$1=G_1\unlhd G_2\unlhd G_3\unlhd \cdots G_{r-1}\unlhd G_r=G$$
be a normal series in a group $G$ such that each factor group $G_i/G_{i-1}$ is either finite, free of finite rank or polycyclic. If $\phi:G\to G$ is a monomorphism such that $\phi(G_i)\leq G_i$ for $1\leq i\leq r$, then $G_{\phi}$ is highly residually finite.
\end{utheorem}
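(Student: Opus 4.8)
The plan is to induct on the length $r$ of the given normal series. For the base case $r=2$ the group $G=G_2$ is itself finite, free of finite rank, or polycyclic, and $G_{\phi}$ is an ascending HNN extension of such a group. The free and polycyclic cases are precisely the results already established earlier in this section. In the finite case the injective endomorphism $\phi$ of the finite group $G$ is forced to be an automorphism, so $G_{\phi}\cong G\rtimes\langle t\rangle$ is an extension of the finite group $G$ by $\mathbb{Z}$; since finite groups lie in every $\mathcal{A}_n$ and are trivially residually finite, both kernel and quotient are finitely generated and highly residually finite, and the extension-closure property recorded in the introduction makes $G_{\phi}$ highly residually finite.

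For the inductive step I would isolate the normal subgroup $N=G_{r-1}\unlhd G$ and the quotient $Q=G/N$. Because $\phi(N)\le N$, the map $\phi$ restricts to a monomorphism $\phi_N\colon N\to N$ preserving the induced series $1=G_1\unlhd\cdots\unlhd G_{r-1}=N$ of length $r-1$, so by the inductive hypothesis $N_{\phi_N}$ is highly residually finite; moreover Britton's lemma shows that the subgroup $\langle N,t\rangle\le G_{\phi}$ carries the expected HNN presentation, so $N_{\phi_N}$ embeds in $G_{\phi}$. The theorem then reduces to the following gluing statement, which is the technical heart: if $N\unlhd G$ is $\phi$-invariant with $N_{\phi_N}$ highly residually finite and $Q=G/N$ is finite, free of finite rank, or polycyclic, then $G_{\phi}$ is again highly residually finite. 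Granting this (applied to $N=G_{r-1}$, so that $Q$ is exactly the top factor of the series), the induction closes.

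To prove the gluing statement I would control the profinite topology of $G_{\phi}$ directly. For membership in $\mathcal{A}_2$, note that each series factor, and hence $G$, lies in $\bigcap_n\mathcal{A}_n$, since that class contains finite, free, and polycyclic groups and is closed under extensions by Serre's exercises [\textbf{13}]; as $G_{\phi}$ is an HNN extension whose base and associated subgroups are all copies of $G$, the Mayer--Vietoris criteria of Section~3 reduce $G_{\phi}\in\mathcal{A}_2$ to a compatibility of profinite topologies along $\phi$. For residual finiteness I would use the projection $G_{\phi}\to\mathbb{Z}$ with $t\mapsto 1$, whose kernel is the increasing union $B=\bigcup_{n\ge 0}t^{n}Gt^{-n}$: an element of nonzero $t$-exponent is separated in $\mathbb{Z}$, while an element of $B$ lies in some copy $t^{n}Gt^{-n}\cong G$ and must be separated by a finite quotient of $G$ that is simultaneously $\phi$-compatible, so that it propagates over the whole ascending union. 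Manufacturing such $\phi$-compatible finite quotients is exactly what the free-base and polycyclic-base arguments earlier in the section accomplish, now applied factor by factor using the finite quotients of $N_{\phi_N}$ furnished by induction together with those of $Q$; in particular both halves of ``highly residually finite'' rest on the same profinite-topology control.

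I expect the main obstacle to be precisely this propagation across the stable letter $t$, sharpened by the fact that the endomorphism $\bar\phi$ induced by $\phi$ on $Q$ need \emph{not} be injective: in genuine instances of the hypotheses it can even be the zero map, so $Q_{\bar\phi}$ need not be an ascending HNN extension at all, and the naive sequence $1\to\langle\langle N\rangle\rangle\to G_{\phi}\to Q_{\bar\phi}\to 1$ has a non-finitely-generated, poorly controlled kernel $\langle\langle N\rangle\rangle=\bigcup_{n\ge 0}t^{n}Nt^{-n}$. Consequently the finitely-generated extension-closure property cannot be invoked for this decomposition, and I would instead lean on the profinite-topology characterization of $\mathcal{A}_2$ from Section~2—namely that the profinite topology on any finitely-generated-by-$G_\phi$ extension induces the full profinite topology on its kernel—to transfer the control of $N$ inside $G$ through the $t$-conjugation despite the collapse of $Q$ under $\bar\phi$.
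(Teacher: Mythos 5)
Your outline (induct on $r$; base case from the free, polycyclic and finite cases; inductive step via a ``gluing'' lemma) is reasonable, but the gluing statement you yourself identify as the technical heart is exactly what is left unproven, and the two tools you propose for it do not suffice. First, the route to $G_{\phi}\in\mathcal{A}_2$ through the Mayer--Vietoris criteria of Section~3 is unavailable: Corollary~3.11 requires the base group to be topologically embedded in the HNN extension, and for \emph{ascending} HNN extensions this can fail --- this is precisely why the paper remarks at the end of Section~4 that the Mayer--Vietoris sequence sheds no light on whether these groups lie in $\mathcal{A}_n$ for $n>2$. Second, the invariant you carry through the induction --- that $N_{\phi_N}$ is highly residually finite --- is the wrong one to close the inductive step: on the quotient side the induced map $\bar\phi$ on $Q=G/G_{r-1}$ need not be injective (as you correctly note), so there is no ascending HNN extension of $Q$ to which any inductive statement about HNN extensions could be applied, and nothing in your setup manufactures finite quotients of $Q$ compatible with a non-injective $\bar\phi$. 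Your final paragraph names this obstacle but the proposed workaround (``lean on the profinite-topology characterization of $\mathcal{A}_2$ \dots to transfer the control of $N$ \dots through the $t$-conjugation'') is a hope, not an argument.

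The idea your proposal is missing is to run the induction not on the HNN extensions but on a separability property of the endomorphism itself: the paper's property $\mathfrak{P}$, namely that for each $g\in G$ there exists $N\unlhd_f G$ with $\phi^i(g)\in N \Leftrightarrow \phi^i(g)=1$ for all $i\geq 0$. This property is (i) defined for arbitrary, possibly non-injective, endomorphisms, and established for \emph{all} endomorphisms of finite, polycyclic and finitely generated free groups (for free groups this is Lemma~4.13, whose stable-kernel/hopfian argument exists precisely to handle non-injectivity); (ii) stable under extensions with finitely generated residually finite kernel and highly residually finite quotient (Lemma~4.6) --- this is the true gluing lemma, and its proof uses the $\mathcal{A}_2$-property of the quotient to pull separating finite-index subgroups of the kernel up to the whole group; and (iii) convertible into the desired conclusion by Proposition~4.5 (for finitely generated $G$, the group $G_{\phi}$ is residually finite if and only if $\phi$ has $\mathfrak{P}$) together with Proposition~4.7, which obtains the ``highly'' part without any Mayer--Vietoris input: by Theorem~2.10(iii) one must check that every extension $F\rightarrowtail E\twoheadrightarrow G_{\phi}$ with $F$ finite is residually finite, and such an $E$ is itself an ascending HNN extension $H_{\psi}$ with $H=\epsilon^{-1}(G)$, whose defining endomorphism $\psi$ inherits $\mathfrak{P}$ from Lemma~4.6. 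The paper's induction (Lemma~4.15) is then a one-line application of Lemma~4.6 to $G_{r-1}\rightarrowtail G\twoheadrightarrow G/G_{r-1}$. Without property $\mathfrak{P}$, or an equivalent device that is quantified over non-injective induced endomorphisms and over all finite extensions of $G_{\phi}$, your inductive step cannot be completed.
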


\vspace{20pt}

\begin{notation}{\rm  We write $H\leq_fG$  (respectively $H\unlhd_f N$) when $H$ is a subgroup (respectively normal subgroup) of finite index in the group $G$.

Since we will be working with both continuous and discrete cohomology, it will be advantageous to distinguish between the two types in our notation. Henceforth we will employ $H^*(\  \ ,\  \ )$ for discrete cohomology and $H^*_c(\  \ ,\  \ )$ for continuous cohomology.}

\end{notation}

\vspace{20pt}

\section{Elementary properties of the classes $\mathcal{A}_n$} 

\vspace{20pt}

In this section we discuss the elementary properties of the classes $\mathcal{A}_n$. We begin with the following lemma, due to Serre  [{\bf 13}, Exercise 1a, Chapter 2].

\begin{lemma}{\rm (Serre)} Assume $n$ is a nonnegative integer. For a group $G$ the following statements are equivalent.
\vspace{5pt}

(i) For any finite discrete $\hat{G}$-module $A$, the map $H_c^i(\hat{G},A)\to H^i(G,A)$ induced by $c_G$ is bijective for $0\leq i\leq n$ and injective for $i=n+1$.
\vspace{5pt}

(ii) For any finite discrete $\hat{G}$-module $A$, the map $H_c^i(\hat{G},A)\to H^i(G,A)$ induced by $c_G$ is surjective for $0\leq i\leq n$.
\vspace{5pt}
\end{lemma}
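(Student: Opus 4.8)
The implication (i) $\Rightarrow$ (ii) is immediate, since a bijection is in particular a surjection. The substance of the lemma is therefore the reverse implication, which I would prove by induction on $n$. Throughout, write $\alpha^i_A : H^i_c(\hat G, A) \to H^i(G,A)$ for the comparison map induced by $c_G$; these maps are natural in $A$ and commute with the connecting homomorphisms attached to a short exact sequence of discrete $\hat G$-modules, so that $(\alpha^i)_{i\ge 0}$ is a morphism of cohomological $\delta$-functors. Two facts hold unconditionally and anchor the induction. First, $\alpha^0_A$ is always bijective: for $a\in A$ the stabiliser is open, hence closed, so if it contains the dense subgroup $c_G(G)$ it is all of $\hat G$; thus $A^G=A^{\hat G}$. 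Second, $\alpha^1_A$ is always injective: a continuous cocycle whose restriction to $G$ is a coboundary may be adjusted by that coboundary to one vanishing on $c_G(G)$, and a continuous map into a discrete module vanishing on a dense subgroup vanishes identically. These two statements are exactly assertion (i) for $n=0$, which gives the base case.

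The engine of the argument is a dimension-shifting module. Given a finite discrete $\hat G$-module $A$, embed it by the unit of adjunction into the coinduced module $A^\ast=\mathrm{Coind}^{\hat G}_{\{1\}}(A)$, and set $A^{\ast\ast}=A^\ast/A$. Although $A^\ast$ is infinite, it is a directed union of finite discrete submodules, and both $H^i_c(\hat G,-)$ and $H^i(G,-)$ commute with such directed colimits. Being coinduced, $A^\ast$ is acyclic for continuous cohomology: $H^i_c(\hat G,A^\ast)=0$ for $i\ge 1$. The decisive observation is that hypothesis (ii) forces $A^\ast$ to be acyclic for \emph{discrete} cohomology as well in the relevant range. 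Indeed, writing $A^\ast=\varinjlim_j M_j$ with each $M_j$ finite, surjectivity of every $\alpha^i_{M_j}$ for $0\le i\le n$ passes to the colimit to give surjectivity of $\alpha^i_{A^\ast}$ for $0\le i\le n$; since the source $H^i_c(\hat G,A^\ast)$ vanishes for $1\le i\le n$, so does the target $H^i(G,A^\ast)$. This transfer of acyclicity from the continuous to the discrete side, driven by (ii), is the heart of the proof.

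With acyclicity in hand the induction proceeds smoothly. Assume the implication proved with $n$ replaced by $n-1$ (for every group and every module that is a directed union of finite submodules), and assume (ii) holds up to degree $n$. The induction hypothesis already yields that $\alpha^n_A$ is injective, while surjectivity is exactly hypothesis (ii) in degree $n$; hence $\alpha^n_A$ is bijective. It remains to show that $\alpha^{n+1}_A$ is injective. Applying the long exact sequences to $0\to A\to A^\ast\to A^{\ast\ast}\to 0$ and using the vanishing of $H^i_c(\hat G,A^\ast)$ for all $i\ge 1$ and of $H^i(G,A^\ast)$ for $1\le i\le n$, the connecting maps give an isomorphism $H^n_c(\hat G,A^{\ast\ast})\xrightarrow{\sim} H^{n+1}_c(\hat G,A)$ and an injection $H^n(G,A^{\ast\ast})\hookrightarrow H^{n+1}(G,A)$. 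These fit into a commutative square whose verticals are $\alpha^n_{A^{\ast\ast}}$ and $\alpha^{n+1}_A$. Since $A^{\ast\ast}$ is again a directed union of finite submodules, the induction hypothesis makes $\alpha^n_{A^{\ast\ast}}$ injective; chasing the square, with the top map an isomorphism and the bottom map and $\alpha^n_{A^{\ast\ast}}$ both injective, forces $\alpha^{n+1}_A$ to be injective. This completes the inductive step.

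The main obstacle is precisely that the category of finite discrete $\hat G$-modules has no serviceable supply of cohomologically trivial objects, so one cannot dimension-shift within it directly. The remedy is to leave that category for the infinite coinduced module $A^\ast$ and to recover acyclicity on the discrete side from the surjectivity hypothesis through the commutation of both theories with directed colimits. One should therefore run the induction over the class of all discrete $\hat G$-modules that are directed unions of finite submodules — which contains $A^\ast$ and $A^{\ast\ast}$ — rather than over finite modules alone, so that the induction hypothesis genuinely applies to $A^{\ast\ast}$; specialising to finite $A$ then recovers the statement of the lemma.
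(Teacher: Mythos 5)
The skeleton of your argument (the $\delta$-functor ladder, dimension shifting, and the two unconditional facts that $\alpha^0$ is bijective and $\alpha^1$ injective) is sound, and (i)$\Rightarrow$(ii) is indeed trivial; but there is a genuine gap at the step you yourself call the heart of the proof. You need $H^i(G,A^\ast)=0$ for $1\le i\le n$, and you derive it from the claim that ``both $H^i_c(\hat G,-)$ and $H^i(G,-)$ commute with directed colimits.'' The first half is true (continuous cochains on the compact group $\hat G$ factor through finite stages), but the second half is false for a general group $G$: commutation of $H^i(G,-)$ with directed colimits in degrees $\le n$ is essentially equivalent to $G$ being of type $\mathrm{FP}_n$ (Bieri, Brown), whereas the lemma is asserted---and used in the paper---for arbitrary $G$. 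The failure is concrete. Take $G$ free of infinite rank on $x_1,x_2,\dots$ (which satisfies (ii) for every $n$, being free) and $A=\mathbb{Z}/p$ with trivial action. Restricted along $c_G$, the module $A^\ast$ is the module of functions $G\to\mathbb{Z}/p$ that factor through some finite quotient $G/N$, $N\unlhd_f G$. Writing $\epsilon_i(w)$ for exponent sums, put $h(w)=\sum_{i<j}\epsilon_i(w)\epsilon_j(w)\bmod p$. One computes, for every $g\in G$, that $h(xg)-h(x)=h(g)+\sigma(g)\sigma(x)-\sum_i\epsilon_i(g)\epsilon_i(x)$, where $\sigma=\sum_i\epsilon_i$; this is a constant plus a finite linear combination of mod-$p$ homomorphisms of $G$, hence lies in $A^\ast$. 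Thus $g\mapsto g\cdot h-h$ is a derivation of $G$ with values in $A^\ast$. It is not inner: otherwise $h$ would differ from an element of $A^\ast$ by a constant and so factor through some finite quotient $G/N$; but infinitely many generators fall into finitely many cosets, so $x_i^{-1}x_j\in N$ for some $i<j$, while $h(x_i^{-1}x_j)=-1\neq 0=h(1)$. Hence $H^1(G,A^\ast)\neq 0$, even though $H^1_c(\hat G,A^\ast)=0$ and every $\alpha^1_{M_j}$ is surjective (indeed $\varinjlim_j H^1(G,M_j)=0$). Surjectivity does not pass to the colimit, and your transfer of acyclicity already breaks in degree $1$.

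Note that the paper itself gives no proof---it defers to Serre's exercise---and Serre's argument is built precisely to avoid this trap: one never leaves the category of \emph{finite} modules. Since $A$ is finite, every class of $H^i_c(\hat G,A)$ is inflated from a finite quotient $Q=\hat G/U$, and that single class is killed by embedding $A$ into the finite $\hat G$-module $\mathrm{Map}(Q,A)$, which is coinduced, hence acyclic, for $Q$; likewise, by hypothesis (ii) any class of $H^i(G,A)$ lies in the image of $\alpha^i$ and therefore also dies after such a finite embedding. Your ladder argument, run with one such embedding $A\hookrightarrow A'$ at a time (so that $A''=A'/A$ is again finite), then goes through verbatim: induction gives injectivity of $\alpha^q$ for $q\le n+1$, which together with (ii) gives bijectivity for $q\le n$, with no colimit commutation anywhere. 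This is exactly the content of the property Serre calls $\mathrm{C}_n$, and it is why the paper's Remark following the lemma insists that the auxiliary module $M'$ there must be finite: that finiteness is not a typographical nicety but precisely what your passage to the infinite module $A^\ast$ gives up. As written, your proof establishes the lemma only for groups of type $\mathrm{FP}_n$, a materially weaker statement.
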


\begin{remark}{\rm For the convenience of  those readers who wish to work through the details of Serre's proof in [{\bf 13}, Exercise 1a, Chapter 2], we point out that there is an error in his definition of the group-theoretic property he identifies as $\mbox{C}_n$: the module $M'$ should be in the class $C_K'$; that is, it must be finite. This misprint also appears in the French editions of \cite{serre}.}
\end{remark}

Serre's lemma forms the basis for his definition of the classes $\mathcal{A}_n$.

\begin{definition} {\rm Let $n$ be a nonnegative integer. The class $\mathcal{A}_n$ is the class of all groups $G$ satisfying the two equivalent properties in Lemma 2.1.}
\end{definition}

We can at once make the following assertion about the classes $\mathcal{A}_0$ and $\mathcal{A}_1$.

\begin{proposition}{\rm (Serre [{\bf 13}, Exercise 1b, Chapter 2])} The classes $\mathcal{A}_0$ and $\mathcal{A}_1$ are each equal to the class of all groups.
\end{proposition}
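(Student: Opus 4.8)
The plan is to use criterion (ii) of Lemma 2.1, according to which $G$ lies in $\mathcal{A}_n$ precisely when, for every finite discrete $\hat{G}$-module $A$, the map $H^i_c(\hat{G},A)\to H^i(G,A)$ induced by $c_G$ is surjective for $0\leq i\leq n$. Thus for $\mathcal{A}_0$ it suffices to treat degree $0$, and for $\mathcal{A}_1$ degrees $0$ and $1$. Throughout I would exploit two facts: that $c_G(G)$ is dense in $\hat{G}$, and that, since $A$ is finite and discrete, the structure map $\hat{G}\to\mathrm{Aut}(A)$ is continuous with open kernel.

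For degree $0$ I would identify $H^0_c(\hat{G},A)=A^{\hat{G}}$ and $H^0(G,A)=A^G$ and show these coincide. For each $a\in A$ the stabilizer $\mathrm{Stab}_{\hat{G}}(a)$ is a closed (indeed open) subgroup of $\hat{G}$, being the preimage of $\{a\}$ under the continuous orbit map $x\mapsto x\cdot a$. If $a\in A^G$, then $c_G(G)\subseteq\mathrm{Stab}_{\hat{G}}(a)$; as the latter is closed and $c_G(G)$ dense, we get $\mathrm{Stab}_{\hat{G}}(a)=\hat{G}$, so $a\in A^{\hat{G}}$. The reverse inclusion is immediate, whence $A^{\hat{G}}=A^G$ and the comparison map is an isomorphism. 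This already shows that $\mathcal{A}_0$ is the class of all groups.

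For degree $1$ I would represent a class in $H^1(G,A)$ by a crossed homomorphism $d\colon G\to A$ and lift it to a continuous crossed homomorphism on $\hat{G}$. The device is the semidirect product $A\rtimes\hat{G}$ formed from the continuous $\hat{G}$-action: because $A$ is finite and the action has open kernel, $A\rtimes\hat{G}$ is an inverse limit of finite groups, hence profinite. The cocycle identity for $d$ makes $\sigma\colon G\to A\rtimes\hat{G}$, $g\mapsto(d(g),c_G(g))$, a group homomorphism, which by the universal property of the profinite completion extends uniquely to a continuous homomorphism $\hat{\sigma}\colon\hat{G}\to A\rtimes\hat{G}$ with $\hat{\sigma}\circ c_G=\sigma$. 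Composing with the projection $\pi\colon A\rtimes\hat{G}\to\hat{G}$ yields a continuous endomorphism $\pi\circ\hat{\sigma}$ of $\hat{G}$ that agrees with the identity on the dense subgroup $c_G(G)$; hence $\pi\circ\hat{\sigma}=\mathrm{id}_{\hat{G}}$, so $\hat{\sigma}$ is a continuous section of $\pi$. Writing $\hat{\sigma}(x)=(\tilde{d}(x),x)$ then exhibits a continuous crossed homomorphism $\tilde{d}\colon\hat{G}\to A$ with $\tilde{d}\circ c_G=d$, so $[d]$ lies in the image of $H^1_c(\hat{G},A)$. Combined with the degree-$0$ case, this proves that $\mathcal{A}_1$ is also the class of all groups. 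The routine verifications—that $A\rtimes\hat{G}$ is genuinely profinite and that continuous crossed homomorphisms are exactly the continuous sections of $\pi$—require some care, but the one genuine pressure point is the identity $\pi\circ\hat{\sigma}=\mathrm{id}_{\hat{G}}$: everything hinges on combining the universal property of $\hat{G}$ with the density of $c_G(G)$ to promote an equality on a dense subgroup to an equality of continuous maps.
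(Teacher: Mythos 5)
Your proof is correct, but note that the paper itself offers no argument for this proposition at all: it is stated with a bare citation to Serre's exercise, so there is no in-paper proof to compare against. Your write-up is essentially the standard solution to that exercise, and it is sound at every step. The degree-$0$ case is handled exactly as one would expect: the stabilizer of a point of a finite discrete module is open, hence closed, so density of $c_G(G)$ forces $A^G = A^{\hat{G}}$. The degree-$1$ case correctly exploits the dictionary between crossed homomorphisms and sections of the split extension $A \rtimes \hat{G} \twoheadrightarrow \hat{G}$: the semidirect product is indeed profinite (it is $\varprojlim\, A \rtimes (\hat{G}/U)$ over open normal $U$ contained in the open kernel of $\hat{G} \to \mathrm{Aut}(A)$), the cocycle identity does make $g \mapsto (d(g), c_G(g))$ a homomorphism, and the pressure point you flag --- promoting $\pi \circ \hat{\sigma} = \mathrm{id}$ from the dense subgroup $c_G(G)$ to all of $\hat{G}$ --- is settled precisely by continuity of both maps into a Hausdorff group. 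One small remark: surjectivity is obtained here at the level of cocycles, not merely of cohomology classes, which is stronger than what Lemma 2.1(ii) demands; in degree $1$ you even show the comparison map is bijective (injectivity being the paper's Lemma 2.7), though only surjectivity is needed. The practical value of your argument over the paper's citation is self-containedness; it also makes visible exactly which properties of the completion functor (universal property plus density of the image) drive the result.
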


In addition, we can immediately cite some examples of groups that lie in all of the classes $\mathcal{A}_n$.

\begin{proposition} Finite groups and free groups are in the class $\mathcal{A}_n$ for all $n\geq 0$.
\end{proposition}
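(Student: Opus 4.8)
The plan is to handle the two families separately, disposing of finite groups by a direct comparison of cohomology theories and free groups by combining Proposition 2.3 with a cohomological-dimension argument.

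For a finite group $G$ I would first note that the profinite completion $\hat{G}$ equals $G$ and that $c_G$ is the identity map: the trivial subgroup has finite index in $G$, so it is cofinal among the finite-index subgroups and the inverse limit defining $\hat{G}$ collapses to $G$. Viewing $G$ as a discrete profinite group, every cochain $\hat{G}^i\to A$ is automatically continuous, so for each finite discrete $\hat{G}$-module $A$ the continuous cochain complex computing $H^*_c(\hat{G},A)$ coincides with the ordinary cochain complex computing $H^*(G,A)$. Hence $H^i_c(\hat{G},A)\to H^i(G,A)$ is an isomorphism in every degree $i$, and criterion (i) of Lemma 2.1 places $G$ in $\mathcal{A}_n$ for all $n$.

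For a free group $F$ the goal is to show that $H^i_c(\hat{F},A)\to H^i(F,A)$ is an isomorphism for all $i\geq 0$ and all finite discrete $\hat{F}$-modules $A$; criterion (i) of Lemma 2.1 then yields $F\in\mathcal{A}_n$ for every $n$. The low degrees require no computation: by Proposition 2.3 every group lies in $\mathcal{A}_1$, so the comparison map is already bijective for $i=0,1$. In degrees $i\geq 2$ I would argue that both groups vanish. On the discrete side, $F$ acts freely on its Cayley tree, which furnishes a free resolution of $\mathbb{Z}$ of length one, so the cohomological dimension of $F$ is at most $1$ and $H^i(F,A)=0$. On the profinite side, the completion $\hat{F}$ is a free profinite group, whose cohomological dimension is likewise at most $1$, so $H^i_c(\hat{F},A)=0$. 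Thus the map is trivially an isomorphism for $i\geq 2$ as well, completing the range.

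The only substantive input beyond Lemma 2.1 and Proposition 2.3 is the pair of facts used in the free case: that the profinite completion of a free group is a free profinite group, and that free profinite groups have cohomological dimension at most $1$. These are standard results in the cohomology theory of profinite groups, and I would invoke them rather than reprove them. I expect no genuine difficulty here, since the two potentially hard degrees $i=0,1$ are supplied gratis by Proposition 2.3 and every higher degree is killed by the dimension estimate, so what remains is pure bookkeeping against the criteria of Lemma 2.1.
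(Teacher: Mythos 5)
Your proof is correct and takes essentially the same approach as the paper: finite groups are dispatched by observing $\hat{G}=G$ (which the paper leaves as ``obvious''), and free groups by the vanishing of both $H^i(G,A)$ and $H^i_c(\hat{G},A)$ in degrees $i\geq 2$, with degrees $0,1$ supplied by the fact that $\mathcal{A}_0$ and $\mathcal{A}_1$ contain all groups; you merely fill in the standard justifications (cohomological dimension $\leq 1$ on both sides) that the paper asserts without proof. One small correction: the fact that every group lies in $\mathcal{A}_1$ is Proposition 2.2 in the paper's numbering, not Proposition 2.3, which is the statement you are proving.
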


\begin{proof} That finite groups are in $\mathcal{A}_n$ is obvious. Moreover, if $G$ is free, then, for any finite discrete $\hat{G}$-module $A$, both $H^i(G,A)$ and $H_c^i(\hat{G},A)$ are trivial for $i\geq 2$. Thus every free group is in $\mathcal{A}_n$.  
\end{proof}

Serre \cite{serre} observes that the class $\mathcal{A}_2$ has an intriguing property.
In order to appreciate this property, we will require the following notion.

\begin{definition}{\rm  If $H\leq G$, we say that $H$ is {\it topologically embedded} in $G$ if the subspace topology induced on $H$ by the profinite topology on $G$ coincides with the full profinite topology on $H$. 

If $H$ is a topologically embedded subgroup in $G$, we write $H\leq_t G$; if, in addition, $H$ is normal, we write $H\unlhd_t G$. }
\end{definition}

Note that the following three assertions concerning a subgroup $H$ of a group $G$ are equivalent:

(i) $H\leq_t G$;

(ii) for each $K\leq_f H$, there exists $L\leq_f G$ such that $L\cap H\leq K$;

(iii) $\hat{H}$ embeds in $\hat{G}$.
\vspace{5pt}

As we shall see below, group extensions with quotients in the class $\mathcal{A}_2$ behave particularly nicely with respect to profinite completion. In general, profinite completion is a right exact functor; in other words, for any extension $$N\stackrel{\iota}{\rightarrowtail}
G\stackrel{\epsilon}{\twoheadrightarrow} Q,$$ the sequence $$\hat{N}\stackrel{\hat{\iota}}{\to} \hat{G}\stackrel{\hat{\epsilon}}{\to}  \hat{Q}\to 1$$ is exact. However, if $Q$ belongs to $\mathcal{A}_2$ and $N$ is finitely generated, then 
$$1\to \hat{N}\stackrel{\hat{\iota}}{\to} \hat{G}\stackrel{\hat{\epsilon}}{\to}  \hat{Q}\to 1$$ 
is exact; that is, $\mbox{Im}\ \iota \unlhd_t G$. This is the content of the following proposition, part of whose proof is sketched in [{\bf 13}, Exercise 2, Chapter 2].

\begin{proposition} The following statements are equivalent for a group $G$.
\vspace{5pt}

\noindent (i) The group $G$ belongs to $\mathcal{A}_2$. 
\vspace{5pt}

\noindent (ii) For every group extension
$1\to N\stackrel{\iota}{\to} E\stackrel{\epsilon}{\to}  G\to 1$ with $N$ finitely generated, the map $\hat{\iota}:\hat{N}\to \hat{E}$ is an injection.
\end{proposition}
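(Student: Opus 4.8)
The plan is to convert each implication into a statement about the functors $H^2$ and $H^3$, using the classical correspondence between second cohomology and group extensions, carried out simultaneously in the discrete and the continuous settings. Throughout I will invoke Lemma 2.1 in the form that $G\in\mathcal A_2$ exactly when, for every finite discrete $\hat G$-module $A$, the map $H^i_c(\hat G,A)\to H^i(G,A)$ induced by $c_G$ is bijective for $i\le 2$ and injective for $i=3$. Since every group lies in $\mathcal A_1$ by Proposition 2.2, surjectivity in dimensions $i\le 1$ is automatic, so the only new cohomological content concerns dimensions $2$ and $3$.

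For (ii)$\Rightarrow$(i) I would check that $H^2_c(\hat G,A)\to H^2(G,A)$ is surjective for every finite $A$; together with the previous remark this yields condition (ii) of Lemma 2.1 and hence $G\in\mathcal A_2$. Represent a class $\xi\in H^2(G,A)$ by an extension $1\to A\to E\to G\to 1$. As $A$ is finite it is finitely generated, so hypothesis (ii) makes $\hat\iota\colon\hat A\to\hat E$ injective; since $\hat A=A$ and completion is always right exact, the row $1\to A\to\hat E\to\hat G\to 1$ is a short exact sequence of profinite groups in which $A$ is closed and the conjugation action continuous. It therefore defines a class $\eta\in H^2_c(\hat G,A)$, and the completion maps assemble into a morphism of extensions from $E$ to $\hat E$ over $c_G$; by naturality the image of $\eta$ under the map induced by $c_G$ is $\xi$. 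Thus the map is surjective.

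The implication (i)$\Rightarrow$(ii) is where the work lies. To see that $\hat\iota$ is injective it suffices, by the equivalences recorded after the definition of topological embedding, to produce for each $K\unlhd_f N$ some $L\unlhd_f E$ with $L\cap N\le K$. Finite generation of $N$ limits the number of its subgroups of index $[N:K]$, so the $E$-conjugates of $K$ are finite in number and their intersection is a normal subgroup $K_0\unlhd E$ of finite index in $N$ and contained in $K$; replacing $K$ by $K_0$ I may assume $K\unlhd E$ and pass to $1\to\bar N\to\bar E\to G\to 1$ with $\bar N=N/K$ \emph{finite} and $\bar E=E/K$. The goal is now to embed $\bar N$ into $\widehat{\bar E}$, and for this I would lift the extension to a continuous extension of $\hat G$. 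Its abstract kernel $\psi\colon G\to\mathrm{Out}(\bar N)$ maps into a finite group and so extends to a continuous $\hat\psi\colon\hat G\to\mathrm{Out}(\bar N)$; give $M:=Z(\bar N)$ the resulting finite $\hat G$-module structure. Eilenberg--MacLane obstruction theory, performed with continuous cochains, locates the obstruction to realising $\hat\psi$ in $H^3_c(\hat G,M)$, and its image under the map induced by $c_G$ is the obstruction attached to $\psi$, which vanishes because $\bar E$ already realises $\psi$. As $G\in\mathcal A_2$ makes $H^3_c(\hat G,M)\to H^3(G,M)$ injective, the continuous obstruction vanishes too, so a continuous extension realising $\hat\psi$ exists; the surjectivity of $H^2_c(\hat G,M)\to H^2(G,M)$ then lets me adjust it by a class of $H^2_c(\hat G,M)$ so that its pull-back along $c_G$ is the class of $\bar E$. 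Writing $\tilde E$ for the resulting profinite extension, the pull-back isomorphism gives a morphism $\bar E\to\tilde E$ over $c_G$ that is the identity on $\bar N$; it factors through $\widehat{\bar E}$, and since $\bar N\to\tilde E$ is injective so is $\bar N\to\widehat{\bar E}$.

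The chief obstacle is the nonabelian kernel in this last step: one has to set up the extension obstruction calculus in parallel over $G$ and over the profinite group $\hat G$ and verify that the map induced by $c_G$ carries the continuous obstruction class, and the $H^2$-torsor structure, to their discrete counterparts. The finiteness of $\mathrm{Out}(\bar N)$ and of $M=Z(\bar N)$ is what confines all of this to the finite-module cohomology controlled by $\mathcal A_2$; it is precisely the injectivity at $i=3$ built into Serre's formulation that annihilates the lifted obstruction, while the bijectivity at $i=2$ singles out the correct extension class. By contrast, the reverse implication needs only the elementary fact that $H^2$ classifies extensions.
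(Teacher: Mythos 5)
Your proof of (ii)$\Rightarrow$(i) is exactly the paper's: realize $\xi\in H^2(G,A)$ by an extension with kernel $A$, observe that $A$ is finitely generated so hypothesis (ii) together with right exactness of completion yields a profinite extension of $\hat G$ by $A=\hat A$, and conclude by naturality that its class maps to $\xi$; like you, the paper needs only surjectivity in dimension $2$, dimensions $0,1$ being automatic by Proposition 2.2 and Lemma 2.1. The genuine difference lies in (i)$\Rightarrow$(ii): the paper gives no proof of this direction at all, citing Serre, whereas you reconstruct one. Your outline --- replace $K$ by the intersection $K_0$ of its finitely many $E$-conjugates to reduce to a finite kernel $\bar N$; extend the abstract kernel $\psi\colon G\to\mathrm{Out}(\bar N)$ continuously to $\hat\psi\colon\hat G\to\mathrm{Out}(\bar N)$ (it factors through a finite quotient); kill the continuous Eilenberg--MacLane obstruction in $H^3_c(\hat G,Z(\bar N))$ using the injectivity in dimension $3$ that Lemma 2.1 extracts from membership in $\mathcal{A}_2$; then use surjectivity of $H^2_c(\hat G,Z(\bar N))\to H^2(G,Z(\bar N))$ and the $H^2$-torsor structure to choose a profinite extension $\tilde E$ pulling back to $\bar E$, so that $\bar E\to\tilde E$ factors through $\widehat{\bar E}$ and $\bar N$ embeds --- is correct, and it is essentially the intended solution of Serre's exercise. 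What it buys is self-containedness; what it costs is the point you yourself flag: the obstruction/torsor formalism with continuous cochains for a nonabelian finite kernel, and its naturality along $c_G$, is genuine machinery that still has to be set up (it does go through precisely because $\bar N$ is finite, so all set-theoretic lifts and cocycles may be chosen locally constant). If you want to avoid nonabelian obstruction theory altogether, there is a cheaper route: pass to the centralizer $C=C_{\bar E}(\bar N)$, which is normal of finite index in $\bar E$; then $\bar N\cap C=Z(\bar N)$ and $1\to Z(\bar N)\to C\to C/Z(\bar N)\to 1$ is a \emph{central} extension of a finite-index subgroup of $G$, so the abelian $H^2$ comparison embeds $Z(\bar N)$ into $\hat C\leq\widehat{\bar E}$, while elements of $\bar N$ outside $C$ are already separated by the finite quotient $\bar E/C$; this trades your obstruction calculus for the fact (also one of Serre's exercises) that $\mathcal{A}_2$ is inherited by subgroups of finite index.
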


\begin{proof} The direction ((i)$\Longrightarrow$(ii)) is proved in \cite{serre}.
\vspace{5pt}

((ii)$\Longrightarrow$(i)) We prove that the map $H_c^2(\hat{G},A)\to H^2(G,A)$ is surjective for any finite discrete $\hat{G}$-module $A$. To establish this, let $\xi \in H^2(G,A)$ and take $A\rightarrowtail G\twoheadrightarrow Q$ to be a group extension corresponding to the cohomology class $\xi$. 
Forming the profinite completions, we obtain an extension of profinite groups $A\rightarrowtail \hat{G}\twoheadrightarrow \hat{Q}$. The cohomology class in $H^2_c(\hat{G},A)$ of this extension, then, is mapped to $\xi$ by the map  $H_c^2(\hat{G},A)\to H^2(G,A)$.
\end{proof}

The above proposition enables us to establish that each class $\mathcal{A}_n$ is closed under the formation of certain extensions.

\begin{theorem} Let $n$ be a positive integer and $N\rightarrowtail G\twoheadrightarrow Q$ a group extension. Assume $N$ and $Q$ are both in $\mathcal{A}_n$ and $N$ is of type $\mbox{FP}_{n-1}$.  Then $G$ is in $\mathcal{A}_n$.
\end{theorem}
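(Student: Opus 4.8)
The plan is to compare the two Lyndon--Hochschild--Serre (LHS) spectral sequences attached to the extension $N\rightarrowtail G\twoheadrightarrow Q$, one computed discretely and one computed continuously for profinite groups, via the maps induced by the completion maps. First I would dispose of $n=1$, which is trivial since $\mathcal{A}_1$ is the class of all groups (Proposition 2.2), and assume $n\geq 2$. Since the characterization in Lemma 2.1 shows at once that $\mathcal{A}_n\subseteq\mathcal{A}_2$, the quotient $Q$ lies in $\mathcal{A}_2$; moreover $N$ is of type $\mathrm{FP}_{n-1}$ with $n-1\geq 1$, hence finitely generated. Proposition 2.4 then guarantees that $\hat{\iota}\colon\hat{N}\to\hat{G}$ is injective, so that, combined with the right exactness of profinite completion, the sequence $1\to\hat{N}\to\hat{G}\to\hat{Q}\to 1$ is exact. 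This is precisely the input needed to form the continuous LHS spectral sequence on the profinite side.

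Now fix a finite discrete $\hat{G}$-module $A$. The completion maps constitute a morphism of group extensions, and functoriality of the LHS spectral sequence supplies a morphism from the continuous spectral sequence $\hat{E}_2^{p,q}=H_c^p(\hat{Q},H_c^q(\hat{N},A))$, converging to $H_c^{p+q}(\hat{G},A)$, to the discrete one $E_2^{p,q}=H^p(Q,H^q(N,A))$, converging to $H^{p+q}(G,A)$. The crux is to show that the induced map $f_2^{p,q}\colon\hat{E}_2^{p,q}\to E_2^{p,q}$ is an isomorphism for all $(p,q)$ with $p+q\leq n$, and I would split this according to $q$. For $q\leq n-1$, membership $N\in\mathcal{A}_n$ makes $H_c^q(\hat{N},A)\to H^q(N,A)$ an isomorphism, while type $\mathrm{FP}_{n-1}$ forces $H^q(N,A)$ to be finite; the coefficient module is therefore a finite discrete $\hat{Q}$-module, and since $Q\in\mathcal{A}_n$ and $p\leq n$, the comparison $H_c^p(\hat{Q},-)\to H^p(Q,-)$ on it is an isomorphism. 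The only term with $q=n$ and $p+q\leq n$ is the edge term $(0,n)$, where $H^n(N,A)$ need not be finite; here I would instead use that $H_c^n(\hat{N},A)\to H^n(N,A)$ is still an isomorphism and that $H_c^n(\hat{N},A)$ is a \emph{discrete} $\hat{Q}$-module, so its $\hat{Q}$-invariants coincide with its $Q$-invariants (a stabilizer is open, hence closed, and contains the dense subgroup $c_Q(Q)$), whence $\hat{E}_2^{0,n}\to E_2^{0,n}$ is an isomorphism.

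Finally I would invoke the comparison theorem for first-quadrant cohomological spectral sequences to pass from these $E_2$-level isomorphisms to isomorphisms $H_c^i(\hat{G},A)\to H^i(G,A)$ in degrees $i\leq n$, which yields $G\in\mathcal{A}_n$ by Lemma 2.1. The point requiring attention is that the differentials $d_r\colon E_r^{p,q}\to E_r^{p+r,q-r+1}$ emanating from the diagonal $p+q=n$ land in total degree $n+1$, where $f_2$ is not controlled a priori. The saving observation is that every such target has first coordinate $p+r\geq 2$, hence second coordinate at most $n-1$, so it lies in the finite regime treated above and $f_2$ is an isomorphism there as well; in particular the one potentially troublesome term $(1,n)$, for which finiteness fails, is never the target of a differential issuing from the $n$-diagonal. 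Since the sources of incoming differentials lie on the $(n-1)$-diagonal (already controlled), the usual page-by-page induction then shows that $f$ is an isomorphism on $E_\infty^{p,q}$ for $p+q\leq n$, and hence on the abutment in each degree $i\leq n$.

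The main obstacle is thus not a single hard estimate but the bookkeeping at the top of the range: the finiteness supplied by $\mathrm{FP}_{n-1}$ only reaches degree $n-1$, so the terms with $q=n$ must be treated by hand — the invariant term $(0,n)$ through the discreteness of $H_c^n(\hat{N},A)$ as a $\hat{Q}$-module, and the term $(1,n)$ through the observation that it plays no role as a differential target. Verifying that these boundary phenomena dovetail so that the comparison theorem applies in exactly the range $p+q\leq n$ is where the argument must be carried out with care.
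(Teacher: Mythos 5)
Your proposal is correct, and its skeleton is the same as the paper's: both dispose of $n=1$ trivially, both use Proposition 2.4 (with $N$ finitely generated because it is of type $\mathrm{FP}_{n-1}$) to obtain the profinite extension $\hat{N}\rightarrowtail \hat{G}\twoheadrightarrow \hat{Q}$, both compare the discrete and continuous Lyndon--Hochschild--Serre spectral sequences, both use $\mathrm{FP}_{n-1}$ to make $H^q(N,A)$ finite for $q\leq n-1$ so that $Q\in\mathcal{A}_n$ applies on those rows, and both handle the edge term $E_2^{0,n}$ by identifying $\hat{Q}$-invariants with $Q$-invariants of $H_c^n(\hat{N},A)$ (you spell out the density/open-stabilizer argument that the paper compresses into ``readily seen''). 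Where you genuinely diverge is on the diagonal $p+q=n+1$. The paper's comparison lemma (Lemma 2.6) demands injectivity of the $E_2$-map on that \emph{entire} diagonal, so it must control the two terms your finiteness argument does not reach: the term $(1,n)$ is handled by a separate lemma (Lemma 2.7: the comparison map on $H^1$ is monic for any topological $\hat{G}$-module), and the term $(0,n+1)$ by the injectivity clause of Lemma 2.1(i) applied to $N\in\mathcal{A}_n$. You instead observe that neither term is ever the target of a differential leaving the $n$-diagonal, since any such target has first coordinate $p+r\geq 2$; this is a genuine sharpening of the comparison lemma that lets you dispense with Lemma 2.7 altogether, at the cost of running the page-by-page induction yourself (tracking bijectivity on $\{p+q\leq n\}$ and injectivity on $\{p+q=n+1,\ p\geq 2\}$) rather than citing a clean off-the-shelf statement, and at the cost of not obtaining injectivity of $H_c^{n+1}(\hat{G},A)\to H^{n+1}(G,A)$, which the paper's route yields for free (though Lemma 2.1 makes that conclusion unnecessary). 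One small correction: at the corner target $(n+1,0)$ your claim that $f_2$ is an isomorphism is not justified --- $Q\in\mathcal{A}_n$ gives only injectivity of $H_c^{n+1}(\hat{Q},A^{\hat{N}})\to H^{n+1}(Q,A^{N})$, not bijectivity --- but injectivity at differential targets is exactly what your kernel comparison requires, so the argument goes through unchanged.
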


Recall that a group $G$ is of type ${\rm FP}_n$ if $\mathbb Z$ admits a projective resolution as a trivial $G$-module that is finitely generated in the first $n$ dimensions. Finite groups, polycyclic groups and finitely generated free groups are all of type ${\rm FP}_n$ for all $n\geq 0$. Moreover, for each nonnegative integer $n$, the class of groups of type ${\rm FP}_n$ is closed under the formation of extensions.

The key ingredient in the proof of Theorem 2.5 is the following lemma on spectral sequences.

\begin{lemma} Assume $\{E_r^{pq}\}$ and $\{\bar{E}_r^{pq}\}$ are first
quadrant cohomology spectral sequences, and let $\{\phi_r^{pq}\}$ be a map 
from $\{E_r^{pq}\}$ to $\{\bar{E}_r^{pq}\}$. Suppose, further, that
$\phi_2^{pq}: E_2^{pq}\to \bar{E}_2^{pq}$ is bijective for
$0\leq  p+q\leq n$ and injective for $p+q=n+1$. Then the map
$\phi_{\infty}^{pq}: E_{\infty}^{pq}\to
\bar{E}_{\infty}^{pq}$ is bijective for $0\leq p+q\leq n$ and
injective for
$p+q=n+1$.
\end{lemma}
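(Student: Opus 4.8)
The plan is to prove by induction on $r \geq 2$ that the stronger statement holds at every page, namely that $\phi_r^{pq}$ is bijective for $0\leq p+q\leq n$ and injective for $p+q=n+1$, and then to pass to the limit. The base case $r=2$ is precisely the hypothesis. For the inductive step I would exploit the defining feature of a morphism of spectral sequences: each $\phi_r$ commutes with the differentials $d_r$, so that $\phi_{r+1}$ is the map induced on homology by $\phi_r$. Concretely, fixing $(p,q)$ with $p+q=m$ and writing $A=E_r^{p-r,q+r-1}$, $B=E_r^{pq}$ and $C=E_r^{p+r,q-r+1}$, the term $E_{r+1}^{pq}$ is the homology $\ker(d_r\colon B\to C)/\operatorname{im}(d_r\colon A\to B)$ of a three-term complex whose successive entries carry total degrees $m-1$, $m$ and $m+1$. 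The same holds for the barred sequence, and $\phi_r$ furnishes a commutative ladder between the two complexes.

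The heart of the argument is then a short diagram chase carried out in three separate ranges of the total degree $m$. When $m\leq n-1$ all three entries $A,B,C$ lie in degrees $\leq n$, so $\phi_r$ is bijective on each and induces an isomorphism on homology. When $m=n$ one has $\phi_r$ bijective on $A$ and $B$ (degrees $n-1$ and $n$) and merely injective on $C$ (degree $n+1$); I would check directly that surjectivity on $B$ together with injectivity on $C$ yields surjectivity of $\phi_{r+1}^{pq}$, while bijectivity on $A$ and $B$ yields injectivity, so that $\phi_{r+1}^{pq}$ is again bijective. When $m=n+1$ one wants only injectivity of $\phi_{r+1}^{pq}$, and the chase shows this requires nothing more than surjectivity of $\phi_r$ on $A$ (degree $n$) and injectivity of $\phi_r$ on $B$ (degree $n+1$). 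This last point is where the asymmetry of the hypothesis is essential, and it is the step I expect to be the main obstacle to get right: at degree $n+1$ the outgoing target $C$ sits in degree $n+2$, about which nothing is assumed, so one must be sure the verification of injectivity never appeals to $\phi_r$ on $C$. The computation confirms that it does not, which is exactly why ``bijective up to $n$, injective at $n+1$'' is the correct self-propagating condition.

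Finally, because both spectral sequences are first quadrant, for each fixed $(p,q)$ the differentials into and out of the $(p,q)$ spot vanish once $r$ exceeds $\max(p,q+1)$, so $E_\infty^{pq}=E_r^{pq}$ and $\bar{E}_\infty^{pq}=\bar{E}_r^{pq}$ with $\phi_\infty^{pq}=\phi_r^{pq}$ for all sufficiently large $r$. The conclusion for $\phi_\infty^{pq}$ then follows immediately from the stabilized statement for $\phi_r^{pq}$. I would note only in passing that the first-quadrant hypothesis also disposes of the edge cases in the chase, since any entry with negative $p$ or negative $q$ is zero and causes no difficulty.
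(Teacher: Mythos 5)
Your proposal is correct and follows essentially the same argument as the paper: induction on the page number $r$, applying the standard three-term diagram chase (surjectivity on homology from surjectivity in the middle plus injectivity on the outgoing target; injectivity on homology from surjectivity on the incoming source plus injectivity in the middle) to the ladder of complexes computing $E_{r+1}^{pq}$ from $E_r^{pq}$, followed by first-quadrant stabilization to pass to $E_\infty$. Your identification of why the condition ``bijective through degree $n$, injective at $n+1$'' self-propagates---the injectivity check at total degree $n+1$ never touches the outgoing target in degree $n+2$---is exactly the point the paper's proof exploits, though it states it more tersely.
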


\begin{proof} We prove by induction on $r$ that, for $r\geq 2$, the map
$\phi_r^{pq}: E_r^{pq}\to \bar{E}_r^{pq}$ is bijective for
$0\leq p+q\leq n$ and
injective for $p+q=n+1$. Assume $r>2$.  Consider the commutative diagram
$$\begin{CD}
E^{p-r+1, q+r-2}_{r-1}@>>> E^{pq}_{r-1}@>>> E^{p+r-1, q-r+2}_{r-1}\\
@VVV  @VVV @VVV\\
\bar{E}^{p-r+1, q+r-2}_{r-1}@>>> \bar{E}^{pq}_{r-1}@>>> \bar{E}^{p+r-1,
q-r+2}_{r-1}.\\
\end{CD}$$
If $0\leq p+q\leq n$, then the first and second vertical maps in this diagram are bijective, 
whereas the third is injective. This means
that, in this case, the map $E^r_{pq}\to \bar{E}^r_{pq}$ is bijective.
Now consider the case when $p+q=n+1$. Here we have that the second
vertical map is injective and the first bijective. Therefore, the
map $E_r^{pq}\to
\bar{E}_r^{pq}$ is injective.
\end{proof}

In addition, we require the following elementary fact concerning one-dimensional cohomology, whose proof we leave to the reader.

\begin{lemma} If $G$ is a group and $A$ a topological $\hat{G}$-module, then the map $H_c^1(\hat{G},A)\to H^1(G,A)$ induced by $c_G:G\to \hat{G}$ is monic.
\end{lemma}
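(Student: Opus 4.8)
The plan is to unwind the definitions of one-dimensional cohomology in terms of crossed homomorphisms and then exploit the density of the image of $c_G$ in $\hat{G}$. Recall that $H^1_c(\hat{G},A)$ is the group of continuous crossed homomorphisms $f\colon \hat{G}\to A$ (those satisfying $f(xy)=f(x)+x\cdot f(y)$) modulo the principal ones (those of the form $x\mapsto x\cdot a-a$ for a fixed $a\in A$), and that $H^1(G,A)$ admits the analogous description with $G$ in place of $\hat{G}$ and no continuity requirement, where $A$ is regarded as a $G$-module via $c_G$. Under these identifications, the map induced by $c_G$ sends the class of $f$ to the class of $f\circ c_G$.

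To prove monicity, I would take a continuous crossed homomorphism $f\colon \hat{G}\to A$ whose restriction $f\circ c_G$ is principal, say $f(c_G(g))=c_G(g)\cdot a-a$ for all $g\in G$ and some fixed $a\in A$, and show that $f$ is already principal on all of $\hat{G}$. Since the difference of two crossed homomorphisms is again a crossed homomorphism and the principal ones form a subgroup, I may replace $f$ by $f-(x\mapsto x\cdot a-a)$; this is still a continuous crossed homomorphism, and it reduces the problem to the case $f\circ c_G=0$, that is, to the case in which $f$ vanishes on the subgroup $c_G(G)$.

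The heart of the argument is then the density of $c_G(G)$ in $\hat{G}$. Assuming singletons are closed in $A$ --- which holds for any Hausdorff topological module, and in particular for the finite discrete modules arising in our applications --- the preimage $f^{-1}(\{0\})$ is a closed subset of $\hat{G}$ by continuity of $f$. Since it contains the dense subgroup $c_G(G)$, it must equal all of $\hat{G}$. Hence $f$ vanishes identically, so the original cocycle was principal and its class in $H^1_c(\hat{G},A)$ is trivial; this establishes that the induced map is monic. The argument is elementary, and the only point demanding care is this final topological step, where one must invoke both the continuity of the cocycle and the separation of the coefficient module in order to pass from vanishing on the dense subset $c_G(G)$ to vanishing on all of $\hat{G}$.
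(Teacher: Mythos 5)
Your proof is correct; note that the paper itself gives no proof of this lemma (it is explicitly ``left to the reader''), and your argument---subtracting the principal cocycle $x\mapsto x\cdot a-a$ (continuous because the action is continuous) to reduce to a continuous crossed homomorphism vanishing on $c_G(G)$, then using density of $c_G(G)$ in $\hat{G}$ together with closedness of $\{0\}$ in $A$---is exactly the standard argument the author surely intended. You are also right to flag the Hausdorff/closed-singleton hypothesis, which the bare phrase ``topological $\hat{G}$-module'' does not literally guarantee; in the paper's one application of the lemma (the proof of Theorem 2.5, where the coefficient module is $H_c^n(\hat{N},A)$, a discrete module over $\hat{Q}$), the module is discrete, so your argument applies verbatim.
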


Armed with these two lemmas, we proceed with the proof of Theorem 2.5.

\begin{proof} Assume $n\geq 2$. By Proposition 2.4, the sequence $\hat{N}\rightarrowtail \hat{G}\twoheadrightarrow \hat{Q}$ is a profinite group extension. We will employ the
Lyndon-Hochschild-Serre cohomology spectral sequences in ordinary and continuous cohomology, respectively, associated with the
extensions $N\rightarrowtail G\twoheadrightarrow Q$ and $\hat{N}\rightarrowtail \hat{G}\twoheadrightarrow \hat{Q}$.
In view of Lemma 2.6, the conclusion will follow if it holds 
 that the map
 $H_c^p(\hat{Q},H_c^q(\hat{N},A))\to H^p(Q ,H^q(N,A))$
induced by $c_G$ is bijective for $0\leq p+q\leq n$ and
injective for $p+q=n+1$. First we observe that, since $N$ is of type $\mbox{FP}_{n-1}$,  $H^q(N,A)$ is finite for $0\leq q\leq {n-1}$. Consequently, since $Q$ and $N$ are both in $\mathcal{A}_n$, we can immediately conclude that the map $H_c^p(\hat{Q},H_c^q(\hat{N},A))\to H^p(Q ,H^q(N,A))$ has the desired properties provided $q\neq n, n+1$. In the last two cases, however, $H^q(N,A)$ may not be finite, so they need to be examined separately. The homomorphism $H_c^n(\hat{N},A)^{\hat{Q}}\to H^n(N,A)^Q$ is readily seen to be bijective since the map $H_c^n(\hat{N},A)\to H^n(N,A)$ is an isomorphism. Similarly, that the map $H_c^{n+1}(\hat{N},A)\to H^{n+1}(N,A)$ is an injection ensures that the homomorphism $H_c^{n+1}(\hat{N},A)^{\hat{Q}}\to H^{n+1}(N,A)^Q$ is also monic.  All that remains, then, is to verify that the map $H_c^1(\hat{Q},H_c^n(\hat{N},A))\to H^1(Q,H^n(N,A))$ is injective; however, this follows immediately from Lemma 2.7. Therefore, the maps  $H_c^p(\hat{Q},H_c^q(\hat{N},A))\to H^p(Q ,H^q(N,A))$ all have the required properties for $0\leq p+q\leq n+1$.

\end{proof}

The case of Theorem 2.5 when $n=2$ warrants special mention.

\begin{corollary}{\rm (Schneebeli [{\bf 11}, Theorem 1 ])} If $N\rightarrowtail G\twoheadrightarrow Q$ is a group extension in which $Q$ is in $\mathcal{A}_2$ and $N$ is a finitely generated group in $\mathcal{A}_2$, then $G$ belongs to $\mathcal{A}_2$.
\end{corollary}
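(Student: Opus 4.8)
The plan is to obtain this as the immediate specialization of Theorem 2.5 to the case $n=2$. Theorem 2.5 asserts that if $N\rightarrowtail G\twoheadrightarrow Q$ is a group extension with both $N$ and $Q$ in $\mathcal{A}_n$ and with $N$ of type $\mathrm{FP}_{n-1}$, then $G$ lies in $\mathcal{A}_n$. Setting $n=2$, the two conditions that $N$ and $Q$ belong to $\mathcal{A}_2$ are exactly the hypotheses we have been handed, so the only point that requires attention is verifying the finiteness condition $\mathrm{FP}_{n-1}=\mathrm{FP}_1$ on $N$.

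The single observation needed is that a group is of type $\mathrm{FP}_1$ precisely when it is finitely generated. Unwinding the definition recalled just after Theorem 2.5, a group $N$ is of type $\mathrm{FP}_1$ when $\mathbb{Z}$ admits a projective resolution over $\mathbb{Z}N$ that is finitely generated in dimensions $0$ and $1$; this is a standard equivalence with finite generation of $N$, since one may take the free resolution whose degree-zero term is $\mathbb{Z}N$ and whose degree-one term is free on a finite generating set. Thus the hypothesis that $N$ is finitely generated is exactly the statement that $N$ is of type $\mathrm{FP}_1$.

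With this identification in place, all three hypotheses of Theorem 2.5 are satisfied for $n=2$, and we conclude at once that $G\in\mathcal{A}_2$. I do not expect any genuine obstacle here: the entire content of the corollary is the translation of ``finitely generated'' into ``type $\mathrm{FP}_1$,'' after which Theorem 2.5 supplies the result directly. The only care to be taken is making explicit that the general finiteness hypothesis of Theorem 2.5 collapses, in the case $n=2$, to plain finite generation of the normal subgroup.
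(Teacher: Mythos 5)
Your proposal is correct and is exactly the paper's route: the paper presents this corollary as the immediate specialization of Theorem 2.5 to $n=2$, with the implicit identification of ``type $\mathrm{FP}_1$'' with ``finitely generated'' that you spell out explicitly. Making that standard equivalence explicit is the only content the paper leaves unsaid, and your handling of it is accurate.
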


Theorem 2.5 also permits  the following observation.

\begin{corollary} Any group that is virtually poly-(finitely generated free) is in $\mathcal{A}_n$ for all $n\geq 0$.
\end{corollary}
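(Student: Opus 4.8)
The plan is to build the result from the two basic inputs already in hand: that finite groups and finitely generated free groups lie in every $\mathcal{A}_n$ (Proposition 2.3), and that $\mathcal{A}_n$ is closed under extensions whose kernel carries the right finiteness (Theorem 2.5). First I would dispose of the trivial range: by Proposition 2.2 the classes $\mathcal{A}_0$ and $\mathcal{A}_1$ equal the class of all groups, so only $n\geq 2$ requires argument. Throughout I may use that a finitely generated free group is of type $\mathrm{FP}_m$ for every $m$ and that groups of type $\mathrm{FP}_m$ are closed under extensions; in particular every poly-(finitely generated free) group is of type $\mathrm{FP}_m$ for all $m$, hence certainly of type $\mathrm{FP}_{n-1}$.

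Next I would treat the poly-(finitely generated free) case by induction on the length $r$ of a normal series $1=G_0\unlhd G_1\unlhd\cdots\unlhd G_r=G$ with each $G_i/G_{i-1}$ finitely generated free. The base case $r\leq 1$ is Proposition 2.3. For the inductive step I consider the extension $G_{r-1}\rightarrowtail G\twoheadrightarrow G/G_{r-1}$. The quotient $G/G_{r-1}$ is finitely generated free and so lies in $\mathcal{A}_n$, while the kernel $G_{r-1}$ is poly-(finitely generated free) of shorter length, hence in $\mathcal{A}_n$ by the inductive hypothesis and of type $\mathrm{FP}_{n-1}$ by the remark above. Theorem 2.5 then yields $G\in\mathcal{A}_n$.

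Finally I would reduce the virtual case to the one just settled. Suppose $G$ has a finite-index subgroup $H$ that is poly-(finitely generated free), with normal series $1=H_0\unlhd H_1\unlhd\cdots\unlhd H_s=H$. Replacing $H$ by its normal core $N=\bigcap_{g\in G}gHg^{-1}$, a finite intersection of finite-index subgroups, I obtain $N\unlhd_f G$. The key point to verify is that $N$ is again poly-(finitely generated free): the subgroups $N\cap H_i$ form a normal series of $N$ whose $i$-th section embeds, via the second isomorphism theorem, as the subgroup $(N\cap H_i)H_{i-1}/H_{i-1}$ of $H_i/H_{i-1}$, and this subgroup is of finite index because $N\cap H_i$ is of finite index in $H_i$. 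A finite-index subgroup of a finitely generated free group is itself finitely generated and free by Nielsen--Schreier, so every section is finitely generated free. Hence $N\in\mathcal{A}_n$ by the previous paragraph and $N$ is of type $\mathrm{FP}_{n-1}$, while the finite quotient $G/N$ lies in $\mathcal{A}_n$ by Proposition 2.3; a last application of Theorem 2.5 to $N\rightarrowtail G\twoheadrightarrow G/N$ gives $G\in\mathcal{A}_n$.

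The main obstacle is precisely this stability of the poly-(finitely generated free) property under passage to finite-index subgroups, since everything else is a direct assembly of Propositions 2.2 and 2.3 with Theorem 2.5. Within that verification the one point demanding care is that each section is not merely free but \emph{finitely generated}, which is exactly what the finiteness of its index in $H_i/H_{i-1}$ secures.
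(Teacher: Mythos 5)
Your proof is correct and is essentially the argument the paper intends: it states this corollary without proof as a direct consequence of Theorem 2.5 (iterated along the poly-series, with Proposition 2.3 and the FP$_n$ remarks supplying the base cases and finiteness hypotheses), which is exactly your assembly. Your explicit treatment of the virtual case via the normal core and Nielsen--Schreier fills in a detail the paper leaves implicit, and it is carried out correctly.
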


In particular, finitely generated virtually free groups and virtually polycyclic groups lie in $\mathcal{A}_n$ for all $n\geq 0$. Not all finitely generated solvable groups, however, lie in $\mathcal{A}_2$, as the following example illustrates.

\begin{example}{\rm Let $U$ be the group of all upper triangular 3x3 matrices over the dyadic rationals with diagonal  $(1, 2^k,1)$, where $k\in \mathbb Z$. For $1\leq i, j\leq 3$, let $E_{ij}$ be the 3x3 matrix with a $1$ in the $(i,j)$ position and zeros everywhere else. The group $U$ is a solvable group generated by three elements: the diagonal matrix with diagonal $(1,2,1)$, the matrix $1+E_{12}$ and the matrix $1+E_{23}$.  Let $\phi:U\to U$ be the automorphism
$$ \begin{pmatrix}1 & a & b\\
0 & 2^k & c \\
0 & 0 & 1 \end{pmatrix}\mapsto \begin{pmatrix}1 & a & 2b \\
0 & 2^k & 2c \\
0 & 0 & 1 \end{pmatrix}.$$ 
Let $A$ be the matrix $1+E_{13}$ in $U$. Then $\phi$ induces an isomorphism $U/\langle A\rangle\to U/\langle A^2\rangle$. Hence, setting $G= U/\langle A^2\rangle$, we have a group extension
$$\mathbb Z/2\rightarrowtail G\twoheadrightarrow G.$$
If $G$ were in $\mathcal{A}_2$, this would yield a profinite group extension
$$\mathbb Z/2\rightarrowtail \hat{G}\twoheadrightarrow \hat{G}.$$
However, this is an impossibility since every continuous surjective endomorphism of a topologically finitely generated profinite group is an automorphism (see [{\bf 14}, Proposition 4.2.2]). Therefore, $G$ is not in $\mathcal{A}_2$.

The group $G$ was, incidentally, the first example of a finitely generated solvable group that fails to be hopfian, due to P. Hall \cite{hall}.}

\end{example}

The residually finite groups in $\mathcal{A}_2$ have a particularly striking property, as described in the following theorem.

\begin{theorem}
The following statements are equivalent for a group $G$.
\vspace{5pt}

\noindent (i) The group $G$ is residually finite and lies in the class $\mathcal{A}_2$.
\vspace{5pt}

\noindent (ii) For any group extension $N\rightarrowtail E\twoheadrightarrow G$ with $N$ finitely generated and residually finite, $E$ is residually finite.
\vspace{5pt}

\noindent (iii) For any group extension $F\rightarrowtail E\twoheadrightarrow G$ with $F$ finite,  $E$ is residually finite.
\end{theorem}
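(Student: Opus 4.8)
The plan is to establish the three implications (i)$\Rightarrow$(ii)$\Rightarrow$(iii)$\Rightarrow$(i). The middle implication is the easiest: since every finite group is both finitely generated and residually finite, statement (iii) is merely the instance of (ii) obtained by taking $N=F$, so (ii)$\Rightarrow$(iii) requires no separate argument.

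For (i)$\Rightarrow$(ii), I would take an extension $N\stackrel{\iota}{\rightarrowtail} E\stackrel{\epsilon}{\twoheadrightarrow} G$ with $N$ finitely generated and residually finite, fix $1\neq x\in E$, and produce an $L\leq_f E$ with $x\notin L$. If $\epsilon(x)\neq 1$, residual finiteness of $G$ supplies $M\unlhd_f G$ with $\epsilon(x)\notin M$, and $\epsilon^{-1}(M)\leq_f E$ does the job. If instead $x\in\mathrm{Im}\,\iota$, say $x=\iota(n)$ with $n\neq 1$, I would use residual finiteness of $N$ to choose $K\leq_f N$ with $n\notin K$; since $G\in\mathcal A_2$ and $N$ is finitely generated, Proposition 2.4 (together with the discussion preceding it) gives $\mathrm{Im}\,\iota\unlhd_t E$, so by the characterization of topological embedding recorded after the definition there is $L\leq_f E$ with $L\cap\mathrm{Im}\,\iota\leq\iota(K)$, and this $L$ excludes $x$.

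The substance of the theorem lies in (iii)$\Rightarrow$(i). Residual finiteness of $G$ is immediate, being the case $F=1$ of (iii). To show $G\in\mathcal A_2$, I would verify the criterion of Proposition 2.4: for an arbitrary extension $N\stackrel{\iota}{\rightarrowtail} E\twoheadrightarrow G$ with $N$ finitely generated, the map $\hat\iota$ is injective, equivalently $\mathrm{Im}\,\iota\unlhd_t E$. Identifying $N$ with $\mathrm{Im}\,\iota$, it suffices to show that each $K\leq_f N$ contains $L\cap N$ for some $L\leq_f E$. The key maneuver is to replace $K$ by a \emph{characteristic} finite-index subgroup $K_0\leq K$ of $N$, which exists because a finitely generated group has only finitely many subgroups of each finite index (so the intersection of all subgroups of index at most $[N:K]$ is characteristic and of finite index); being characteristic in the normal subgroup $N$, $K_0$ is normal in $E$. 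Then $N/K_0\rightarrowtail E/K_0\twoheadrightarrow G$ is an extension of $G$ by the \emph{finite} group $N/K_0$, so (iii) renders $E/K_0$ residually finite.

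Finally I would exploit this residual finiteness to separate $N/K_0$ from $K/K_0$: the set $(N/K_0)\setminus(K/K_0)$ is finite, so for each of its elements I can pick a finite-index subgroup of $E/K_0$ avoiding it and intersect these finitely many subgroups to obtain $\bar L\leq_f E/K_0$ with $\bar L\cap(N/K_0)\leq K/K_0$. Pulling $\bar L$ back to its preimage $L\leq_f E$ yields $L\cap N\leq K$, as required. The main obstacle is precisely this last direction---turning the purely finite-kernel hypothesis of (iii) into the topological-embedding condition that governs membership in $\mathcal A_2$---and the reduction through the characteristic subgroup $K_0$ is what makes the finite-kernel hypothesis applicable to an arbitrary finitely generated $N$.
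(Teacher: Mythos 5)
Your proof is correct and follows essentially the same route as the paper's: the crucial direction (iii)$\Rightarrow$(i) uses the identical device of replacing $K\leq_f N$ by a characteristic (hence $E$-normal) finite-index subgroup $K_0$, applying (iii) to the finite-kernel extension $N/K_0\rightarrowtail E/K_0\twoheadrightarrow G$, and then separating the finite subgroup $N/K_0$ inside the residually finite group $E/K_0$ to conclude $N\unlhd_t E$ and invoke Proposition 2.4. The only cosmetic difference is in (i)$\Rightarrow$(ii), where you chase elements directly rather than reading off the injectivity of $c_E$ from the paper's commutative diagram of profinite completions with exact rows, but the underlying input and logic are the same.
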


\begin{proof}  ((i)$\Longrightarrow$(ii))  Let $N\rightarrowtail E\twoheadrightarrow G$ be a group extension with $N$ finitely generated and residually finite.
Then we have a commutative diagram
\begin{displaymath} \begin{CD}
1 @>>> N @>>> E @>>> G @>>> 1\\
&& @VVc_NV @VVc_EV @VVc_GV &&\\
1 @>>> \hat{N} @>>> \hat{E} @>>> \hat{G}@>>> 1
\end{CD} \end{displaymath}
with exact rows. Since the maps $c_N$ and $c_G$ are injective, $c_E$ is also injective.
\vspace{5pt}

((ii)$\Longrightarrow$(iii)) is trivial.
\vspace{5pt}

((iii)$\Longrightarrow$(i)) That $G$ is residually finite is immediate. In order to show that $G$ lies in $\mathcal{A}_2$, let $N\rightarrowtail E\twoheadrightarrow G$ be a group extension such that $N$ is finitely generated. We will establish that $\hat{N}\rightarrowtail \hat{E}\twoheadrightarrow \hat{G}$ is exact by demonstrating that $N\unlhd_t G$. To show this, let $H\leq _f N$. Then, since $N$ is finitely generated,  $H$ contains a subgroup $M\unlhd G$ such that $[N:M]<\infty$. It follows, then, from (iii) that $E/M$ is residually finite. This implies that there exists $K\leq_f G$ containing $M$ such that $K/M\cap N/M$ is the trivial subgroup of $E/M$. Thus $K\cap N=M\leq H$. Therefore, $N\unlhd_t G$. 
 \end{proof}

As stated in the introduction, we will refer to groups that satisfy the three equivalent conditions of Theorem 2.10 as {\it highly residually finite}. In view of their remarkable properties, these are among the most interesting groups in the classes $\mathcal{A}_n$; hence they are accorded a considerable amount of attention in the sequel. For the moment, however, we remain content merely to observe that finite groups, polycyclic groups and free groups are all highly residually finite; moreover, as follows from the corollary below, the class of finitely generated highly residually finite groups is closed under the formation of extensions.

\begin{corollary} If $N\rightarrowtail G\twoheadrightarrow Q$ is a group extension such that $N$ and $Q$ are both highly residually finite and $N$ is finitely generated, then $G$ is also highly residually finite .
\end{corollary}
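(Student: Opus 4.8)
The plan is to deduce the corollary immediately from the two principal results already established, namely Corollary 2.8 (Schneebeli's theorem) and Theorem 2.10, by unwinding the definition of highly residual finiteness into its two constituent assertions. Recall that a group is highly residually finite precisely when it is both residually finite and a member of $\mathcal{A}_2$. Accordingly, the hypotheses furnish us with the facts that $N$ and $Q$ are each residually finite, that $N$ and $Q$ each lie in $\mathcal{A}_2$, and that $N$ is finitely generated. To prove that $G$ is highly residually finite, I would verify these two properties for $G$ separately.

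First I would show that $G$ lies in $\mathcal{A}_2$. Since $N$ is a finitely generated member of $\mathcal{A}_2$ and $Q$ belongs to $\mathcal{A}_2$, this is exactly the content of Corollary 2.8 applied to the extension $N\rightarrowtail G\twoheadrightarrow Q$, and the conclusion $G\in \mathcal{A}_2$ follows at once.

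Next I would establish that $G$ is residually finite. Because $Q$ is highly residually finite, it satisfies condition (ii) of Theorem 2.10: every extension of a finitely generated residually finite group by $Q$ is residually finite. The given extension $N\rightarrowtail G\twoheadrightarrow Q$ is of precisely this form, since $N$ is finitely generated and residually finite. Hence Theorem 2.10, applied to $Q$, yields that $G$ is residually finite.

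Combining the two steps, $G$ is both residually finite and in $\mathcal{A}_2$, and is therefore highly residually finite. The argument presents no genuine obstacle; it is a clean synthesis of the preceding results, and the only point requiring care is to apply each of them to the correct ingredient of the hypothesis---Corollary 2.8 to secure membership in $\mathcal{A}_2$, and the residual-finiteness characterization of Theorem 2.10 to the quotient $Q$ rather than to $N$. I would also note that no finite-generation assumption on $Q$ or $G$ is needed for this particular conclusion.
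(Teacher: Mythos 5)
Your proposal is correct and matches the paper's own proof exactly: the paper likewise obtains $G\in\mathcal{A}_2$ from Corollary 2.8 and residual finiteness of $G$ from Theorem 2.10 (condition (ii) applied to $Q$). Your write-up simply spells out the details that the paper leaves implicit, including the correct observation that no finite-generation hypothesis on $Q$ or $G$ is needed.
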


\begin{proof} By Theorem 2.10, $G$ is residually finite, and, by Corollary 2.8, $G$ is in $\mathcal{A}_2$.
\end{proof}

It needs to be emphasized that not all residually finite groups are highly residually finite. For example, $\mbox{SL}_3(\mathbb Z)$ is residually finite, but, in view of [{\bf 5}, Theorem I(ii)], it is not highly residually finite. Furthermore, there are groups in $\mathcal{A}_2$ that are not residually finite; the following example is one such group.

\begin{example}{\rm Let $G$ be G. Higman's \cite{higman} example of an infinite group with four generators and four relators without any proper subgroups of finite index. Since the deficiency of Higman's presentation is $0$ and $G_{ab}=1$, we have that $H_2G=0$. This means, by the universal coefficient theorem, that $H^2(G,A)=0$ for any finite $G$-module $A$. Therefore, since $\hat{G}=1$, $G$ lies in $\mathcal{A}_2$. }
\end{example}

\section{Mayer-Vietoris sequences and the classes $\mathcal{A}_n$}
\vspace{20pt}
 
 In this section we prove that certain free products with amalgamation and HNN extensions formed from groups in $\mathcal{A}_n$ yield groups that also lie in $\mathcal{A}_n$. The main tools that we employ 
are the Mayer-Vietoris sequences for free products with amalgamation and for HNN extensions, in both their ordinary and profinite incarnations. 

We begin our discussion by recalling some facts about profinite free products with amalgamation from \cite{profinite}. If $\Gamma_1$ and $\Gamma_2$ are profinite groups with a common closed subgroup $\Delta$, then we can always form the profinite free product of $\Gamma_1$ and $\Gamma_2$ with amalgamated subgroup $\Delta$; this is the pushout of $\Gamma_1$ and $\Gamma_2$ over $\Delta$ in the category of profinite groups. If $\Gamma_1$, $\Gamma_2$ and $\Delta$ are all embedded in this pushout, which is by no means always the case, then the latter is referred to as a {\it proper} profinite free product with amalgamation. Associated to proper profinite free products with amalgamation are Mayer-Vietoris sequences that relate the cohomologies of the various groups to one another.

 We are interested in the special situation when $\Gamma_1=\hat{G_1}$, $\Gamma_2=\hat{G}_2$ and $\Delta=\hat{H}$, where $G_1$ and $G_2$ are discrete groups with a shared topologically embedded subgroup $H$. In this case, the profinite completion of $G=G_1\ast _H G_2$ is the profinite free product of $\hat{G_1}$ and $\hat{G_2}$ with amalgamated subgroup $\hat{H}$. Moreover, this profinite free product with amalgam is proper if and only if both $G_1$ and $G_2$ are topologically embedded in $G$. In this case, we have a Mayer-Vietoris sequence for $\hat{G}$ relating the cohomologies of $\hat{G_1}$, $\hat{G_2}$ and $\hat{H}$. This sequence is described in the following theorem, which also illuminates the connection to the discrete Mayer-Vietoris sequence for $G$.
 
\begin{theorem} Let $G_1$ and $G_2$ be groups with a common subgroup $H$ that is topologically embedded in both groups, and let $G=G_1\ast_H G_2$.  Assume, further, that $G_1$ and $G_2$ are both topologically embedded in $G$. Then, for each discrete $\hat{G}$-module $A$ and positive integer $i$,  we have a commutative diagram
\begin{equation} \begin{CD}
H_c^{i-1}(\hat{H},A) @>>> H_c^i(\hat{G},A) @>>> H_c^i(\hat{G_1},A)\oplus H_c^i(\hat{G_2},A)  @>>> H_c^i(\hat{H},A)\\
@VVV @VVV @VVV @VVV \\
H^{i-1}(H,A) @>>> H^i(G,A) @>>> H^i(G_1,A)\oplus H^i(G_2,A)  @>>> H^i(H,A),
\end{CD} \end{equation}
in which the rows are exact and the vertical maps are induced by the completion maps for $G_1$, $G_2$, $H$ and $G$.
\end{theorem}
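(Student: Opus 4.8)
The plan is to exhibit both rows as Mayer--Vietoris long exact sequences arising from the fundamental short exact sequence attached to the action of an amalgam on its Bass--Serre tree, and then to produce the vertical comparison maps, together with commutativity, by a naturality argument for the completion maps.

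First I would recall the discrete Mayer--Vietoris sequence for $G=G_1\ast_H G_2$. It is derived from the short exact sequence of $\mathbb{Z}G$-modules
$$0\to \mathbb{Z}[G/H]\to \mathbb{Z}[G/G_1]\oplus \mathbb{Z}[G/G_2]\to \mathbb{Z}\to 0,$$
in which the first map sends $gH$ to $(gG_1,-gG_2)$ and the second sends each $gG_i$ to $1$; exactness reflects the contractibility of the tree. Since every term is free as an abelian group, applying $\mathrm{Hom}_{\mathbb{Z}}(-,A)$ yields a short exact sequence of $G$-modules whose middle and right terms are the coinduced modules $\mathrm{Coind}_{G_i}^G A$ and $\mathrm{Coind}_H^G A$. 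Feeding this into the long exact sequence for $H^*(G,-)$ and invoking Shapiro's lemma, $H^*(G,\mathrm{Coind}_{G_i}^G A)\cong H^*(G_i,A)$ and $H^*(G,\mathrm{Coind}_H^G A)\cong H^*(H,A)$, produces exactness of the bottom row, the horizontal maps being restriction, difference of restrictions, and the connecting homomorphism.

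For the top row I would invoke the discussion preceding the theorem: the hypotheses $H\leq_t G_1$, $H\leq_t G_2$ and $G_1,G_2\leq_t G$ guarantee that $\hat{G}$ is the \emph{proper} profinite amalgam $\hat{G_1}\amalg_{\hat{H}}\hat{G_2}$. The profinite Mayer--Vietoris sequence for proper amalgams is the continuous analogue of the construction above, obtained from the completed sequence
$$0\to [[\hat{\mathbb{Z}}(\hat{G}/\hat{H})]]\to [[\hat{\mathbb{Z}}(\hat{G}/\hat{G_1})]]\oplus [[\hat{\mathbb{Z}}(\hat{G}/\hat{G_2})]]\to \hat{\mathbb{Z}}\to 0$$
together with the profinite Shapiro lemma; this is exactly the content cited from \cite{profinite}, and properness is precisely what renders the displayed module sequence exact, hence what makes the top row exact.

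Finally, to supply the vertical maps and verify commutativity, I would observe that the completion maps $c_{G_1},c_{G_2},c_H,c_G$ induce the comparison homomorphisms appearing as the vertical arrows. The squares involving restrictions and differences of restrictions commute immediately by the functoriality of profinite completion, which for any subgroup inclusion gives $c_G\circ(G_i\hookrightarrow G)=(\hat{G_i}\to\hat{G})\circ c_{G_i}$. The genuinely delicate squares are the ones containing the connecting homomorphisms $H_c^{i-1}(\hat{H},A)\to H_c^i(\hat{G},A)$ and $H^{i-1}(H,A)\to H^i(G,A)$, and I expect these to be the main obstacle. To treat them I would note that $c_G$ carries the $G$-set $G/H$ into the $\hat{G}$-set $\hat{G}/\hat{H}$ (and likewise $G/G_i$ into $\hat{G}/\hat{G_i}$), so the completion maps induce a morphism from the discrete fundamental module sequence to the $G$-module restriction of the profinite one. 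Dualizing into $A$ and comparing the two cohomology theories realizes each connecting homomorphism as the boundary map of the long exact sequence attached to these compatible short exact sequences, with the Shapiro identifications respected on both sides. Naturality of the connecting homomorphism with respect to this morphism of short exact sequences then forces the remaining squares to commute, completing the construction of the diagram (3.1).
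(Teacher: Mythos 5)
Your proposal is sound, but note that the paper offers no proof of Theorem 3.1 at all: the theorem is stated as a package of facts imported from Ribes and Zalesskii \cite{profinite} --- that $\hat{G}$ is the profinite amalgam of $\hat{G_1}$ and $\hat{G_2}$ over $\hat{H}$, that the hypotheses $H\leq_t G_i$ and $G_i\leq_t G$ make this amalgam proper, and that proper profinite amalgams carry a Mayer--Vietoris sequence --- with the commutativity of diagram (3.1) left as an implicit naturality statement. Your sketch reconstructs the argument behind those citations in the standard way: the bottom row by dualizing the Bass--Serre coset sequence and applying Shapiro's lemma; the top row from the completed permutation-module sequence, whose exactness amounts to the standard graph of the proper amalgam being a profinite tree; and the vertical maps by functoriality. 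Two cautions. First, the exactness of the completed sequence genuinely depends on properness, i.e., on the topological-embedding hypotheses; you rightly take this from the paper's preceding discussion rather than reprove it, and it is the one ingredient that cannot be obtained by formally ``completing'' the discrete sequence, since profinite completion is not an exact functor. Second, the phrase ``with the Shapiro identifications respected on both sides'' is where the only real work lies: the comparison map on coinduced coefficients is pullback of continuous functions along $c_G$ (using $c_G(H)\subseteq \hat{H}$ and $c_G(G_i)\subseteq \hat{G_i}$), and one must check at the cochain level both that the square formed by the profinite and discrete Shapiro isomorphisms and the two comparison maps commutes, and that the connecting homomorphisms are compatible with this simultaneous change of group and coefficients. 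These verifications are routine, but they constitute the actual content of the commutativity assertion, so a complete write-up should carry them out; as an outline, your approach is correct and is exactly the argument that the paper's citations encode.
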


Theorem 3.1 allows us to deduce the following set of criteria for determining if a free product with amalgam is in $\mathcal{A}_n$.

\begin{corollary} Let $G_1$ and $G_2$ be groups with a common subgroup $H$ that is topologically embedded in both groups. Assume $G_1$ and $G_2$ are each topologically embedded in $G=G_1\ast_H~G_2$. If $G_1$ and $G_2$ are both in $\mathcal{A}_n$ and $H$ is in $\mathcal{A}_{n-1}$, then $G$ belongs to $\mathcal{A}_n$.
\end{corollary}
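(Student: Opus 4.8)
The plan is to apply Lemma 2.6 to the Mayer-Vietoris sequences of Theorem 3.1, which already provide a map of exact sequences. The natural first move is to use Serre's Lemma 2.1, which tells us that showing $G\in\mathcal{A}_n$ is equivalent to showing that the vertical map $H^i_c(\hat G,A)\to H^i(G,A)$ is bijective for $0\le i\le n$ and injective for $i=n+1$, for every finite discrete $\hat G$-module $A$. So I would fix such an $A$ and work throughout with the diagram (3.1) of Theorem 3.1, whose rows are exact and whose two outer and two inner vertical maps are the completion-induced maps for $H$, $G_1$, $G_2$ and $G$.

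The key step is to record what the hypotheses give about the three vertical maps other than the one for $G$. Since $G_1,G_2\in\mathcal{A}_n$, Lemma 2.1(i) gives that $H^i_c(\hat{G_j},A)\to H^i(G_j,A)$ is bijective for $0\le i\le n$ and injective for $i=n+1$ (for $j=1,2$), hence the same holds for the direct sum $H^i_c(\hat{G_1},A)\oplus H^i_c(\hat{G_2},A)\to H^i(G_1,A)\oplus H^i(G_2,A)$. Since $H\in\mathcal{A}_{n-1}$, Lemma 2.1(i) applied with $n-1$ in place of $n$ gives that $H^i_c(\hat H,A)\to H^i(H,A)$ is bijective for $0\le i\le n-1$ and injective for $i=n$. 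The point of the $\mathcal{A}_{n-1}$ hypothesis on $H$ (rather than $\mathcal{A}_n$) is precisely that the $H$-terms in (3.1) are shifted down by one: the term $H^{i-1}_c(\hat H,A)$ contributes in homological degree $i$, so to control $H^i_c(\hat G,A)$ for $i\le n$ we only need the $H$-map to behave well up through degree $n-1$, which is exactly what $\mathcal{A}_{n-1}$ supplies.

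With these facts in hand I would run a five-lemma / diagram-chase argument along (3.1). Concretely, to prove surjectivity of $H^i_c(\hat G,A)\to H^i(G,A)$ for $i\le n$ one chases an element of $H^i(G,A)$: its image in $H^i(G_1,A)\oplus H^i(G_2,A)$ lifts (bijectivity of the $G_j$-maps in this range), and one corrects by an element coming from $H^{i-1}_c(\hat H,A)$ using bijectivity of the $H$-map in degree $i-1\le n-1$; exactness of the rows makes the correction land correctly. Injectivity for $i\le n+1$ is the dual chase, using injectivity of the $H$-map in degree $i-1$ (which holds for $i-1\le n$, i.e. $i\le n+1$) together with the bijectivity/injectivity of the $G_j$-maps. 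The cleanest packaging, and the one I would actually write, is to note that the outer, inner and direct-sum vertical maps all satisfy the hypothesis ``bijective for total degree $\le n$, injective for total degree $=n+1$'' appropriate to their position, and then invoke a version of the five lemma on the two exact rows to transfer the same property to the $\hat G$-map; the degree bookkeeping for the shifted $H$-terms is exactly arranged so that every instance needed falls within the guaranteed ranges.

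The main obstacle is not conceptual but bookkeeping: one must check carefully that at each value of $i\le n+1$ the three maps feeding the five-lemma argument for $H^i_c(\hat G,A)\to H^i(G,A)$ have good enough behavior in the specific degrees $i-1$, $i$ that appear, and in particular that the borderline cases $i=n$ and $i=n+1$ (where the $H$-map is only injective, and the $G_j$-maps drop from bijective to injective) still yield the claimed injectivity. Verifying that these endpoint cases close up is the delicate part; everything in the interior range is immediate from bijectivity.
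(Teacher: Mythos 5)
Your proposal is correct and follows essentially the same route as the paper: a five-lemma style diagram chase on the commutative Mayer-Vietoris ladder of Theorem 3.1, with exactly the degree bookkeeping you describe (the shift by one in the $H$-terms is why $\mathcal{A}_{n-1}$ suffices for $H$). The only difference is that the paper targets criterion (ii) of Lemma 2.1 rather than criterion (i), so it needs only the epi half of the chase on the displayed four-term diagram (first and third vertical maps bijective, fourth injective, for $0\leq i\leq n$, forcing the second to be surjective) and never has to carry out your injectivity chase at $i=n+1$, which also requires extending the ladder one term to the left.
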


\begin{proof} Let $A$ be a finite discrete $\hat{G}$-module. For $0\leq i\leq n$ the first and third maps in diagram (3.1) are bijections, whereas the fourth is an injection. Therefore, the second map is surjective, placing $G$ in $\mathcal{A}_n$.
\end{proof}

We now proceed to ascertain when the two factors in a free product with amalgam are topologically embedded.

\begin{proposition} Let $G_1$ and $G_2$ be groups with a common subgroup $H$. Assume that, for each pair $\{N_1, N_2\}$ with $N_i\unlhd_f G_i$, there exists a pair $\{P_1, P_2\}$ such that $P_i\unlhd_f G_i$, $P_i\leq N_i$ and $P_1\cap H=P_2\cap H$. Then $G_1$ and $G_2$ are topologically embedded in $G_1\ast_H G_2$. 
\end{proposition}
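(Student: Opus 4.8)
The plan is to verify the second of the three equivalent conditions for topological embedding listed after the definition of $\leq_t$: it suffices to show that for every $K\leq_f G_1$ there is some $L\leq_f G$ with $L\cap G_1\leq K$, and symmetrically for $G_2$. Since the hypothesis is symmetric in $G_1$ and $G_2$, I would treat only $G_1$ and obtain the statement for $G_2$ by the identical argument. The governing idea is to build, from the given data, a surjection of $G$ onto a \emph{residually finite} group in which the image of $G_1$ is finite, and then to separate that finite image by a finite-index normal subgroup.

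First I would fix $K\leq_f G_1$ and replace it by the normal core $N_1$ of $K$ in $G_1$, so that $N_1\unlhd_f G_1$ and $N_1\leq K$. Applying the hypothesis to the pair $\{N_1,G_2\}$ produces $P_1,P_2$ with $P_i\unlhd_f G_i$, $P_1\leq N_1$, and $P_1\cap H=P_2\cap H=:D$. Since $P_i\unlhd_f G_i$, the subgroup $D$ is normal and of finite index in $H$, and because $D=P_i\cap H$ is exactly the kernel of the composite $H\to G_i/P_i$, the quotient $H/D$ embeds into each finite group $G_i/P_i$. Identifying these two copies of $H/D$, I would then form the (discrete) amalgam
$$\bar{G}:=(G_1/P_1)\ast_{H/D}(G_2/P_2).$$
The two quotient maps $G_i\to G_i/P_i$ agree on $H$ (both factoring through $H/D$), so by the universal property of $G=G_1\ast_H G_2$ they induce a surjection $\pi\colon G\twoheadrightarrow\bar{G}$.

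The crucial structural input is that the factors of an amalgamated free product embed in it (the normal-form theorem), so $G_1/P_1$ sits inside $\bar{G}$ and the composite $G_1\hookrightarrow G\xrightarrow{\pi}\bar{G}$ is just $G_1\to G_1/P_1\hookrightarrow\bar{G}$, whence $\ker\pi\cap G_1=P_1$. Now $\bar{G}$ is an amalgam of two finite groups over a common finite subgroup, hence virtually free, and therefore residually finite (alternatively one may invoke the classical fact that a generalized free product of residually finite groups amalgamating a finite subgroup is residually finite). Because $G_1/P_1$ is a finite subgroup of the residually finite group $\bar{G}$, I can intersect finitely many finite-index normal subgroups to obtain $\bar{L}\unlhd_f\bar{G}$ with $\bar{L}\cap(G_1/P_1)=1$. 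Setting $L:=\pi^{-1}(\bar{L})$ gives $L\unlhd_f G$ (as $\pi$ is surjective and $\bar L$ has finite index), and
$$L\cap G_1=\{g\in G_1:\pi(g)\in\bar{L}\cap(G_1/P_1)=1\}=P_1\leq N_1\leq K.$$
This establishes the required condition, so $G_1\leq_t G$, and likewise $G_2\leq_t G$.

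I expect the main obstacle to be the two standard but essential facts about amalgams that drive the argument: the embedding of the factors $G_i/P_i$ into $\bar G$ (needed to pin down $\ker\pi\cap G_1$ exactly as $P_1$, not merely to contain it), and the residual finiteness of $\bar G$. Everything else—passing to the normal core, the compatibility $D=P_1\cap H=P_2\cap H$ that makes $\pi$ well defined, and the pullback of a separating subgroup—is routine bookkeeping. The role of the hypothesis is precisely to guarantee that the refinements $P_1,P_2$ can be chosen to agree on $H$, which is what permits the two finite quotients to be glued into the single residually finite group $\bar G$.
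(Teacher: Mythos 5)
Your proof is correct and follows essentially the same route as the paper: both form the amalgam $\bar{G}=(G_1/P_1)\ast_{H/D}(G_2/P_2)$ of the finite quotients over the common image of $H$, map $G$ onto it, and pull back a finite-index subgroup meeting the image of $G_i$ trivially. The only cosmetic difference is that the paper extracts that subgroup as the preimage of a free subgroup of finite index in the virtually free group $\bar{G}$ (which misses all finite subgroups automatically, handling both factors at once), whereas you invoke residual finiteness of $\bar{G}$ to separate the finite subgroup $G_1/P_1$ one factor at a time.
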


\begin{proof} Let $G=G_1\ast_H G_2$. Assume $N_1\unlhd_f G_1$ and $N_2\unlhd_f G_2$. Then there exists a pair $\{P_1, P_2\}$ such that $P_i\unlhd_f G_i$, $P_i\leq N_i$ and $P_1\cap H=P_2\cap H$. Since  
$P_1\cap H=P_2\cap H$, $P_1H/P_1\cong P_2H/P_2$. We can then identify these two groups via this isomorphism and form the free product with amalgamation
$$\bar{G} = G_1/ P_1\ast_{P_1H/P_1} G_2/P_2.$$
As a free product of two finite groups with amalgamation, the group $\bar{G}$ is virtually free; moreover, there is a homomorphism $\theta: G\to \bar{G}$ that maps $G_1$ and $G_2$ canonically onto $G_1/ P_1$ and $G_2/ P_2$, respectively. Let $U$ be the inverse image with respect to $\theta$ of a free subgroup of finite index in $\bar{G}$. Then $U\leq_f G$ and $U\cap G_i\leq P_i\leq N_i$. Therefore, $G_i\leq_t G$ for $i=1, 2$.
\end{proof}

Corollary 3.2 and Proposition 3.3 provide us with a way to prove that a free product with amalgam is in the class $\mathcal{A}_n$. The only difficulty is that it is not easy to recognize when the conditions stipulated in Proposition 3.3 might be satisfied. Nevertheless, we will discern two important cases where these conditions are fulfilled. The first involves a normal amalgam and is treated in the following theorem, inspired by [{\bf 3}, Proposition 6.1].

\begin{theorem} Assume $G_1$ and $G_2$ are groups in $\mathcal{A}_n$  with a shared finitely generated normal subgroup $N$ in $\mathcal{A}_{n-1}$. If  $G_1/N$ and $G_2/N$ both belong to $\mathcal{A}_2$, then $G_1\ast_N G_2$ is in $\mathcal{A}_n$.
 \end{theorem}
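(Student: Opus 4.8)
The plan is to derive the theorem from Corollary 3.2. Its hypotheses demand that $N$ be topologically embedded in each $G_i$ and that both $G_1$ and $G_2$ be topologically embedded in $G=G_1\ast_N G_2$; the remaining conditions, $G_1,G_2\in\mathcal{A}_n$ and $N\in\mathcal{A}_{n-1}$, are provided by hypothesis. The embedding $N\leq_t G_i$ is the easy part: since $N$ is finitely generated and $G_i/N\in\mathcal{A}_2$, Proposition 2.4 applied to $N\rightarrowtail G_i\twoheadrightarrow G_i/N$ shows that $\hat{N}\to\hat{G_i}$ is injective, which is equivalent to $N\leq_t G_i$.

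For the harder embedding $G_i\leq_t G$, I would verify the hypothesis of Proposition 3.3 with amalgamated subgroup $N$. So I fix $N_i\unlhd_f G_i$ and seek $P_i\unlhd_f G_i$ with $P_i\leq N_i$ and $P_1\cap N=P_2\cap N$. Because $N$ is finitely generated, the intersection $M$ of all its subgroups of index at most $m$ is characteristic in $N$ and of finite index, and for $m$ large enough we may assume $M\leq N_1\cap N_2\cap N$; since $M$ is characteristic in $N$ and $N\unlhd G_i$, we have $M\unlhd G_1$ and $M\unlhd G_2$ simultaneously. The idea is to arrange $P_i\cap N=M$ in each factor separately: as $M$ does not depend on $i$, this will force the required equality $P_1\cap N=M=P_2\cap N$.

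The central step is this separation. Passing to $\bar{G_i}=G_i/M$, the image $N/M$ is a finite normal subgroup whose quotient $\bar{G_i}/(N/M)\cong G_i/N$ lies in $\mathcal{A}_2$. Applying Proposition 2.4 to $N/M\rightarrowtail\bar{G_i}\twoheadrightarrow G_i/N$ (with $N/M$ finite, hence finitely generated) shows that $N/M$ embeds in $\widehat{\bar{G_i}}$, i.e.\ $N/M\leq_t\bar{G_i}$; because $N/M$ is finite, this yields a finite-index subgroup of $\bar{G_i}$ meeting $N/M$ trivially, and passing to its normal core gives $\bar{L_i}\unlhd_f\bar{G_i}$ with $\bar{L_i}\cap(N/M)=1$. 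Let $P_i'\unlhd_f G_i$ be the preimage of $\bar{L_i}$; then $M\leq P_i'$ and $P_i'\cap N=M$. Setting $P_i=P_i'\cap N_i$, I obtain $P_i\unlhd_f G_i$ with $P_i\leq N_i$, and, using $M\leq N_i$, the computation $P_i\cap N=(P_i'\cap N)\cap N_i=M\cap N_i=M$. This confirms the hypothesis of Proposition 3.3, so $G_1,G_2\leq_t G$, and Corollary 3.2 then places $G$ in $\mathcal{A}_n$.

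The main obstacle is exactly the matching of the two subgroups so that $P_1\cap N=P_2\cap N$. The device that overcomes it is to collapse the problem to a single $i$-independent characteristic subgroup $M\leq N$, available because $N$ is finitely generated, and then to use the $\mathcal{A}_2$ hypothesis on each quotient $G_i/N$ to detach the finite image $N/M$ inside $G_i/M$; the membership conditions $\mathcal{A}_n$ and $\mathcal{A}_{n-1}$ are needed only at the very end, when Corollary 3.2 is invoked.
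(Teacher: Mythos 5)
Your proposal is correct and follows essentially the same route as the paper: both reduce to Proposition 3.3 and Corollary 3.2 by producing, for a given pair $\{N_1,N_2\}$, subgroups $P_i\unlhd_f G_i$ with $P_i\leq N_i$ whose intersections with $N$ equal a common finite-index subgroup of $N$ that is normal in both factors (your $M$, the paper's $U$), available because $N$ is finitely generated. The only difference is mechanical: where you pass to $G_i/M$ and reapply Proposition 2.4 to detach the finite group $N/M$, the paper uses $N\leq_t G_i$ directly to find $M_i\unlhd_f G_i$ with $M_i\leq N_i$ and $M_i\cap N\leq U$, and then sets $P_i=UM_i$, so that the modular law gives $P_i\cap N=U(M_i\cap N)=U$.
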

 
 \begin{proof} 
 
 Our plan is to show that, for each pair $\{N_1, N_2\}$ with $N_i\unlhd_f G_i$,  there exists a pair $\{P_1,P_2\}$ such that $P_i\unlhd_f G_i$, $P_i\leq N_i$ and $P_1\cap N=P_2\cap N$. It will then follow by Proposition 3.3 that $G_1$ and $G_2$ are both topologically embedded in  $G_1\ast_N G_2$, yielding the conclusion at once by virtue of Corollary 3.2. To determine the groups $P_i$, we let $U$ be a normal subgroup of $G$ contained in $N\cap N_1\cap N_2$ such that $[N:U]<\infty$-- that such a subgroup exists is a consequence of the fact that $N$ is finitely generated.  Since $N\leq_t G_i$, we can find $M_i\unlhd_f G_i$ such that $M_i\leq N_i$ and $M_i\cap N\leq U$. Now we let $P_i=UM_i$. Then $P_i\cap N=U(M_i\cap N)=U$; moreover, $P_i\unlhd_f G_i$ and $P_i\leq N_i$. Thus we have constructed the desired pair $\{P_1,P_2\}$.
 \end{proof} 

The second situation where the hypotheses of Propositon 3.3 are satisfied is when the amalgamated subgroup is cyclic and the groups are quasipotent, a property defined as follows.

\begin{definition}{\rm  A group $G$ is {\it quasipotent} if, for each $g\in G$, there exists $k\in \mathbb Z^+$ and a sequence $\{N_n\}$ of normal finite index subgroups indexed by $\mathbb Z^+$ such that $\langle g\rangle \cap N_n=\langle g^{nk}\rangle$ for each $n\in \mathbb Z^+$. If, for each $g\in G$, such a sequence $\{N_n\}$ can be chosen so that $N_n$ is a characteristic subgroup of $G$ for all $n\geq 1$, then $G$ is {\it characteristically quasipotent}.}
\end{definition}

Finite groups are, trivially, examples of characteristically quasipotent groups. In \cite{tang} and \cite{segal}, respectively,  it is proven that free groups and polycyclic groups are also in this class. Moreover,  from  [{\bf 2}, Theorem 5.1] one can glean the following proposition, which may be used to construct more examples of groups that are quasipotent and characteristically quasipotent.

\begin{proposition}{\rm (Burillo, Martino)} Let $N\rightarrowtail G\twoheadrightarrow Q$ be a group extension 
with $Q$ a quasipotent group in $\mathcal{A}_2$ and $N$ a finitely generated characteristically quasipotent group. Then $G$ is quasipotent. 

If, in addition, $Q$ is finitely generated and $N$ is a characteristic subgroup of $G$, then $G$ is characteristically quasipotent. 
\end{proposition}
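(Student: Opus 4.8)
The plan is to treat the two assertions in turn, in each case reducing everything to the control of the single cyclic subgroup $\langle g\rangle$. Throughout, write $\epsilon\colon G\to Q$ for the quotient map and fix $g\in G$ with image $\bar g=\epsilon(g)$. Since a quasipotent group is residually finite, $Q$ is residually finite and, lying in $\mathcal A_2$, is therefore highly residually finite; likewise the finitely generated characteristically quasipotent group $N$ is residually finite. Applying Theorem 2.10 to $Q$, the extension $N\rightarrowtail G\twoheadrightarrow Q$ forces $G$ to be residually finite, and, more generally, for every $M\unlhd G$ with $M\le N$ and $[N:M]<\infty$ the quotient $G/M$ is residually finite, being an extension of the finite group $N/M$ by $Q$ (condition (iii) of Theorem 2.10). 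These residual-finiteness statements are the only place where the hypothesis $Q\in\mathcal A_2$ enters.

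To prove that $G$ is quasipotent I would split according to the order of $\bar g$. If $\bar g$ has infinite order, then $\langle g\rangle\cap N=1$, and the quasipotency of $Q$ supplies a $k\in\mathbb Z^+$ and subgroups $\bar N_n\unlhd_f Q$ with $\langle\bar g\rangle\cap\bar N_n=\langle\bar g^{\,nk}\rangle$; pulling back along $\epsilon$ to $N_n=\epsilon^{-1}(\bar N_n)$ gives $\langle g\rangle\cap N_n=\langle g^{nk}\rangle$, since $g^{i}\in N_n$ iff $\bar g^{\,i}\in\bar N_n$ iff $nk\mid i$. If $\bar g$ has finite order $d$ while $g$ has infinite order, set $h=g^{d}\in N$; this element has infinite order, and the characteristic quasipotency of $N$ yields $k'\in\mathbb Z^+$ and characteristic finite-index subgroups $M_n\unlhd N$ with $\langle h\rangle\cap M_n=\langle h^{nk'}\rangle$. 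As $M_n$ is characteristic in $N$ and $N\unlhd G$, each $M_n$ is normal in $G$, and a direct computation shows that the image of $g$ in $G/M_n$ has order exactly $ndk'$. Since $G/M_n$ is residually finite and that image has finite order, one finds $\bar L_n\unlhd_f G/M_n$ meeting the cyclic subgroup generated by the image of $g$ trivially; the preimage $N_n\le G$ then satisfies $\langle g\rangle\cap N_n=\langle g^{ndk'}\rangle$, so $k=dk'$ works. The case where $g$ itself has finite order is immediate from the residual finiteness of $G$. This establishes the first assertion.

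For the characteristic refinement I would first observe that, with $Q$ now finitely generated and $N$ always finitely generated, $G$ is finitely generated, so the characteristic core (the intersection of the finitely many $\mathrm{Aut}$-images of a finite-index subgroup) of any finite-index subgroup of $G$, or of any finitely generated quotient of $G$, is again of finite index. The construction above is then upgraded as follows. In the finite-order-image case the subgroups $M_n$ are characteristic in $N$, and since $N$ is now assumed characteristic in $G$ they are characteristic in $G$; the quotient $G/M_n$ is finitely generated, so replacing $\bar L_n$ by its characteristic core in $G/M_n$ and pulling back produces $N_n$ \emph{characteristic} in $G$ with the same order of the image of $g$, whence $\langle g\rangle\cap N_n=\langle g^{ndk'}\rangle$. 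The case of finite-order $g$ is handled by taking characteristic cores in the finitely generated group $G$.

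The main obstacle is the remaining infinite-order-image case, where $\bar g$ has infinite order in $Q$ and, because $\langle g\rangle\cap N=1$, the control of $\langle g\rangle$ is a purely $Q$-theoretic phenomenon. Here one must produce \emph{characteristic} finite-index subgroups of $Q$ in which $\bar g$ has order exactly $nk$, and the naive device of passing to the characteristic core of the $\bar N_n$ above can only enlarge the order of $\bar g$, by an a priori uncontrolled factor, so that the exact arithmetic progression $\{nk\}$ need not be realised. Overcoming this is the heart of the matter, and it is precisely where the finite generation of $Q$ must be exploited in an essential way, so as to guarantee a supply of characteristic finite-index subgroups whose exponents along $\langle\bar g\rangle$ can be prescribed; this yields the characteristic quasipotency of $G$ and is the content drawn from Burillo and Martino's Theorem 5.1.
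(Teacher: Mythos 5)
First, a point of comparison: the paper itself offers no proof of this proposition --- it is stated as something one can ``glean'' from Burillo and Martino [{\bf 2}, Theorem 5.1] --- so your proposal has to stand entirely on its own. On those terms, your proof of the \emph{first} assertion is complete and correct. The trichotomy on the order of $\bar g=\epsilon(g)$, the pullback of the subgroups $\bar N_n\unlhd_f Q$ when $\bar g$ has infinite order, the passage to $h=g^d$ and to the quotient $G/M_n$ when $\bar g$ has finite order $d$ (where residual finiteness of $G/M_n$ comes from Theorem 2.10(iii) applied to the highly residually finite group $Q$, and the image of $g$ in $G/M_n$ has exact order $ndk'$), and the appeal to residual finiteness of $G$ via Theorem 2.10(ii) for torsion $g$: each step checks out, and the hypotheses ($Q$ quasipotent and in $\mathcal{A}_2$, $N$ finitely generated and characteristically quasipotent) are invoked exactly where they are needed.

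The \emph{second} assertion, however, is not proved, and this is a genuine gap rather than a presentational one. You do correctly upgrade the two cases in which $\bar g$ has finite order: $M_n$ is characteristic in $G$ because it is characteristic in $N$ and $N$ is characteristic in $G$; replacing $\bar L_n$ by its characteristic core in the finitely generated group $G/M_n$ keeps its intersection with $\langle gM_n\rangle$ trivial; and since the pullback still contains $M_n$, the exact equality $\langle g\rangle\cap N_n=\langle g^{ndk'}\rangle$ survives. But in the case where $\bar g$ has infinite order you produce nothing, and, as you yourself diagnose, nothing formal will work: the characteristic core $D_n$ of $\epsilon^{-1}(\bar N_n)$ only yields $\langle g\rangle\cap D_n=\langle g^{m_n}\rangle$ with $nk\mid m_n$ and $m_n$ uncontrolled, and a set of orders realizable by characteristic finite-index subgroups that is merely closed under least common multiples and contains some multiple of every $nk$ need not contain any full arithmetic progression $\{nk'\}$. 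Since the hypotheses grant only quasipotency, not characteristic quasipotency, of $Q$, closing this case requires genuinely new input about finitely generated quasipotent groups --- which is exactly what the citation to [{\bf 2}, Theorem 5.1] is being asked to supply. Deferring ``the heart of the matter'' to the reference means the characteristic refinement remains unproved in your proposal; only the first assertion is established.
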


Proposition 3.5 yields, for example, that finitely generated virtually free groups and virtually polycyclic groups are all characteristically quasipotent. Another consequence of this proposition is that (finitely generated free)-by-polycyclic groups are quasipotent.  Additional examples of groups that are quasipotent are provided by [{\bf 2}, Theorems 3.6, 3.7, 3.8].

For our purposes the following property of quasipotent groups will be important; it  can be deduced at once from the definition.

\begin{lemma} If $G$ is a quasipotent group, then every cyclic subgroup of $G$ is topologically embedded in $G$. 
\end{lemma}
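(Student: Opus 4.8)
The plan is to verify characterization (ii) of topological embedding directly. Given a cyclic subgroup $C=\langle g\rangle$ of $G$, I must show that for every $K\leq_f C$ there exists $L\leq_f G$ with $L\cap C\leq K$; this is precisely the statement $C\leq_t G$.

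First I would fix the generator $g\in G$ and invoke the definition of quasipotence to produce a positive integer $k$ together with a sequence $\{N_n\}$ of finite-index normal subgroups of $G$ satisfying $\langle g\rangle\cap N_n=\langle g^{nk}\rangle$ for every $n\in\mathbb Z^+$. The whole argument then consists of selecting the correct index $n$ to suit the prescribed subgroup $K$.

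Next, given an arbitrary $K\leq_f C$, I would observe that since $C$ is cyclic and $K$ has finite index, there is a positive integer $d$ with $g^d\in K$, and hence $\langle g^d\rangle\leq K$; one may take $d=[C:K]$. Choosing $n$ to be any multiple of $d$, the divisibility $d\mid nk$ yields $\langle g^{nk}\rangle\leq\langle g^d\rangle$, so that
$$N_n\cap C=\langle g\rangle\cap N_n=\langle g^{nk}\rangle\leq\langle g^d\rangle\leq K.$$
Thus $L=N_n$ is the required finite-index subgroup of $G$, and as $K$ was arbitrary this establishes $C\leq_t G$.

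This single computation simultaneously covers the infinite-order case, in which every finite-index $K$ is of the form $\langle g^d\rangle$, and the torsion case, in which the essential subgroup is the trivial one, handled by taking $d$ a multiple of the order of $g$. I do not anticipate a genuine obstacle here—the lemma is, as noted, an immediate consequence of the definition—the only point warranting attention being the passage from an arbitrary finite-index subgroup $K$ to a cofinal power $\langle g^d\rangle$ and the matching choice of an index $n$ divisible by $d$ in the quasipotent sequence.
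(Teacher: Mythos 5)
Your proof is correct, and it is precisely the direct verification the paper has in mind: the paper omits the argument entirely, remarking only that the lemma "can be deduced at once from the definition," and your passage from an arbitrary $K\leq_f \langle g\rangle$ to a power $\langle g^d\rangle$ followed by the choice of $n$ with $d\mid nk$ is that deduction spelled out.
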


Now we examine free products of quasipotent groups with cyclic amalgam, showing that they satisfy the hypotheses of Proposition 3.3.

\begin{lemma} {\rm (L. Ribes and P. Zalesskii \cite{rz})} If $G_1$ and $G_2$ are quasipotent groups with a common cyclic subgroup $A$, then $G_1$ and $G_2$ are both topologically embedded in $G_1\ast_A G_2$.
\end{lemma}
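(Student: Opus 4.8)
The plan is to verify the hypotheses of Proposition 3.3 with the amalgamated subgroup $A=\langle a\rangle$ cyclic, so that topological embedding of $G_1$ and $G_2$ in $G_1\ast_A G_2$ follows immediately. Thus, given any pair $\{N_1,N_2\}$ with $N_i\unlhd_f G_i$, I must produce a pair $\{P_1,P_2\}$ with $P_i\unlhd_f G_i$, $P_i\leq N_i$, and $P_1\cap A=P_2\cap A$. The point of quasipotence is precisely that it controls how finite-index normal subgroups meet the cyclic group $A$: by Lemma 3.7 (or directly from the definition applied to the generator $a$ in each $G_i$), the intersections of finite-index normal subgroups with $A=\langle a\rangle$ are cofinal among the subgroups $\langle a^{m}\rangle$, and moreover one can realize any prescribed power.

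First I would observe that $N_i\cap A=\langle a^{d_i}\rangle$ for some positive integer $d_i$ (or is trivial, handled similarly). Using the quasipotence of $G_i$ applied to the element $a$, there is a positive integer $k_i$ and normal finite-index subgroups $M_{i,n}\unlhd_f G_i$ with $\langle a\rangle\cap M_{i,n}=\langle a^{nk_i}\rangle$. The key reduction is to choose a common target power: let $m$ be a positive integer divisible by $d_1$, $d_2$, $k_1$ and $k_2$ (for instance a suitable common multiple). Then for appropriate indices $n_i$ one arranges $n_ik_i=m$, giving normal finite-index subgroups $M_i:=M_{i,n_i}\unlhd_f G_i$ with $M_i\cap A=\langle a^{m}\rangle$, the \emph{same} subgroup of $A$ on both sides. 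Finally I would set $P_i=M_i\cap N_i$. This is normal of finite index in $G_i$ and contained in $N_i$, and since $M_i\cap A=\langle a^m\rangle\leq\langle a^{d_i}\rangle=N_i\cap A$, we get $P_i\cap A=M_i\cap N_i\cap A=M_i\cap A=\langle a^{m}\rangle$, which is independent of $i$. Hence $P_1\cap A=P_2\cap A$, as required.

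The main obstacle is the bookkeeping needed to force the two intersections $M_1\cap A$ and $M_2\cap A$ to coincide as subgroups of the \emph{shared} cyclic group $A$, rather than merely having the same index. Quasipotence as defined gives, for each $G_i$ separately, a sequence whose intersections with $\langle a\rangle$ run through $\langle a^{nk_i}\rangle$; the delicate step is to select indices $n_1,n_2$ so that $n_1k_1=n_2k_2=m$ exactly, so that both sides cut out literally the same power subgroup $\langle a^m\rangle$. Once a common multiple $m$ of $k_1$ and $k_2$ is fixed and the corresponding terms of the two sequences are extracted, the rest is routine, and Proposition 3.3 delivers the conclusion. I would also note in passing the trivial-intersection case $A\cap N_i=1$, which only forces $m$ larger and does not affect the argument.
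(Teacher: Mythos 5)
Your proof is correct and follows essentially the same route as the paper: both verify the hypotheses of Proposition 3.3 by applying quasipotence to the generator $a$ in each $G_i$ and then selecting terms of the two sequences whose intersections with $A$ are literally the same subgroup $\langle a^m\rangle$. The only cosmetic difference is your choice of $m$ as a common multiple of $d_1,d_2,k_1,k_2$ with $P_i=M_{i,m/k_i}\cap N_i$, whereas the paper first enlarges the $k_i$ (via subsequences) so that $\langle a^{nk_i}\rangle\leq A\cap N_1\cap N_2$ and then takes $m=k_1k_2$, setting $P_1={}_1M_{k_2}\cap N_1$ and $P_2={}_2M_{k_1}\cap N_2$.
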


\begin{proof} We need to show that, for each pair $\{N_1,N_2\}$ with $N_i\unlhd_f G_i$, there exists a pair $\{P_1,P_2\}$ such that $P_i\unlhd_f G_i$, $P_i\leq N_i$ and $P_1\cap A=P_2\cap A$. To determine the groups $P_i$, we begin by letting $a$ be a generator of $A$. Since $G_i$ is quasipotent, there exists a sequence $\{_iM_n\}_{n\in \mathbb Z^+}$ of  normal finite-index subgroups of $G_i$ such that $_iM_n\cap A=\langle a^{nk_i}\rangle$ for some $k_i\in \mathbb Z^+$. Moreover, by choosing subsequences of $\{_iM_n\}$, we can make $k_i$ larger than any fixed value. Hence we can select the sequence $\{_iM_n\}$ so that $$_iM_n\cap A=\langle a^{nk_i}\rangle \leq A\cap N_1\cap N_2. $$ Now, if we let $P_1=\ _1M_{k_2}\cap N_1$ and $P_2=\ _2M_{k_1}\cap N_2$, then the pair $\{P_1,P_2\}$ has the desired properties.
\end{proof}

In conjunction with Corollary 3.2, the preceding two lemmas immediately yield the following theorem.

\begin{theorem} Assume $G_1$ and $G_2$ are quasipotent groups with a common cyclic subgroup $A$. If  $G_1$ and $G_2$ are both in $\mathcal{A}_n$, then $G_1\ast_A G_2$ also belongs to $\mathcal{A}_n$.
\end{theorem}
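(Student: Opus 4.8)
The plan is to verify the three hypotheses of Corollary 3.2 with $H = A$ and then invoke that corollary directly; the two preceding lemmas supply the topological-embedding data, so the only point requiring an independent word is the cohomological condition imposed on the amalgamated subgroup.

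First I would note that a cyclic group lies in $\mathcal{A}_m$ for every $m \geq 0$. Indeed, $A$ is either finite or infinite cyclic: in the former case it is a finite group, and in the latter it is free of rank one, so in either case Proposition 2.3 places $A$ in every class $\mathcal{A}_m$. In particular $A \in \mathcal{A}_{n-1}$, which is exactly what Corollary 3.2 requires of the amalgam. (When $n=1$ this step is in any case vacuous, since $\mathcal{A}_0$ is the class of all groups by Proposition 2.2, and the case $n=0$ is trivial for the same reason.)

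Next I would record the embedding data. Since $A$ is a cyclic subgroup of the quasipotent group $G_i$, Lemma 3.7 gives $A \leq_t G_i$ for $i = 1, 2$, which is the requirement that $H$ be topologically embedded in both factors. Lemma 3.8, applied to the quasipotent groups $G_1$ and $G_2$ with their common cyclic subgroup $A$, then shows that $G_1$ and $G_2$ are each topologically embedded in $G = G_1 \ast_A G_2$, supplying the remaining embedding hypothesis of Corollary 3.2.

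With $G_1, G_2 \in \mathcal{A}_n$ assumed, $A \in \mathcal{A}_{n-1}$ established, $A \leq_t G_1$, $A \leq_t G_2$, and $G_1, G_2 \leq_t G$, every hypothesis of Corollary 3.2 is met, and it delivers $G \in \mathcal{A}_n$ at once. I do not expect a genuine obstacle here: the substance of the theorem resides entirely in the two lemmas (and ultimately in the quasipotence arguments behind Lemma 3.8), while the statement itself is a formal assembly of those lemmas with Corollary 3.2. The only care needed is the mild finite-versus-infinite case split used to confirm $A \in \mathcal{A}_{n-1}$.
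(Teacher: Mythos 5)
Your proposal is correct and follows exactly the paper's route: the paper proves this theorem by combining its two preceding lemmas (cyclic subgroups of quasipotent groups are topologically embedded, and the Ribes--Zalesskii lemma that $G_1, G_2 \leq_t G_1 \ast_A G_2$) with Corollary 3.2, just as you do. Your only additions are cosmetic --- you spell out the finite-versus-infinite-cyclic case split showing $A \in \mathcal{A}_{n-1}$, which the paper leaves implicit, and your lemma numbers are shifted by one (the paper's Lemmas 3.6 and 3.7 are what you cite as 3.7 and 3.8).
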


One important special case of the above theorem is provided below.

\begin{corollary} Assume $G_1$ and $G_2$ are groups that are each either virtually free of finite rank or virtually polycyclic. If $G_1$ and $G_2$ have a shared cyclic subgroup $A$, then $G_1\ast_A G_2$ is in $\mathcal{A}_n$ for all $n\geq 0$.
\end{corollary}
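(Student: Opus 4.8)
The plan is to derive this directly from Theorem 3.9, so the whole task reduces to verifying its two hypotheses for each of the two possible types of factor group. First I would note that Theorem 3.9 requires two things: that $G_1$ and $G_2$ lie in $\mathcal{A}_n$ for all $n\geq 0$, and that they be quasipotent. The membership in $\mathcal{A}_n$ is already in hand: Corollary 2.8 states that any group that is virtually poly-(finitely generated free) is in $\mathcal{A}_n$ for all $n\geq 0$, and this class plainly contains both the virtually (finite rank) free groups and the virtually polycyclic groups. So both factors satisfy the $\mathcal{A}_n$ hypothesis of Theorem 3.9 with no further work.

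The remaining point is quasipotency. Here I would appeal to the discussion immediately following Definition (quasipotent) in the text. It is recorded there, citing \cite{tang} and \cite{segal}, that free groups and polycyclic groups are characteristically quasipotent; moreover, the remark after Proposition 3.5 explicitly states that \emph{finitely generated virtually free groups and virtually polycyclic groups are all characteristically quasipotent}. Since a characteristically quasipotent group is in particular quasipotent, every group of the two types named in the corollary is quasipotent. (One should take ``virtually free of finite rank'' to be finitely generated, so that the cited statement applies.)

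With both hypotheses confirmed, Theorem 3.9 applies verbatim: if $G_1$ and $G_2$ are quasipotent, lie in $\mathcal{A}_n$, and share a common cyclic subgroup $A$, then $G_1\ast_A G_2$ lies in $\mathcal{A}_n$. Running this for every $n\geq 0$ gives the conclusion.

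I do not anticipate a genuine obstacle, since this corollary is designed as a clean specialization of Theorem 3.9; the only subtlety is bookkeeping. The one place to be careful is that the quasipotency facts quoted are stated for \emph{finitely generated} virtually free and virtually polycyclic groups, so the phrase ``virtually free of finite rank'' in the hypothesis is exactly what is needed to invoke them; virtually polycyclic groups are automatically finitely generated. Provided this finiteness is read into the hypotheses, the argument is a two-line deduction.
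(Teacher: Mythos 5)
Your proof is correct and is exactly the deduction the paper intends: the corollary is stated as an immediate specialization of the preceding theorem, with $\mathcal{A}_n$-membership supplied by Corollary 2.8 and quasipotency supplied by the remark following Proposition 3.5 (via Burillo--Martino), which is precisely your argument. Your attention to the finite-generation point (that ``virtually free of finite rank'' and virtually polycyclic groups are finitely generated, so the quoted quasipotency facts apply) is a sound and worthwhile bit of bookkeeping that the paper leaves implicit.
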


The remainder of the section is devoted to HNN extensions. We will employ the following notation for these constructions: given a discrete group $G$ and an isomorphism $\phi:H\to K$, where both $H$ and $K$ are subgroups of $G$,  the HNN extension of $G$ with respect to $\phi$ is denoted by $G_{\phi}$. 
In other words,
$$G_{\phi}=\langle G, t\ |\ t^{-1}ht=\phi(h)\ \mbox{for all $h\in H$} \rangle .$$

In addition to HNN extensions of discrete groups, we will refer to profinite HNN extensions.
As described in \cite{profinite}, from any profinite group $\Gamma$ and any continous isomorphism $\theta: \Delta\to \Lambda$, where $\Delta$ and $\Lambda$ are both closed subgroups of $\Gamma$, we can form the profinite HNN extension of $\Gamma$ with respect to $\theta$. If $\Gamma$, $\Delta$ and $\Lambda$ are each embedded in the profinite HNN extension, we refer to the latter as a {\it proper}  
 profinite HNN extension.  Any proper profinite HNN extension gives rise to a Mayer-Vietoris sequence that relates the cohomology of the extension to that of the group $\Gamma$ and the subgroup $\Delta$. 

Our interest is in the case when $\Gamma=\hat{G}$, $\Delta=\hat{H}$, $\Lambda=\hat{K}$ and $\theta=\hat{\phi}$, where $G$ is a discrete group with topologically embedded subgroups $H$, $K$ and $\phi:H\to K$ is an isomorphism. In this case,  $\hat{G}_{\phi}$ is the profinite HNN extension of $\hat{G}$ with respect to $\hat{\phi}$. Moreover, if $G$ is topologically embedded in $G_{\phi}$, then this profinite HNN extension is proper and, therefore, gives rise to a Mayer-Vietoris sequence. This sequence and its relationship to the discrete Mayer-Vietoris sequence for $G_{\phi}$ are described in the following theorem.

\begin{theorem} Let $G$ be a group with isomorphic, topologically embedded subgroups $H$ and $K$. Assume $\phi:H\to K$ is an isomorphism and $G\leq_t G_{\phi}$. Then, for each discrete $\hat{G}$-module $A$ and positive integer $i$,  we have a commutative diagram
\begin{equation} \begin{CD}
H_c^{i-1}(\hat{H},A) @>>> H_c^i(\hat{G_{\phi}},A) @>>> H_c^i(\hat{G},A)  @>>> H_c^i(\hat{H},A)\\
@VVV @VVV @VVV @VVV \\
H^{i-1}(H,A) @>>> H^i(G_{\phi},A) @>>> H^i(G,A) @>>> H^i(H,A),
\end{CD} \end{equation}
in which the rows are exact and the vertical maps are induced by the completion maps for $G$, $H$ and $G_{\phi}$.
\end{theorem}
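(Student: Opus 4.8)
The plan is to construct the commutative diagram (3.2) by comparing the discrete Mayer-Vietoris sequence for the HNN extension $G_\phi$ with the profinite Mayer-Vietoris sequence for the proper profinite HNN extension $\hat G_\phi$, and then verifying that the vertical maps induced by the various completion maps commute with the connecting and restriction homomorphisms. The bottom row is the standard discrete Mayer-Vietoris sequence associated to the HNN extension $G_\phi$, which arises from the decomposition of the relevant chain complex (equivalently, from the action of $G_\phi$ on the Bass-Serre tree). For the top row, the hypotheses $H\leq_t G$, $K\leq_t G$ and $G\leq_t G_\phi$ guarantee that $\hat G_\phi$ is the profinite HNN extension of $\hat G$ relative to $\hat\phi:\hat H\to\hat K$ and that this extension is \emph{proper}, so the profinite Mayer-Vietoris sequence of \cite{profinite} applies verbatim to give the top row. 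I would state explicitly that topological embeddedness is exactly what makes $\hat H$, $\hat K$ and $\hat G$ embed in $\hat G_\phi$ via assertion (iii) in the list following Definition 2.3, which is the condition needed to invoke the profinite Mayer-Vietoris sequence.

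With both rows in hand, the substance of the theorem is the commutativity of the three squares. Each vertical map is induced by the appropriate completion map ($c_G$, $c_H$, or $c_{G_\phi}$), and all the horizontal maps are natural: the restriction maps $H_c^i(\hat G_\phi,A)\to H_c^i(\hat G,A)$ and $H^i(G_\phi,A)\to H^i(G,A)$ are functorial in the group, and the connecting homomorphisms in both Mayer-Vietoris sequences are functorially induced by the inclusions $H\hookrightarrow G$ and the twisting isomorphism $\phi$ (respectively $\hat\phi$). I would establish commutativity by tracing each square back to the naturality of the constructions. The cleanest way to organize this is to observe that the Mayer-Vietoris sequence in each setting is the long exact cohomology sequence of a short exact sequence of coefficient modules (or chain complexes), and that $c_{G_\phi}$ induces a morphism of these short exact sequences compatible with the restriction to $\hat G$ and $\hat H$ via $c_G$ and $c_H$. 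Commutativity of all squares then follows from the naturality of the long exact sequence in cohomology. This mirrors exactly the argument underlying Theorem 3.1 for free products with amalgamation, so I would indicate that the proof is formally identical to that case, with the single amalgam term $H_c^i(\hat H,A)$ now playing the role of both the stable-letter twisting and the subgroup term.

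The main obstacle, and the only point requiring genuine care, is verifying that the completion maps intertwine the connecting homomorphisms $H_c^{i-1}(\hat H,A)\to H_c^i(\hat G_\phi,A)$ and $H^{i-1}(H,A)\to H^i(G_\phi,A)$, together with the final map to $H_c^i(\hat H,A)$, which for an HNN extension is the \emph{difference} of restriction to $\hat H$ and restriction to $\hat K$ followed by the isomorphism $\hat\phi^*$. One must check that $\hat\phi$ is genuinely the profinite completion of $\phi$ and that $c_G$ carries the $H$-restriction to the $\hat H$-restriction and the $K$-restriction to the $\hat K$-restriction compatibly; this is where the requirement that $H$ and $K$ be \emph{topologically embedded} (so that $\hat\phi:\hat H\to\hat K$ is well-defined and injective, and the restriction maps are the expected ones) is used essentially. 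Once this compatibility of the twisting map with completion is confirmed, the rest of the diagram chase is routine naturality.
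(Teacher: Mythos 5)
Your proposal is correct and takes essentially the same route as the paper, which asserts Theorem 3.10 on the strength of the discussion immediately preceding it: the topological embeddings $H\leq_t G$, $K\leq_t G$ and $G\leq_t G_{\phi}$ make $\hat{G}_{\phi}$ the \emph{proper} profinite HNN extension of $\hat{G}$ with respect to $\hat{\phi}$, so the profinite Mayer--Vietoris sequence of Ribes and Zalesskii supplies the top row, the discrete Mayer--Vietoris sequence supplies the bottom row, and commutativity follows from naturality of the two constructions under the completion maps. Your additional care with the difference map $\mathrm{res}_H-\hat{\phi}^{*}\circ\mathrm{res}_K$ and with identifying $\hat{\phi}$ as the completion of $\phi$ fills in details the paper leaves implicit, but introduces no divergence from its argument.
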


Theorem 3.10 provides us with the following conditions under which an HNN extension is in the class $\mathcal{A}_n$. 

\begin{corollary} Let $\phi:H\to K$ be an isomorphism, where $H$ and $K$ are topologically embedded subgroups of a group $G$. Assume, further, that $G\leq_t G_{\phi}$.  If $G$ is in $\mathcal{A}_n$ and $H$ is in $\mathcal{A}_{n-1}$, then $G_{\phi}$ belongs to $\mathcal{A}_n$.
\end{corollary}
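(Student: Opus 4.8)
The plan is to reproduce, for the HNN setting, the short diagram chase used to prove Corollary 3.2, now applied to the Mayer--Vietoris comparison diagram (3.2) furnished by Theorem 3.10. To establish membership in $\mathcal{A}_n$ via the surjectivity criterion of Lemma 2.1(ii), I would fix an arbitrary finite discrete $\hat{G_\phi}$-module $A$ and prove that the completion-induced map $H_c^i(\hat{G_\phi},A)\to H^i(G_\phi,A)$ is surjective for $0\leq i\leq n$. The standing hypotheses $H\leq_t G$, $K\leq_t G$ and $G\leq_t G_\phi$ are exactly those needed to invoke Theorem 3.10, so the diagram (3.2), with $A$ restricted via the relevant completion maps to a module over each of $\hat{G}$, $\hat{H}$, $G$ and $H$, is at our disposal for every positive integer $i$. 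The degree $i=0$ is handled directly, since $H_c^0(\hat{G_\phi},A)$ and $H^0(G_\phi,A)$ both compute the $G_\phi$-fixed subgroup of $A$; so the real work concerns $1\leq i\leq n$.

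For each such $i$ I would read off the behaviour of the four vertical maps in (3.2). The leftmost map $H_c^{i-1}(\hat{H},A)\to H^{i-1}(H,A)$ is bijective, since $0\leq i-1\leq n-1$ and $H\in\mathcal{A}_{n-1}$; the third map $H_c^i(\hat{G},A)\to H^i(G,A)$ is bijective, since $i\leq n$ and $G\in\mathcal{A}_n$; and the rightmost map $H_c^i(\hat{H},A)\to H^i(H,A)$ is injective, since $H\in\mathcal{A}_{n-1}$ makes it bijective for $i\leq n-1$ and injective for $i=n$ by the equivalence in Lemma 2.1. Thus in the whole range the first and third columns are epimorphisms and the fourth is a monomorphism, which is precisely the input of the surjectivity form of the four-lemma. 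The resulting chase on the two exact rows of (3.2) --- lift a class of $H^i(G_\phi,A)$ across the third column after pushing it into $H^i(G,A)$, use injectivity of the fourth column together with exactness to pull it back along the top row, and absorb the remaining discrepancy via surjectivity of the first column --- forces the middle map $H_c^i(\hat{G_\phi},A)\to H^i(G_\phi,A)$ to be onto. Since this holds for all $1\leq i\leq n$, and trivially for $i=0$, and since $A$ was arbitrary, Lemma 2.1(ii) places $G_\phi$ in $\mathcal{A}_n$.

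I do not anticipate a genuine obstacle in the corollary itself, as the argument is formal once (3.2) is available; all the substantive work has already been absorbed into Theorem 3.10, where the profinite Mayer--Vietoris sequence is constructed and matched to the discrete one, using $G\leq_t G_\phi$ to guarantee that $\hat{G}_\phi$ is a \emph{proper} profinite HNN extension. The only points demanding attention are the off-by-one bookkeeping linking the hypotheses $H\in\mathcal{A}_{n-1}$ and $G\in\mathcal{A}_n$ to the shifted degrees in the first and fourth columns, and the fact that the coefficient module must be taken over $\hat{G_\phi}$ so that the target groups $H^i(G_\phi,A)$ and $H_c^i(\hat{G_\phi},A)$ are defined.
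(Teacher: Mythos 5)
Your proposal is correct and follows essentially the same argument as the paper: apply the comparison diagram (3.2) from Theorem 3.10, note that the first and third vertical maps are bijective and the fourth injective under the hypotheses $H\in\mathcal{A}_{n-1}$, $G\in\mathcal{A}_n$, and conclude surjectivity of the middle map by the four-lemma chase, which by Lemma 2.1(ii) places $G_\phi$ in $\mathcal{A}_n$. Your added care about taking $A$ as a $\hat{G_\phi}$-module (the paper's proof writes ``$\hat{G}$-module'') and your explicit treatment of $i=0$ are sensible refinements but do not alter the route.
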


\begin{proof} Let $A$ be a finite discrete $\hat{G}$-module, and consider diagram (3.2) for $0\leq i\leq n$. Then the first and third vertical maps are bijective, and the fourth is injective. Therefore, the second map must be surjective, so that $G_{\phi}$ is in $\mathcal{A}_n$.
\end{proof}

Analogous to Proposition 3.3, we have the following conditions that guarantee that the base group is topologically embedded in an HNN extension.

\begin{proposition} Let $\phi:H\to K$ be an isomorphism, where $H$ and $K$ are subgroups of a group $G$. Assume that, for every $N\unlhd_f G$, there exists $P\unlhd_f G$ such that $P\leq N$ and $P\cap K=\phi(P\cap H)$. Then $G\leq_t G_{\phi}$.
\end{proposition}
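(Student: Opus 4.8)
The plan is to establish the defining condition for $G\leq_t G_{\phi}$: that for every finite-index subgroup of $G$ there is a finite-index subgroup of $G_{\phi}$ meeting $G$ inside it. Since the normal finite-index subgroups of $G$ are cofinal among all finite-index subgroups, it suffices to produce, for each $N\unlhd_f G$, a subgroup $L\leq_f G_{\phi}$ with $L\cap G\leq N$. The argument runs parallel to the proof of Proposition 3.3, with a virtually free HNN extension of a finite group playing the role that the amalgam of two finite groups played there.

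Given $N\unlhd_f G$, I would invoke the hypothesis to obtain $P\unlhd_f G$ with $P\leq N$ and $P\cap K=\phi(P\cap H)$. The force of this last equation is that $\phi$ carries $H\cap P$ isomorphically onto $K\cap P$; hence, in the finite quotient $\bar G=G/P$, the isomorphism $\phi:H\to K$ descends to an isomorphism $\bar\phi:HP/P\to KP/P$ between the images of $H$ and $K$. One then forms the discrete HNN extension $\bar G_{\bar\phi}$.

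The key structural input is that $\bar G_{\bar\phi}$ is virtually free: being an HNN extension of the finite group $\bar G$ with finite associated subgroups, it is finitely generated and acts on its Bass--Serre tree with finite stabilizers, and a finitely generated group acting on a tree with finite stabilizers is virtually free. Next I would note that the rules $g\mapsto gP$ (for $g\in G$) and $t\mapsto t$ respect the defining relations $t^{-1}ht=\phi(h)$, since these are sent to $t^{-1}\bar h t=\bar\phi(\bar h)$; they therefore extend to a homomorphism $\theta:G_{\phi}\to\bar G_{\bar\phi}$ whose restriction to $G$ is the canonical projection $G\twoheadrightarrow G/P$. Choosing a free subgroup $F$ of finite index in $\bar G_{\bar\phi}$ and setting $L=\theta^{-1}(F)$ yields $L\leq_f G_{\phi}$.

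Finally I would compute $L\cap G$. As $\theta$ restricts on $G$ to the projection onto $\bar G$, we have $L\cap G=\{g\in G:gP\in F\}$; and since $F$ is torsion-free whereas every element of the finite group $\bar G$ has finite order, the intersection $F\cap\bar G$ is trivial. Consequently $L\cap G=\ker(\theta|_G)=P\leq N$, which establishes $G\leq_t G_{\phi}$. I expect the points requiring the most care to be the descent of $\phi$ to $\bar\phi$ and the well-definedness of $\theta$---both resting on the identity $P\cap K=\phi(P\cap H)$---together with the appeal to the virtual freeness of HNN extensions of finite groups, which supplies the torsion-free finite-index subgroup $F$ driving the final computation.
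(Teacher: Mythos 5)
Your proof is correct and follows essentially the same route as the paper's: pass to the finite quotient $G/P$, use $P\cap K=\phi(P\cap H)$ to descend $\phi$ to an isomorphism of the images of $H$ and $K$, form the HNN extension of $G/P$, and pull back a free finite-index subgroup of this virtually free group along the induced map $\theta:G_{\phi}\to\bar G_{\bar\phi}$. You merely supply details the paper leaves implicit (the Bass--Serre justification of virtual freeness and the torsion-freeness argument giving $L\cap G=P$ exactly, where the paper only needs $U\cap G\leq P$).
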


\begin{proof} Let $N\unlhd_f G$. Then there is a subgroup $P$ satisfying the three conditions stated in the hypothesis. Since 
$P\cap K=\phi(P\cap H)$, $\phi$ induces an isomorphism $\bar{\phi}: PH/P\to PK/P$.  Hence we can form the HNN extension $\bar{G}$ of $G/P$ with respect to $\bar{\phi}$. Moreover, we have an epimorphism $\theta:G_{\phi}\to \bar{G}$ that maps $G$ canonically onto $G/P$. As an HNN extension of a finite group, $\bar{G}$ is virtually free. Let $U$ be the inverse image under $\theta$ of a free subgroup of $\bar{G}$ with finite index. Then $[G_{\phi}:H]< \infty$ and $U\cap G\leq P\leq N$. Therefore, $G\leq_t G_{\phi}$.
\end{proof}

Corollary 3.11 and Proposition 3.12 allow us to prove the following result.

\begin{theorem} Let $G$ be a group and $H\leq_t G$.  Define the group $\Gamma$ by
$$\Gamma=\langle G,t\ |\  [t,h]=1\ \mbox{for all $h\in H$} \rangle.$$
If $G$ and $H$ belong to $\mathcal{A}_n$  and $\mathcal{A}_{n-1}$, respectively, then $\Gamma$ is in $\mathcal{A}_n$.
\end{theorem}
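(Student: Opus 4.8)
The plan is to recognize $\Gamma$ as an HNN extension and then invoke Corollary 3.11. Observe that the defining relations $[t,h]=1$ for $h\in H$ are precisely the relations $t^{-1}ht=h$, so $\Gamma$ is exactly the HNN extension $G_{\phi}$ associated to the identity isomorphism $\phi=\mathrm{id}_H:H\to H$ (so that the subgroup $K$ coincides with $H$). With this identification made, the hypotheses of Corollary 3.11 that do not concern the topological embedding of the base group are already in hand: $H=K\leq_t G$ by assumption, $G\in\mathcal{A}_n$, and $H\in\mathcal{A}_{n-1}$.

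Consequently, everything reduces to verifying the one remaining hypothesis of Corollary 3.11, namely that $G\leq_t G_{\phi}=\Gamma$. For this I would appeal to Proposition 3.12. Its criterion asks that, for each $N\unlhd_f G$, there exist $P\unlhd_f G$ with $P\leq N$ and $P\cap K=\phi(P\cap H)$. Since in our situation $K=H$ and $\phi=\mathrm{id}_H$, the equation $P\cap K=\phi(P\cap H)$ degenerates into the tautology $P\cap H=P\cap H$. Hence the choice $P=N$ satisfies all three requirements for every $N\unlhd_f G$, and Proposition 3.12 immediately yields $G\leq_t\Gamma$.

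Once $G\leq_t\Gamma$ is in place, all hypotheses of Corollary 3.11 are met, and that corollary delivers $\Gamma\in\mathcal{A}_n$, completing the argument. The only genuine step here is the initial reinterpretation of the commuting-generator presentation as the identity HNN extension; after that, I do not expect any real obstacle, precisely because the embedding condition of Proposition 3.12 is met trivially (with $P=N$) rather than demanding the delicate quasipotence or finite-generation arguments required in Lemma 3.8 or Theorem 3.4. In effect, this theorem is the degenerate, and therefore easiest, special case of the HNN machinery developed earlier in the section.
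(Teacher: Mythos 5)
Your proof is correct and is exactly the paper's own argument: the paper likewise identifies $\Gamma$ with the HNN extension $G_{\phi}$ for $\phi=\mathrm{id}_H$, verifies $G\leq_t\Gamma$ via Proposition 3.12 (where, as you note, the condition $P\cap K=\phi(P\cap H)$ holds trivially), and concludes with Corollary 3.11. You have merely written out the details that the paper leaves as ``readily seen.''
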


\begin{proof} Applying Proposition 3.12, $G$ is readily seen to be topologically embedded in $\Gamma$. The conclusion then follows by Corollary 3.11.
\end{proof}

As a consequence of this theorem, we can deduce that any right-angled Artin group is contained in $\mathcal{A}_n$ for $n\geq 0$. These groups are defined as follows. 

\begin{definition}{\rm A {\it right-angled Artin group} is any group with a finite generating set $X$ and a presentation of the form
$$\langle X\ |\ [x,y]=1 \ \mbox{\rm for all $(x, y)\in \Sigma$}\ \rangle$$
for some subset $\Sigma$ of the Cartesian product $X\times X$.}
\end{definition}

Our approach to proving that right-angled Artin groups are in $\mathcal{A}_n$ for all $n\geq 0$ is similar to that employed in the proof of [{\bf 2}, Theorem 3.8]. Before proceeding with the proof, we require the following lemma.

\begin{lemma} If $G$ is a right-angled Artin group with generating set $X$, then, for every $X'\subseteq X$, $\langle X'\rangle \leq_t G.$
\end{lemma}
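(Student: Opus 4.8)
The plan is to prove that every ``full'' subgroup $\langle X'\rangle$ generated by a subset $X'\subseteq X$ of the standard generators is topologically embedded in the right-angled Artin group $G$. The key structural fact I would exploit is the well-known retraction property of right-angled Artin groups: for any $X'\subseteq X$ there is a retraction $\rho:G\to \langle X'\rangle$ defined on generators by $\rho(x)=x$ for $x\in X'$ and $\rho(x)=1$ for $x\in X\setminus X'$. This is a homomorphism because each defining relator $[x,y]$ maps to a relator or to the trivial element in $\langle X'\rangle$, and it restricts to the identity on $\langle X'\rangle$. The existence of such a retraction is what makes the embedding ``topological'' rather than merely algebraic.

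First I would recall the criterion (ii) for topological embedding from the remark following the definition of $\leq_t$: to show $\langle X'\rangle \leq_t G$ it suffices to prove that for each finite-index subgroup $K\leq_f \langle X'\rangle$ there exists $L\leq_f G$ with $L\cap \langle X'\rangle \leq K$. Given such a $K$, I would take $L=\rho^{-1}(K)$. Since $\rho$ is surjective onto $\langle X'\rangle$ and $K$ has finite index there, the index $[G:L]=[\langle X'\rangle : K]<\infty$, so $L\leq_f G$. For the intersection, because $\rho$ restricts to the identity on $\langle X'\rangle$, any element $w\in L\cap \langle X'\rangle$ satisfies $w=\rho(w)\in K$; hence $L\cap \langle X'\rangle\leq K$ (in fact equals $K$). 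This establishes the required condition and therefore $\langle X'\rangle \leq_t G$.

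The only genuinely non-routine point is verifying that the map $\rho$ is a well-defined homomorphism, i.e. that it respects the presentation. The main obstacle is thus bookkeeping rather than depth: one must check that killing the generators outside $X'$ is compatible with every commuting relation $[x,y]=1$. This is immediate, since the image of any such relation is either a relation among the surviving generators (when both $x,y\in X'$) or the trivial relation (when at least one of $x,y$ lies outside $X'$). I would state this verification briefly and then assemble the pieces as above, emphasizing that the retraction $\rho$ splits the inclusion $\langle X'\rangle\hookrightarrow G$ and that this splitting is exactly what forces the profinite topology on $G$ to restrict to the full profinite topology on $\langle X'\rangle$.
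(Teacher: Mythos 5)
Your proof is correct, but it follows a genuinely different route from the paper's. The paper argues by induction on $|X|$: it picks $x\in X\setminus X'$, writes $G$ as an HNN extension $\langle H, x \mid [x,y]=1 \ \mbox{for } y\in Y\rangle$ of the smaller right-angled Artin group $H$ on $X\setminus\{x\}$, pulls a finite-index normal subgroup $N$ of $H$ (supplied by the inductive hypothesis) down to the finite-quotient HNN extension $\bar{G}=\langle H/N,\bar{x}\mid [\bar{x},Ny]=1\rangle$, which is virtually free, and takes the preimage of a free finite-index subgroup of $\bar{G}$ — the same virtually-free-quotient device used in Propositions 3.3 and 3.12, and the same inductive skeleton reused in Theorem 3.15. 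You instead invoke the canonical retraction $\rho:G\to\langle X'\rangle$ killing the generators outside $X'$, and verify criterion (ii) for $\leq_t$ directly by taking $L=\rho^{-1}(K)$: since $\rho$ is surjective onto $\langle X'\rangle$, $L\leq_f G$, and since $\rho$ is the identity on $\langle X'\rangle$, $L\cap\langle X'\rangle = K$. All steps check out, including the well-definedness of $\rho$ (each relator $[x,y]$ maps to a relator or to a trivial commutator). Your argument is shorter, needs no induction, and in fact proves a more general statement: \emph{any} retract of \emph{any} group is topologically embedded, the right-angled Artin structure entering only to guarantee that the retraction exists. What the paper's approach buys is uniformity with the rest of its Section 3 machinery and a template for the induction in Theorem 3.16; what yours buys is brevity and a reusable general principle.
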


\begin{proof} The proof is by induction on the cardinality of $X$, the case $|X|=1$ being trivial. Assume $|X|>1$. If $X'=X$, then the conclusion follows at once. Assume $X'\neq X$, and let $x\in X-X'$.  Define $H$ to be the group generated by $X-\{ x \}$ with all of the same relators as $G$ except those involving $x$. Furthermore, let $Y$ be the set of all elements in $X-\{ x\}$ that commute with $x$ in $G$. Then
$$G=\langle H, x\ |\ [x,y]=1\ \mbox{for all $y\in Y$}\rangle.$$
Now let $U\leq_f \langle X'\rangle$. By the inductive hypothesis, $\langle X'\rangle\leq_t H$, which means that $H$ contains a normal subgroup $N$ of finite index such that $N\cap \langle X'\rangle\leq U$. Next consider the group
$$\bar{G}=\langle H/N, \bar{x}\ |\ [\bar{x}, Ny]=1\ \mbox{for all $y\in Y$}\rangle.$$
As an HNN extension of a finite group, $\bar{G}$ is virtually free. Moreover, there is a map $\theta:G\to \bar{G}$ that maps $H$ canonically onto $H/N$ and $x$ to $\bar{x}$. Let $V$ be the inverse image under $\theta$ of a free subgroup of finite index in $\bar{G}$. Then $V\leq_f G$ and $V\cap H\leq N$. Thus $V\cap \langle X'\rangle\leq U$. It follows, then, that $\langle X'\rangle \leq_t G$.
\end{proof}

\begin{theorem} Every right-angled Artin group is in $\mathcal{A}_n$ for all $n\geq 0$.
\end{theorem}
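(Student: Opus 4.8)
The plan is to argue by induction on the cardinality of the generating set $X$, establishing the stronger statement that a right-angled Artin group on a set of generators of a given size lies in $\mathcal{A}_n$ for \emph{all} $n\geq 0$ simultaneously. The base case $|X|\leq 1$ is immediate: such a group is either trivial or infinite cyclic, hence free, and therefore in every $\mathcal{A}_n$ by Proposition 2.3.

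For the inductive step, let $G$ be a right-angled Artin group on $X$ with $|X|>1$, fix a generator $x\in X$, and let $Y$ be the set of those generators in $X-\{x\}$ that commute with $x$. Exactly as in the proof of Lemma 3.14, one presents $G$ as
$$G=\langle H, x\ |\ [x,y]=1\ \mbox{for all $y\in Y$}\rangle,$$
where $H$ is the right-angled Artin group on $X-\{x\}$. The key observation is that this presentation is precisely of the form of the group $\Gamma$ appearing in Theorem 3.13, with base group $H$, stable letter $x$, and distinguished subgroup $\langle Y\rangle$.

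To invoke Theorem 3.13, three hypotheses must be checked. First, $\langle Y\rangle\leq_t H$: since $H$ is a right-angled Artin group with generating set $X-\{x\}$ and $Y\subseteq X-\{x\}$, this is exactly the content of Lemma 3.14 applied to $H$. Second, $H\in\mathcal{A}_n$: as $H$ is a right-angled Artin group on $|X|-1$ generators, this follows from the inductive hypothesis. Third, $\langle Y\rangle\in\mathcal{A}_{n-1}$: here one uses that $\langle Y\rangle$ is itself a right-angled Artin group, namely the one on the generating set $Y$, so that, because $|Y|<|X|$, the inductive hypothesis places it in $\mathcal{A}_m$ for every $m$, and in particular in $\mathcal{A}_{n-1}$. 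With all three hypotheses verified, Theorem 3.13 yields $G\in\mathcal{A}_n$, completing the induction.

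Since the two preceding results carry all the analytic weight, no genuine obstacle remains; the argument is essentially their assembly, and the only points demanding care are bookkeeping in the induction. Specifically, the induction must be run so as to deliver membership in all classes $\mathcal{A}_m$ at once --- this is what permits the subgroup $\langle Y\rangle$ to be placed in $\mathcal{A}_{n-1}$ even though the target conclusion concerns $\mathcal{A}_n$ --- and one must invoke the standard fact that the subgroup generated by a subset of the generators of a right-angled Artin group is again a right-angled Artin group on the induced defining data, which is what licenses both the identification of $H$ and the application of the inductive hypothesis to $\langle Y\rangle$.
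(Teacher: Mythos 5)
Your proposal is correct and follows essentially the same route as the paper's own proof: induction on $|X|$ carrying membership in all classes $\mathcal{A}_n$ simultaneously, splitting off one generator to write $G$ as $\langle H, x \mid [x,y]=1 \ \mbox{for all } y\in Y\rangle$, and then combining Lemma 3.14 (topological embedding of $\langle Y\rangle$ in $H$) with Theorem 3.13. The only cosmetic difference is that the paper first disposes of the relator-free case and chooses a generator appearing in some relator, whereas your argument absorbs that case by allowing $Y=\emptyset$; both versions rest on the same standard fact that subgroups generated by subsets of the standard generators are again right-angled Artin groups.
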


\begin{proof}
The proof is by induction on the number of generators, the case of one generator being trivial.  Let $G$ be a right-angled Artin group with generating set $X$ with $|X|>1$, and assume that every right-angled Artin group with fewer generators than $G$ lies in $\mathcal{A}_n$ for all $n\geq 0$.  If $G$ has no relators, then it is free and thus in $\mathcal{A}_n$ for all $n\geq 0$. Suppose $G$ has at least one relator, say $[x_0,y_0]$. Let $[x_0,y_0], [x_1,y_0], \cdots , [x_l,y_0]$ be a list of all the relators that involve $y_0$. 
 Now define $H$ to be the group generated by $X-\{y_0\}$ with all of the same relators as $G$ except those involving $y_0$.   In view of the inductive hypothesis, $H$ must belong to $\mathcal{A}_n$ for all $n\geq 0$. Moreover, 
$$G=\langle H, y_0\ |\ [x_0,y_0]=[x_1,y_0]=\cdots =[x_l, y_0]=1\rangle.$$
By Lemma 3.14, the subgroup of $H$ generated by $x_0,\cdots , x_l$ is topologically embedded in $H$. 
In addition, as a right-angled Artin group with fewer generators than $G$, this subgroup must belong to $\mathcal{A}_n$ for all $n\geq 0$. Therefore, appealing to Theorem 3.13, we can conclude that $G$ is in the class $\mathcal{A}_n$ for all $n\geq 0$.
\end{proof}

 \section{Ascending HNN extensions that are highly residually finite}

In this section we examine a special type of HNN extension, known as an ascending HNN extension and defined as follows.

\begin{definition}{\rm Let $\phi: G\to G$ be a group monomorphism. The {\it ascending HNN extension} of $G$ with respect to $\phi$, denoted $G_{\phi}$, is given by the presentation

$$G_{\phi} = \langle G, t \ |\ t^{-1}gt = \phi(g)\ \mbox{for all}\ g\in G\rangle.$$
}

\end{definition}

 Ascending HNN extensions have the following elementary property, which is an immediate consequence of the normal form for elements of HNN extensions.

\begin{lemma} Let $\phi:G\to G_{\phi}$ be a group monomorphism. Then every element of $G_{\phi}$ can be expressed in the form $t^kgt^{-l}$, where $g\in G$ and $k, l\in \mathbb Z^+\cup \{0\}$.
\end{lemma}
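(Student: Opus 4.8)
The plan is to give a direct normalization argument rather than to unpack the full normal form theorem. I would set
$$S = \{\, t^k g t^{-l} \ :\ g\in G,\ k,l\in \mathbb{Z}^+\cup\{0\}\,\},$$
the set of elements of the asserted shape, and aim to prove $S = G_{\phi}$. Since $G_{\phi}$ is generated by $G$ together with $t$, its elements are exactly the words in the letters $G\cup\{t,t^{-1}\}$, and since $1 = t^0\,1\,t^0\in S$, it suffices to show that $S$ is closed under right multiplication by each such letter. Indeed, given any word $x_1 x_2\cdots x_n$ in these letters, one starts from $1\in S$ and appends one letter at a time, concluding that the whole word lies in $S$.

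The computation would rest on two identities obtained by rearranging the defining relation $t^{-1}gt=\phi(g)$, namely $gt = t\,\phi(g)$ and $t^{-1}g = \phi(g)\,t^{-1}$, valid for every $g\in G$. Iterating the second gives $t^{-l}g = \phi^l(g)\,t^{-l}$, and this is the point where the hypothesis that $\phi$ maps $G$ into $G$ is essential: it guarantees $\phi^l(g)\in G$, so the manipulation never leaves the base group. This is precisely the feature that distinguishes the \emph{ascending} case and causes the general HNN normal form to collapse to the single shape $t^kgt^{-l}$.

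With these identities in hand, I would check the three closure properties on a typical element $t^k g t^{-l}\in S$. Right multiplication by $g'\in G$ is handled by pushing $g'$ to the left, $t^k g t^{-l} g' = t^k g\,\phi^l(g')\,t^{-l} = t^k\big(g\,\phi^l(g')\big)t^{-l}\in S$, using that $g\,\phi^l(g')\in G$. Right multiplication by $t^{-1}$ is simply absorbed, $t^k g t^{-l} t^{-1} = t^k g t^{-(l+1)}\in S$. Right multiplication by $t$ splits into two cases: if $l\ge 1$ then $t^k g t^{-l} t = t^k g t^{-(l-1)}\in S$, while if $l=0$ then $gt = t\,\phi(g)$ yields $t^k g t = t^{k+1}\phi(g)\in S$. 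These cases are exhaustive, so $S$ is closed under right multiplication by every generator and its inverse, giving $S=G_{\phi}$.

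I expect no genuine obstacle here; the argument is entirely formal once the two commutation identities are recorded. The only place demanding a little care is the $l=0$ subcase of multiplication by $t$, where the positive exponent increases and the group element is simultaneously replaced by its $\phi$-image, so one must track both bookkeeping effects at once; every other step is pure cancellation. As an alternative one could simply cite Britton's lemma and the normal form for HNN extensions, but the closure argument above is shorter and self-contained.
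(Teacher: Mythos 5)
Your proof is correct, and it takes a different route from the paper: the paper offers no argument at all, stating the lemma as ``an immediate consequence of the normal form for elements of HNN extensions'' (i.e.\ it leans on Britton's lemma and the structure theory of HNN extensions), whereas you give a self-contained induction on word length, showing that the set $S=\{t^kgt^{-l}\}$ contains $1$ and is closed under right multiplication by every generator and inverse, using only the rearranged relations $gt=t\phi(g)$ and $t^{-1}g=\phi(g)t^{-1}$. Your case analysis ($g'\in G$; $t^{-1}$; $t$ with $l\geq 1$ versus $l=0$) is complete and each computation checks out. What your approach buys: it isolates the fact that this is purely an \emph{existence} statement needing no normal-form machinery (Britton's lemma is really about uniqueness/non-triviality of reduced words), and as a bonus your argument never uses injectivity of $\phi$, so it proves the statement for an arbitrary endomorphism. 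What the paper's citation buys: brevity, and it makes visible the structural reason the general HNN normal form collapses here --- in a reduced word the associated subgroup on one side is all of $G$, so no $t^{-1}$ can ever be followed by $t$, forcing the exponent pattern $+\cdots+\,-\cdots-$, after which the same commutation identities you use finish the collapse. Either justification is acceptable; yours is the one a reader can verify without opening Ribes--Zalesskii or Lyndon--Schupp.
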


An ascending HNN extension of a residually finite group may not be residually finite; see, for instance, the examples described in 
\cite{sapwise} as well as [{\bf 8}, Theorem 1.2]. However, as established in \cite{sapir, rhem, wise}, ascending HNN extensions of some important special types of  
residually finite groups are always residually finite. Among these special types are finitely generated free groups, whose ascending HNN extensions are investigated in \cite{sapir} using methods from algebraic geometry.

\begin{theorem}{\rm (A. Borisov, M. Sapir)} If $G$ is a finitely generated free group and $\phi:G\to G$  a monomorphism, then $G_{\phi}$ is residually finite.
\end{theorem}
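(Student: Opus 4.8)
The plan is to prove residual finiteness by separating an arbitrary nontrivial element of $G_\phi$ from the identity in a finite quotient, and to reduce this to a single clean core case. Using the retraction $\pi:G_\phi\to\mathbb{Z}$ that sends $t\mapsto 1$ and kills $G$, together with the normal form of Lemma 4.2, write a given $w\neq 1$ as $w=t^kgt^{-l}$ with $g\in G=F_n$ and $k,l\geq 0$. If $k\neq l$, then $\pi(w)\neq 0$ and $w$ survives in a finite cyclic quotient. If $k=l$, then $w=t^kgt^{-k}$ is conjugate to $g$, so $w\neq 1$ forces $g\neq 1$, and any finite quotient in which $g$ survives also detects $w$. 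Thus everything reduces to the statement: for each $1\neq g\in F_n$, produce a homomorphism from $G_\phi$ onto a finite group that does not kill $g$.

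The second step is a soft mechanism for manufacturing such finite quotients. Suppose I can find a finite-index, $\phi$-invariant subgroup $K\unlhd F_n$ (so $\phi(K)\subseteq K$) for which the induced endomorphism $\bar\phi$ of the finite group $Q=F_n/K$ is an automorphism and $g\notin K$. Then $x\mapsto xK$ and $t\mapsto\bar\phi^{-1}$ define a homomorphism $G_\phi\to Q\rtimes\langle\bar\phi\rangle$ onto a finite group whose value at $g$ is nontrivial. Such a $K$ can be extracted from a finite-field representation that is periodic up to conjugacy: if $\rho:F_n\to GL_N(\mathbb{F}_q)$ satisfies $\rho\circ\phi^m\sim\rho$ (conjugate) for some $m\geq 1$ and $\rho(g)\neq 1$, set $K=\bigcap_{j=0}^{m-1}\ker(\rho\circ\phi^j)$. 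Writing $K_j=\ker(\rho\circ\phi^j)$, conjugacy forces $K_{j+m}=K_j$ for all $j$; one then checks $\phi(K_j)\subseteq K_{j-1}$, hence $\phi(K)\subseteq K$, while injectivity of $\phi$ makes $\bar\phi$ injective and so an automorphism of $Q$, and $\rho(g)\neq 1$ gives $g\notin K_0$, hence $g\notin K$ since $K\subseteq K_0$. So the entire theorem rests on producing a finite-field representation of $F_n$ that is exactly conjugacy-periodic under precomposition by $\phi$ while still detecting $g$.

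The third step, the geometric heart, produces this representation. Fix a faithful embedding $F_n\hookrightarrow GL_N(\mathbb{Z})$ and regard $\mathrm{Hom}(F_n,GL_N)\cong GL_N^{\,n}$ as an affine scheme over $\mathbb{Z}$; since $F_n$ is free, precomposition by $\phi$ is a polynomial self-map $F$ of this scheme, given by the words $\phi(x_1),\dots,\phi(x_n)$. Reducing modulo a prime and passing to $\bar{\mathbb{F}}_p$, one iterates $F$: over a finite field there are only finitely many conjugacy classes of representations, so every orbit is eventually periodic. What is required is a genuinely periodic point, with no pre-period, lying off the proper closed subset $\{\rho:\rho(g)=1\}$. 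Here the Borisov-Sapir input enters: because $\phi$ is injective the induced map $F$ is dominant, and a dominant self-map of a variety over $\bar{\mathbb{F}}_p$ has a Zariski-dense set of periodic points (a Frobenius/finite-field density theorem), after which a Lang-Weil point count secures a periodic point defined over some $\mathbb{F}_q$ that avoids the locus where $g$ dies.

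The main obstacle is exactly this last step: the existence of an \emph{exactly} periodic representation that still detects $g$. Eventual periodicity over a finite field is automatic and cheap, but the pre-period genuinely obstructs turning $\phi$ into an automorphism of a finite quotient, so one cannot avoid proving density of periodic points for the dominant polynomial map $F$ over $\bar{\mathbb{F}}_p$ and controlling these points relative to the subvariety on which $g$ becomes trivial. Establishing dominance of $F$ from the injectivity of $\phi$, and the density of periodic points of dominant maps in positive characteristic, are the technically serious ingredients; by contrast, the reductions in the first two paragraphs are comparatively routine.
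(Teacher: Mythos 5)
A preliminary remark on what you are being compared against: the paper does not prove this theorem at all---it is imported from Borisov and Sapir's paper (reference [{\bf 1}]), so your proposal must be judged as a reconstruction of their argument. Your first two steps are correct and do match their strategy. The reduction via the retraction $G_\phi\to\mathbb{Z}$ and conjugacy to the problem of separating a nontrivial $g\in G$ is fine, and so is the mechanism turning a conjugacy-periodic finite representation ($\rho\circ\phi^m\sim\rho$, $\rho(g)\neq 1$) into a finite quotient $Q\rtimes\langle\bar\phi\rangle$ of $G_\phi$ that detects $g$. One local correction: the injectivity of $\bar\phi$ on $Q=F_n/K$ does \emph{not} come from injectivity of $\phi$ (which is neither sufficient for nor used in that step); it comes from periodicity of the kernels, since $\phi^{-1}(K_j)=K_{j+1}$ and $K_m=K_0$ give $\phi^{-1}(K)=K$.

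The genuine gap is in your third step, exactly at the point you describe as the ``Borisov--Sapir input'': the assertion that injectivity of $\phi$ makes the induced self-map $F$ of $\mathrm{Hom}(F_n,GL_N)$ dominant is false. Take $\phi(x)=x$, $\phi(y)=y^{-1}xy$ on $F_2$; this is injective, since $x$ and $y^{-1}xy$ freely generate a subgroup, yet the induced map $(A,B)\mapsto(A,B^{-1}AB)$ has image inside the proper closed subvariety of pairs of conjugate matrices (for $SL_2$, inside $\{\mathrm{tr}\,U=\mathrm{tr}\,V\}$; for $GL_N$, inside the locus of equal characteristic polynomials), so it is not dominant. This is precisely why Borisov and Sapir's geometric theorem is \emph{not} a statement about dominant maps: they prove density of quasi-fixed points ($F(\rho)=\mathrm{Frob}^s(\rho)$) for an arbitrary self-map inside the stable image $Y=\bigcap_k\overline{F^k(X)}$---equivalently, one first restricts to $Y$, where $F$ \emph{is} dominant, and applies the density theorem there. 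In that corrected formulation, injectivity of $\phi$ plays an entirely different role from the one you assign it: it guarantees $\phi^k(g)\neq 1$ for every $k$, and since no nontrivial word is an identity of $SL_2(\bar{\mathbb{F}}_p)$ (alternatively, choose $p$ via a faithful integral representation and a Lang--Weil count), each $\overline{F^k(X)}$ meets $\{\rho:\rho(g)\neq 1\}$, so the stable image is not contained in the locus where $g$ dies; one then finds a quasi-fixed point of $F|_Y$ off that locus. Without this repair your argument fails outright for endomorphisms like the one above. The rest of your step three is salvageable once repaired: quasi-fixed points over a finite field are automatically genuinely periodic (because $F$ commutes with Frobenius and every $\bar{\mathbb{F}}_p$-point is Frobenius-periodic), so they feed directly into your step-two mechanism.
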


Our objective in this section is to strengthen the above result by showing that ascending HNN extensions of finitely generated free groups are actually highly residually finite. In addition, we consider ascending HNN extensions of polycyclic groups, proving that they, too, are highly residually finite.  Finally, we generalize these two results by establishing the highly residual finiteness of certain ascending HNN extensions of groups possessing a normal series whose factor groups are each either free, polycyclic or finite. The techniques we employ in this section eschew any explicit reference to cohomology; instead we rely on the third characterization of highly residually finite groups given in Theorem 2.10.

We begin with the following elementary property of subgroups of finite index in a finitely generated group.

\begin{lemma} Let $G$ be a finitely generated group, $N\unlhd_{f}G$ and $\phi:G\to G$ a homomorphism. Then $N$ contains a subgroup $M\unlhd_f G$ such that $\phi(M)\leq M$.
\end{lemma}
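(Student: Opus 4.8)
The plan is to exploit the finite generation of $G$ to reduce the problem to the finitely many subgroups of a fixed finite index. First I would fix $N\unlhd_f G$ and set $d=[G:N]$. Since $G$ is finitely generated, for each positive integer $m$ there are only finitely many subgroups of $G$ of index at most $m$; in particular there are only finitely many subgroups of index $d$. Let $N_1,N_2,\dots,N_s$ be the complete list of subgroups of $G$ of index exactly $d$, and put
\[
M=\bigcap_{j=1}^{s}N_j .
\]
Then $M$ is a normal (indeed characteristic) subgroup of finite index in $G$, and $M\leq N$ since $N$ appears among the $N_j$.

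The key point is that $M$ is invariant under every endomorphism of $G$, and in particular under $\phi$. Indeed, if $\phi:G\to G$ is any homomorphism, then for each $j$ the preimage $\phi^{-1}(N_j)$ is a subgroup of $G$ whose index satisfies $[G:\phi^{-1}(N_j)]\leq [G:N_j]=d$. I would note that although $\phi$ need not be surjective here, the image $\phi(G)$ has the property that $\phi^{-1}(N_j)\supseteq \phi^{-1}(N_j\cap \phi(G))$, and the correspondence $g\mapsto \phi(g)$ identifies $G/\phi^{-1}(N_j)$ with a subgroup of $G/N_j$, so the index is at most $d$. Since $M$ is the intersection of \emph{all} subgroups of index exactly $d$, I would instead work with the slightly larger characteristic subgroup
\[
M=\bigcap\{\,L\leq G : [G:L]\leq d\,\},
\]
which is still of finite index (a finite intersection, by finite generation) and still contained in $N$. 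For this $M$ the invariance is immediate: for any $g\in M$ and any subgroup $L$ with $[G:L]\leq d$, the subgroup $\phi^{-1}(L)$ also has index at most $d$, so $g\in\phi^{-1}(L)$, whence $\phi(g)\in L$; as $L$ was arbitrary, $\phi(g)\in M$. Thus $\phi(M)\leq M$.

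I would conclude by assembling these observations: $M$ is normal of finite index because it is a finite intersection of finite-index subgroups, it is contained in $N$ because $N$ is one of the subgroups being intersected (as $[G:N]\leq d$), and $\phi(M)\leq M$ by the invariance argument above. This gives exactly the subgroup required by the statement.

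The only genuine obstacle is the handling of the non-surjectivity of $\phi$: one must be careful that $\phi^{-1}(L)$ has index \emph{at most} $d$ rather than exactly $d$, which is why I pass to the intersection of all subgroups of index $\leq d$ rather than those of index exactly $d$. Everything else is a routine application of the fact that a finitely generated group has only finitely many subgroups of each bounded index, so that the canonical characteristic subgroup $M$ is of finite index and is preserved by every endomorphism.
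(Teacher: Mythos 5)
Your proof is correct, but your construction differs from the paper's. The paper takes $M=\bigcap_{i\geq 0}\phi^{-i}(N)$, the largest $\phi$-invariant subgroup of $N$: normality, containment in $N$, and $\phi$-invariance are immediate, and finiteness of the index follows because each $\phi^{-i}(N)$ has index at most $[G:N]$, so by finite generation only finitely many distinct subgroups occur among the $\phi^{-i}(N)$ and the intersection is really a finite one. You instead intersect \emph{all} subgroups of index at most $d=[G:N]$, obtaining a subgroup that is not merely $\phi$-invariant but fully invariant (preserved by every endomorphism of $G$), since the preimage under any endomorphism of a subgroup of index at most $d$ again has index at most $d$. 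Both arguments run on the same two facts --- a finitely generated group has only finitely many subgroups of bounded index, and preimages under homomorphisms do not increase index --- but your $M$ is canonical and independent of $\phi$, which is a genuinely stronger conclusion, at the price of a potentially much smaller subgroup (worse index control); the paper's $\phi$-tailored construction is more economical and is the one reused later (the subgroups $\phi^{-i}(N)$ reappear in the proof of Proposition 4.5). One remark on the write-up: your first attempt, intersecting only the subgroups of index \emph{exactly} $d$, would indeed fail for exactly the reason you noticed --- $\phi^{-1}(L)$ may have index strictly smaller than $d$ and so need not be among the subgroups intersected --- and your switch to index $\leq d$ is the right fix; in a final version you should present only the corrected construction.
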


\begin{proof} For each nonnegative integer $i$, let
$$\phi^{-i}(N)=\{ x\in G : \phi^i(N)\in N \},$$
where $\phi^0$ is understood to be the identity map from $G$ to $G$.
Now set $M=\bigcap_{i=0}^{\infty}\phi^{-i}(N)$. It is easy to see that $M\leq N$, $M\unlhd G$ and $\phi(M)\leq M$. All that remains to be shown, then, is that
$[G:M]<\infty$. To establish this, we first observe that, for each $i\geq 0$, $[G:\phi^{-i}(N)]\leq [G : N].$ Since 
$G$ possesses only finitely many subgroups with index $\leq [G:N]$, it follows that there are only finitely many subgroups of the 
form  $\phi^{-i}(N)$ for $i\geq 0$. Therefore, $M$, as the intersection of finitely many subgroups with finite index, has finite index.
\end{proof}

Our analysis of ascending HNN extensions is based on the following property of a group endomorphism.

\begin{definition}{\rm  A group endomorphism $\phi:G\to G$ has property $\mathfrak{P}$ if, for every $g\in G$, there exists $N\unlhd_{f}G$ such that,  for $i=0,1,\cdots$,  $$\phi^i(g)\in N \Leftrightarrow \phi^i(g)=1.$$}
\end{definition}

For injective endomorphisms the above property can be expressed as follows. 

\begin{lemma} A group monomorphism $\phi:G\to G$ has property $\mathfrak{P}$ if and only if, for every nontrivial element $g$ in $G$, there exists $N\lhd_{f}G$ such that $\phi^i(g)\notin N$ for all $i\geq 0$. 
\end{lemma}

The significance of property $\mathfrak{P}$ for the study of ascending HNN extensions is revealed by the following proposition.

\begin{proposition} Let $G$ be a finitely generated group and $\phi:G\to G$ a monomorphism. The group $G_{\phi}$ is residually finite if and only if $\phi$ has property $\mathfrak{P}$. 
\end{proposition}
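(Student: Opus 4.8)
The plan is to prove the two implications separately, treating the forward direction (residual finiteness of $G_\phi$ implies property $\mathfrak{P}$) as the routine one and the converse as the substantive one. For the forward direction I would start from a nontrivial $g\in G$, which remains nontrivial in $G_\phi$ since $G$ embeds there, and use residual finiteness to pick a finite quotient $\pi:G_\phi\to Q$ with $\pi(g)\neq 1$. The defining relation gives $\phi^i(g)=t^{-i}gt^i$, so $\pi(\phi^i(g))=\pi(t)^{-i}\pi(g)\pi(t)^i$ is a conjugate of $\pi(g)$ and hence nontrivial for every $i\geq 0$. Setting $N=\ker\pi\cap G\unlhd_f G$ then shows $\phi^i(g)\notin N$ for all $i$, which is exactly the reformulation of $\mathfrak{P}$ supplied by Lemma 4.6.

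For the converse I would separate an arbitrary nontrivial $w\in G_\phi$ from the identity in a finite quotient, using the normal form $w=t^kgt^{-l}$ of Lemma 4.2. If $k\neq l$, the $t$-exponent homomorphism $G_\phi\to\mathbb{Z}$ (killing $G$, sending $t\mapsto 1$) sends $w$ to $k-l\neq 0$, which survives in $\mathbb{Z}/m$ for suitable $m$. If $k=l$, then $w=t^kgt^{-k}$ is conjugate to $g$, and $w\neq 1$ forces $g\neq 1$; since the image of a conjugate of a nontrivial element of a finite quotient is again nontrivial, it suffices to separate $g\in G\setminus\{1\}$. Thus everything reduces to the single task of finding a finite quotient of $G_\phi$ in which a prescribed nontrivial $g\in G$ survives.

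The core construction addresses this task. Given $g\neq 1$, property $\mathfrak{P}$ (Lemma 4.6) supplies $N\unlhd_f G$ with $\phi^i(g)\notin N$ for all $i\geq 0$, and Lemma 4.4 lets me shrink $N$ to a $\phi$-invariant $M\unlhd_f G$ with $M\leq N$ and $\phi(M)\leq M$; since $M\leq N$, still $\phi^i(g)\notin M$ for all $i$. Now $\phi$ induces an endomorphism $\bar\phi$ of the finite group $G/M$. Because $G/M$ is finite, the descending chain of images $\bar\phi^i(G/M)$ stabilizes at some $Q=\bar\phi^s(G/M)$ on which $\bar\phi$ restricts to an automorphism $\alpha$ of finite order $m$. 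I would then form the finite mapping-torus group $\Phi=Q\rtimes\langle c\mid c^m\rangle$, with $c$ acting on $Q$ by $\alpha$ arranged so that $c^{-1}qc=\alpha(q)$, and define $\rho:G_\phi\to\Phi$ by $\rho(t)=c$ and $\rho(x)=\phi^s(x)M\in Q$ for $x\in G$. The relation $t^{-1}xt=\phi(x)$ is respected because $\bar\phi^{s+1}(xM)=\alpha(\phi^s(x)M)$, so $\rho$ is a well-defined homomorphism into a finite group; and $\rho(g)=\phi^s(g)M\neq 1$ precisely because $\phi^s(g)\notin M$.

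The main obstacle, and the place where care is needed, is the construction of $\rho$: conjugation by the invertible element $t$ must implement $\phi$, yet $\phi$ descends only to an endomorphism of $G/M$, not an automorphism. The resolution is the stabilization step, passing to the eventual image $Q$ on which $\bar\phi$ becomes the automorphism $\alpha$, which is what makes a finite semidirect product available; this is also the only point where finite generation of $G$ is used, via Lemma 4.4 to produce the $\phi$-invariant $M$. The remaining verifications---that $\rho$ respects the presentation and that $\rho(g)$ is nontrivial---are then immediate from the invariance of $M$ and the choice of $N$.
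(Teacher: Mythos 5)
Your proposal is correct, and your forward direction is the same as the paper's (pull a normal finite-index subgroup of $G_\phi$ back to $N=M\cap G$ and use conjugation by $t$). Your converse also shares the paper's overall architecture---normal form $t^kgt^{-l}$, the $t$-exponent map onto $\mathbb{Z}$ for one case, and the paper's Lemma 4.3 to produce a $\phi$-invariant $M\unlhd_f G$ avoiding the $\phi$-orbit of $g$---but the finishing construction is genuinely different. The paper makes the induced endomorphism invertible by \emph{enlarging} the subgroup: it replaces $M$ by $\bigcup_{i\geq 0}\phi^{-i}(N)$, so that $\phi$ induces an automorphism of the finite quotient $G/M$, then forms $G/M\rtimes\langle\bar t\rangle$ with $\bar t$ of \emph{infinite} order and takes $\theta^{-1}(\langle\bar t\rangle)$ as a finite-index subgroup of $G_\phi$ missing $x$; this handles every $x=t^kgt^{-l}$ with $g\neq 1$ uniformly, so only the pure powers $t^n$ need the $\mathbb{Z}$-quotient. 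You instead make the endomorphism invertible by \emph{shrinking} the group: you pass to the eventual image $Q=\bar\phi^s(G/M)$, on which $\bar\phi$ restricts to an automorphism $\alpha$ of finite order $m$, and build the finite group $Q\rtimes\mathbb{Z}/m$, so that $\rho(t)=c$ and $\rho(x)=\phi^s(x)M$ give an honest finite quotient of $G_\phi$ in which $g$ survives (your relation check and the nontriviality of $\rho(g)$ are both right, since $M\leq N$). The price is your extra reduction step splitting off $k\neq l$ via the exponent map and $k=l$ via conjugacy into $G$, though in fact your $\rho$ already kills nothing of the form $t^kgt^{-l}$ with $g\neq 1$, since an element $q\,c^j$ of a semidirect product is trivial only if $q=1$. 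These are the two standard ways to invert an endomorphism of a finite group---quotient by the eventual kernel versus restrict to the eventual image---and both work here; yours has the aesthetic advantage of exhibiting residual finiteness directly by finite quotients, while the paper's avoids the stabilization step and the appeal to the finite order of the automorphism.
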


\begin{proof} ($\Longrightarrow$) Assume $g\in G$ with $g\neq 1$. Let $M\lhd_{f}G_{\phi}$ such that $g\notin M$. Set $N=M\cap G$. If $\phi^i(g)\in N$ for some $i\geq 0$, then $t^{-i}gt^i=\phi^i(g)\in N\leq M$, which implies that $g\in N$, a contradiction. Hence $\phi^i(g)\notin N$ for all $i\geq 0$. Therefore, $\phi$ has the desired property.   
 \vspace{10pt}

($\Longleftarrow$)  Let $x\in G_{\phi}$ with $x\neq 1$. We require a subgroup of finite index in $G_{\phi}$ that misses $x$. By Lemma 4.1,  $x=t^kgt^{-l}$ for some $g\in G$, where $k$ and $l$ are nonnegative integers.  First we dispose of the case when $g=1$, i.e., $x=t^n$ for $n\in \mathbb Z-\{0\}$. Let $\epsilon : G_{\phi}\to \mathbb Z$ be the epimorphism that maps $G$ to $\{0\}$ and $t$ to $1$.  Then, if $A\leq_f \mathbb Z$ such that $n\notin A$, we can take $\epsilon^{-1}(A)$ as the desired subgroup. Next we treat the really serious case, namely, when $g\neq 1$. By Lemmas 4.4 and 4.3, we can find $N\lhd_f G$ such that $\phi(N)\leq N$ and $\phi^{i}(g)\notin N$ for $i\geq 0$. Let $M=\bigcup_{i=0}^{\infty}\phi^{-i}(N)$, where $\phi^{-i}(N)$ is as defined above in the proof of Lemma 4.3. Then $g\notin M$, $M\lhd_f G$ and $\phi(M)\leq M$. In addition, the map $G/M\to G/M$ induced by $\phi$ is an isomorphism. Hence we can form the semidirect product $G/M\rtimes \langle \bar{t}\rangle$ in which $\bar{t}$ has infinite order and $\bar{t}^{-1}(My)\bar{t}=M\phi(y)$ for all $y\in G$. Moreover, we have a map $\theta:G_{\phi}\to G/M\rtimes \langle \bar{t}\rangle$ that maps $G$ canonically onto $G/M$ and $t$ to $\bar{t}$. Now let $H= \theta^{-1}(\langle \bar{t}\rangle)$. Then $H\leq_f G_{\phi}$ and $g\notin H$. Also, $t\in H$, so that $x\notin H$, thus allowing $H$ to serve as the required subgroup.
\end{proof}

In order to detect the above group-theoretic property,  it is desirable to understand when this property is inherited by extensions. In this respect the following lemma will prove useful.

\begin{lemma} Assume $N\rightarrowtail G\twoheadrightarrow Q$ is a group extension such that $N$ is finitely generated and residually finite and $Q$ is highly residually finite. Let $\phi:G\to G$ be a homomorphism such that $\phi(N)\leq N$. If the homomorphisms $Q\to Q$ and $N\to N$ induced by $\phi$ both possess property $\mathfrak{P}$, then $\phi$ must also have property $\mathfrak{P}$ . 
\end{lemma}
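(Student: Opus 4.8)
The plan is to fix $g\in G$ and manufacture a single subgroup $N'\unlhd_f G$ with $\phi^i(g)\in N'\iff \phi^i(g)=1$ for all $i\geq 0$, by analyzing the orbit $\{\phi^i(g)\}$ separately in the quotient $Q$ and in $N$. Write $\epsilon:G\twoheadrightarrow Q$ for the quotient map. Since $\phi(N)\leq N$, the map $\phi$ descends to an endomorphism $\phi_Q$ of $Q$ satisfying $\epsilon\circ\phi=\phi_Q\circ\epsilon$, and restricts to $\phi_N:=\phi|_N$ on $N$; these are the two endomorphisms assumed to have property $\mathfrak{P}$.

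First I would apply property $\mathfrak{P}$ of $\phi_Q$ to $\bar g:=\epsilon(g)$, obtaining $\bar N\unlhd_f Q$ with $\phi_Q^i(\bar g)\in\bar N\iff\phi_Q^i(\bar g)=1$. Setting $N_Q:=\epsilon^{-1}(\bar N)\unlhd_f G$, the relation $\epsilon(\phi^i(g))=\phi_Q^i(\bar g)$ translates this into $\phi^i(g)\in N_Q\iff\phi^i(g)\in N$ for every $i$. Thus $N_Q$ already detects every iterate lying \emph{outside} $N$ (such an iterate is automatically nontrivial), and in particular settles completely the case in which the orbit never meets $N$, where $N':=N_Q$ already works.

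It remains to treat the iterates lying inside $N$. The decisive structural point is that, because $\phi(N)\leq N$, once the orbit enters $N$ it never leaves: if $i_0$ is the least index with $\phi^{i_0}(g)\in N$ and $h:=\phi^{i_0}(g)$, then $\phi^i(g)=\phi_N^{\,i-i_0}(h)$ for all $i\geq i_0$, so the tail of the orbit is exactly the $\phi_N$-orbit of $h$. I would then apply property $\mathfrak{P}$ of $\phi_N$ to $h$, obtaining $N_N\unlhd_f N$ that detects the nontrivial terms of this tail. The obstacle is that $N_N$ has finite index in $N$ while $N$ may have infinite index in $G$, so $N_N$ is by itself useless for producing a finite-index subgroup of $G$; this finite-index upgrade is the heart of the argument.

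I would resolve it in three moves. Since $N$ is finitely generated it has only finitely many subgroups of any fixed index, so $N_N$ contains a characteristic subgroup $M$ of $N$ of finite index; as $N\unlhd G$, $M$ is normal in $G$, and because $M\leq N_N$, $M$ still detects the nontrivial tail terms ($\phi_N^{\,j}(h)\neq 1$ forces $\phi_N^{\,j}(h)\notin N_N$, hence $\notin M$). Next, $G/M$ sits in an extension of the highly residually finite group $Q$ by the finite group $N/M$, so Theorem 2.10(iii) makes $G/M$ residually finite. Since the tail lies in the finite group $N/M$, the images $M\phi^i(g)$ for $i\geq i_0$ form a finite set, whose nonidentity members (those coming from nontrivial iterates) are finitely many nonidentity elements of $G/M$; residual finiteness of $G/M$ then furnishes $\bar N''\unlhd_f G/M$ avoiding all of them, and its preimage $N''\unlhd_f G$ detects the whole tail. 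Finally I would set $N':=N_Q\cap N''$, which is normal of finite index in $G$: for $\phi^i(g)\notin N$ the factor $N_Q$ forces $\phi^i(g)\notin N'$, while for $\phi^i(g)\in N$ nontrivial the factor $N''$ does, so $\phi^i(g)\in N'\Rightarrow\phi^i(g)=1$, the converse being automatic. The main difficulty throughout is precisely this passage from $N_N\leq_f N$ to a finite-index subgroup of $G$, which is exactly where finite generation of $N$ and the high residual finiteness of $Q$ are used.
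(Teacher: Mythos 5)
Your proof is correct, and its overall skeleton coincides with the paper's: split the orbit $\{\phi^i(g)\}$ according to membership in $N$, use property $\mathfrak{P}$ of the induced endomorphism of $Q$ for the iterates outside $N$ and of the restriction to $N$ for the tail inside $N$, then promote a finite-index subgroup of $N$ to a finite-index subgroup of $G$ using the strength of the hypothesis on $Q$. Where you genuinely differ is in that promotion step, which you rightly identify as the heart of the matter. The paper does it in one line: since $Q\in\mathcal{A}_2$ and $N$ is finitely generated, $N$ is topologically embedded in $G$ (Proposition 2.4 and the surrounding discussion), so the subgroup $M\lhd_f N$ supplied by property $\mathfrak{P}$ of $\phi|_N$ is caught by some $U_1\lhd_f G$ with $U_1\cap N\leq M$, which automatically excludes every nontrivial tail element. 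You never invoke the topological embedding: instead you shrink $N_N$ to a characteristic finite-index subgroup $M$ of $N$ (legitimate because $N$ is finitely generated), observe $M\unlhd G$, apply Theorem 2.10(iii) to the extension $N/M\rightarrowtail G/M\twoheadrightarrow Q$ to conclude that $G/M$ is residually finite, and separate the finitely many nonidentity cosets $M\phi^i(g)$. This is in effect an inline re-derivation, specialized to the orbit at hand, of the mechanism used in the paper to prove (iii)$\Rightarrow$(i) of Theorem 2.10; it keeps the argument in elementary residual-finiteness language at the cost of a few extra moves, whereas the paper's appeal to $N\unlhd_t G$ is shorter because it cites that machinery wholesale. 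A second, minor difference: you handle all iterates outside $N$ uniformly via $\epsilon^{-1}(\bar N)$, which also covers the finite ``head'' of an orbit that eventually enters $N$, while the paper treats that head (and eventually-trivial orbits) separately using plain residual finiteness of $G$, obtained from Theorem 2.10(ii). Both implementations are sound.
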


\begin{proof}  Let $g\in G$ with $g\neq 1$.  We require a normal subgroup of finite index that misses all the nontrivial elements of the form $\phi^i(g)$ for $i\geq 0$. If $\phi^k(g)=1$ for some $k>0$, then $\phi^i(g)=1$ for all $i\geq k$, which means that the existence of such a subgroup follows immediately from the residual finiteness of $G$. Thus we only need consider the case when $\phi^i(g)\neq 1$ for all $i\geq 0$. First suppose that
 $\phi^i(g)\in N$ for some $i\geq 0$, and let $k$ be the smallest nonnegative integer such that $\phi^k(g)\in N$. Then, because $\phi$ induces a map with property $\mathfrak{P}$ on $N$, there exists $M\lhd_{f}N $ such that $\phi^i(g)\notin M$ for $i\geq k$. Moreover, since $Q$ lies in $\mathcal{A}_2$, we can find $U_1\lhd_{f}G$ such that $U_1\cap N\leq M$. Therefore, $\phi^i(g)\notin U_1$ for all $i\geq k$. If $k=0$, then we are done. For $k>0$ we can invoke the residual finiteness of $G$ to obtain $U_2\lhd_{f}G$ such that $\phi^i(g)\notin U_2$ for $0\leq i<k$. Then, setting $U=U_1\cap U_2$,  we have $U\lhd_{f}G$ and $\phi^i(g)\notin U$ for all $i\geq 0$. Finally, in the case where $\phi^i(g)\notin N$ for all $i\geq 0$, we can apply the hypothesis that the map $Q\to Q$ induced by $\phi$ has property $\mathfrak{P}$ in order to produce a subgroup of  $G$ with the desired properties.
\end{proof}

Lemma 4.6 can be used to prove a result analogous to Proposition 4.5 that provides a way to determine when an ascending HNN extension of a highly residually finite group is itself highly residually finite.

\begin{proposition} Let $G$ be a finitely generated highly residually finite group. If $\phi:G\to G$ is a monomorphism with property $\mathfrak{P}$, then $G_{\phi}$ is highly residually finite.
\end{proposition}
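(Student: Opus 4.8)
The plan is to verify condition (iii) of Theorem 2.10 for $G_{\phi}$: that for every extension $F\rightarrowtail E\stackrel{\pi}{\twoheadrightarrow} G_{\phi}$ with $F$ finite, the group $E$ is residually finite. So I would fix such an extension, set $\tilde{G}=\pi^{-1}(G)$, and choose $s\in E$ with $\pi(s)=t$. Since $\pi$ restricts to a surjection $\tilde{G}\twoheadrightarrow G$ with finite kernel $F$ and $G$ is finitely generated, $\tilde{G}$ is finitely generated; moreover, by Corollary 2.11 (applied to $F\rightarrowtail\tilde{G}\twoheadrightarrow G$) it is highly residually finite. Because $t^{-1}gt=\phi(g)\in G$ for every $g\in G$, one has $\pi(s^{-1}\tilde{g}s)=\phi(\pi(\tilde{g}))\in G$ for all $\tilde{g}\in\tilde{G}$, so conjugation by $s$ restricts to a monomorphism $\psi:\tilde{G}\to\tilde{G}$, $\psi(\tilde{g})=s^{-1}\tilde{g}s$. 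Note that $E=\langle\tilde{G},s\rangle$, since the images of $\tilde{G}$ and $s$ generate $G_{\phi}$ and $F=\ker\pi\leq\tilde{G}$. Thus $(\tilde{G},s,\psi)$ is the natural candidate to realize $E$ as an ascending HNN extension of $\tilde{G}$.

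First I would show that $\psi$ has property $\mathfrak{P}$. By the displayed identity above, $\psi$ induces $\phi$ on the quotient $\tilde{G}/F\cong G$, and since $F$ is normal in $E$ it restricts to an automorphism of $F$; every endomorphism of a finite group has property $\mathfrak{P}$ (take the witnessing subgroup to be trivial in the definition). I would then invoke Lemma 4.6 for the extension $F\rightarrowtail\tilde{G}\twoheadrightarrow G$ and the homomorphism $\psi$: its hypotheses are met because $F$ is finite (hence finitely generated and residually finite), $G$ is highly residually finite, $\psi(F)\leq F$, and the induced maps $\phi$ on $G$ and $\psi|_F$ on $F$ both have property $\mathfrak{P}$. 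The conclusion is that $\psi:\tilde{G}\to\tilde{G}$ has property $\mathfrak{P}$.

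It remains to identify $E$ with $\tilde{G}_{\psi}$. The relations $\tau^{-1}\tilde{g}\tau=\psi(\tilde{g})$ hold under $\tau\mapsto s$, so there is a surjection $\rho:\tilde{G}_{\psi}\to E$ restricting to the identity on $\tilde{G}$; composing with $\pi$ gives the map $q:\tilde{G}_{\psi}\to G_{\phi}$ that is $\pi|_{\tilde{G}}$ on $\tilde{G}$ and sends $\tau\mapsto t$. Using the normal form of Lemma 4.1 to write an arbitrary element of $\tilde{G}_{\psi}$ as $\tau^{k}\tilde{g}\tau^{-l}$, together with $\psi(F)=F$ (which makes $F$ normal in $\tilde{G}_{\psi}$), one checks that $\ker q=F$: indeed $q(\tau^{k}\tilde{g}\tau^{-l})=1$ forces $k=l$ and $\pi(\tilde{g})=1$. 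Since $\pi\circ\rho=q$ gives $\ker\rho\subseteq\ker q=F$, while $\rho$ is injective on $\tilde{G}\supseteq F$, we get $\ker\rho=1$, so $\rho$ is an isomorphism and $E\cong\tilde{G}_{\psi}$. As $\tilde{G}$ is finitely generated and $\psi$ is a monomorphism with property $\mathfrak{P}$, Proposition 4.5 shows $\tilde{G}_{\psi}\cong E$ is residually finite, which is exactly condition (iii) of Theorem 2.10 for $G_{\phi}$. I expect the identification $E\cong\tilde{G}_{\psi}$—and in particular the injectivity of $\rho$, which rests on the computation $\ker q=F$—to be the main obstacle, since everything else reduces directly to Lemma 4.6 and Proposition 4.5.
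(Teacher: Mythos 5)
Your proposal is correct and follows essentially the same route as the paper: pull back $G$ to $\tilde{G}=\pi^{-1}(G)$, observe that conjugation by a preimage of $t$ gives a monomorphism $\psi$ of $\tilde{G}$ preserving $F$, apply Lemma 4.6 to get property $\mathfrak{P}$ for $\psi$, identify $E\cong\tilde{G}_{\psi}$, and conclude via Proposition 4.5 and Theorem 2.10(iii). The only difference is that you carefully verify the identification $E\cong\tilde{G}_{\psi}$ and the hypotheses of Lemma 4.6, which the paper's proof leaves implicit.
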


\begin{proof} Assume $F\rightarrowtail E\stackrel{\epsilon}{\twoheadrightarrow} G_{\phi}$ is a group extension with $F$ finite. Set $H=\epsilon^{-1}(G)$, and let $u\in E$ such that $\epsilon(u)=t$. Conjugation by $u$ induces a monomorphism $\psi: H\to H$ such that $\psi (F)=F$. Moreover, we have $E\cong H_{\psi}$. Also, since $\psi$ induces $\phi:G\to G$, $\psi$ has property $\mathfrak{P}$ by Lemma 4.6, so that $E$ is residually finite. Therefore, $G_{\phi}$ is highly residually finite.
\end{proof}

The above proposition, combined with Theorem 4.2 and Proposition 4.5, yields the following result.

\begin{theorem} If $G$ is a free group of finite rank and $\phi:G\to G$ a monomorphism, then $G_{\phi}$ is highly residually finite.
\end{theorem}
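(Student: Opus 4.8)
The plan is to combine three results already established in this section, taking the free group of finite rank as a finitely generated highly residually finite base group. Recall that a free group is highly residually finite (as noted immediately following Theorem 2.10) and, having finite rank, is finitely generated. Thus $G$ meets the hypotheses required to apply Proposition 4.7 --- provided we can verify that the monomorphism $\phi$ has property $\mathfrak{P}$.

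First I would invoke Theorem 4.2 (Borisov--Sapir): since $G$ is a finitely generated free group and $\phi\colon G\to G$ a monomorphism, the ascending HNN extension $G_\phi$ is residually finite. Next, because $G$ is finitely generated and $\phi$ is a monomorphism, Proposition 4.5 translates this residual finiteness directly into the assertion that $\phi$ has property $\mathfrak{P}$. With this in hand, the final step is to apply Proposition 4.7: $G$ is a finitely generated highly residually finite group and $\phi$ is a monomorphism with property $\mathfrak{P}$, whence $G_\phi$ is highly residually finite, as desired.

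The assembly is purely formal; essentially all of the depth is concentrated in the cited Borisov--Sapir theorem, which secures residual finiteness of $G_\phi$ by algebraic-geometric methods. The genuine leverage supplied by this section lies in Proposition 4.7, which upgrades mere residual finiteness of an ascending HNN extension to highly residual finiteness once property $\mathfrak{P}$ is known. Consequently there is no substantive obstacle beyond correctly matching the hypotheses of the three ingredients; the only point meriting a moment's care is the equivalence, recorded in Proposition 4.5, between residual finiteness of $G_\phi$ and property $\mathfrak{P}$ of $\phi$, since it is precisely this equivalence that allows the residual-finiteness output of Borisov--Sapir to feed into the highly-residual-finiteness machinery of Proposition 4.7.
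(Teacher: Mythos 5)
Your proposal is correct and is exactly the paper's argument: the paper proves this theorem by combining Proposition 4.7 with Theorem 4.2 and Proposition 4.5, precisely the three ingredients you assemble in the same order (Borisov--Sapir gives residual finiteness of $G_\phi$, Proposition 4.5 converts this into property $\mathfrak{P}$ for $\phi$, and Proposition 4.7 then upgrades to highly residual finiteness). Your hypothesis-checking (free of finite rank $\Rightarrow$ finitely generated, and free $\Rightarrow$ highly residually finite as noted after Theorem 2.10) is also the implicit content of the paper's one-line proof.
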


Now we proceed to identify some classes of groups for which every endomorphism has property $\mathfrak{P}$. The first is the class of finite groups, for which every endomorphism satisfies the property trivially.
 
\begin{lemma} If $G$ is a finite group, then every endomorphism of $G$ has property $\mathfrak{P}$.
\end{lemma}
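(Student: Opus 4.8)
The plan is to verify property $\mathfrak{P}$ directly from its definition (Definition preceding Lemma 4.5) for an arbitrary endomorphism $\phi:G\to G$ of a finite group $G$. Recall that $\phi$ has property $\mathfrak{P}$ if, for every $g\in G$, there exists $N\unlhd_f G$ such that, for all $i\geq 0$, we have $\phi^i(g)\in N$ if and only if $\phi^i(g)=1$. Since $G$ is finite, the only subtlety is to produce, for each fixed $g$, a single normal subgroup of finite index that separates the nontrivial iterates $\phi^i(g)$ from the identity.

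The key observation is that the trivial subgroup $N=\{1\}$ already works. Indeed, $\{1\}$ is a normal subgroup of $G$, and because $G$ is finite it automatically has finite index. For this choice the condition $\phi^i(g)\in N$ reads $\phi^i(g)=1$, which is exactly the equivalence $\phi^i(g)\in N \Leftrightarrow \phi^i(g)=1$ demanded by property $\mathfrak{P}$. Since $g\in G$ was arbitrary, the endomorphism $\phi$ has property $\mathfrak{P}$. This argument is uniform in $\phi$, so every endomorphism of $G$ qualifies.

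There is essentially no obstacle here: the finiteness of $G$ collapses the defining condition, since the requirement that the separating subgroup have finite index is vacuous when the whole group is finite. The only point worth stating explicitly, to make the proof self-contained, is that the trivial subgroup is normal of finite index in any finite group, so it is a legitimate choice of $N$. I would present this as a one-line verification rather than invoking any of the earlier machinery, and I expect the author's proof to amount to the same remark that the condition is satisfied trivially.
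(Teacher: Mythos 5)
Your proof is correct and matches the paper's intent exactly: the paper states this lemma without proof, remarking only that the property holds trivially for finite groups, and your choice of $N=\{1\}$ (normal of finite index since $G$ is finite, making the condition $\phi^i(g)\in N \Leftrightarrow \phi^i(g)=1$ a tautology) is precisely that trivial verification spelled out.
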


Next we consider endomorphisms of finitely generated abelian groups.

\begin{lemma} If $A$ is a finitely generated abelian group, then any homomorphism $\phi:A\to A$ has property $\mathfrak{P}$. 
\end{lemma}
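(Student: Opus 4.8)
The plan is to write the group additively and reduce the problem to a statement about the orbit of a single vector under an integer matrix. Write $A\cong \mathbb Z^d\oplus T$ with $T$ the (finite) torsion subgroup, and let $\pi\colon A\to A/T\cong \mathbb Z^d$ be the projection. Since $T$ is characteristic, $\phi$ induces an endomorphism of $\mathbb Z^d$ given by an integer matrix $M$, so that $\pi(\phi^i(a))=M^i\bar a$, where $\bar a=\pi(a)$. Because $A$ is abelian, every finite-index subgroup is normal, so to verify property $\mathfrak P$ for a fixed $a\in A$ I must produce a single $N\leq_f A$ with $\phi^i(a)\in N\Leftrightarrow \phi^i(a)=0$ for all $i\geq 0$.

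Since $M$ is linear, the set $\{\,i:M^i\bar a=0\,\}$ is either empty or a terminal segment $\{i\geq k\}$. If $\bar a=0$ or this set is nonempty, then $\phi^i(a)\in T$ for all large $i$, whence the whole orbit $\{\phi^i(a):i\geq 0\}$ is finite; as finitely generated abelian groups are residually finite, I can then take $N$ to be the intersection of the finitely many finite-index subgroups each avoiding one of the nonzero orbit elements. The substantive case is therefore $M^i\bar a\neq 0$ for all $i\geq 0$. Here $\phi^i(a)\neq 0$ for every $i$, so it suffices to exhibit a finite-index sublattice $L\leq \mathbb Z^d$ with $M^i\bar a\notin L$ for all $i$ and set $N=\pi^{-1}(L)$: property $\mathfrak P$ then holds vacuously, both sides of the equivalence being false.

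The main obstacle is this last case, since the orbit $\{M^i\bar a\}$ may be infinite, and an arbitrary infinite set of nonzero lattice points cannot be avoided by any finite-index sublattice; I must instead exploit the dynamics of $M$. I would factor the characteristic polynomial of $M$ as $\chi_M(x)=x^a\psi(x)$ with $\psi(0)\neq 0$, and choose a prime $p$ dividing neither $\psi(0)$ nor, for $0\leq i\leq a$, the (nonzero) vector $M^i\bar a$; all but finitely many primes qualify. For such $p$ the factors $x^a$ and $\bar\psi$ are coprime in $\mathbb F_p[x]$, so Fitting's lemma splits $\mathbb F_p^{\,d}=\ker(\bar M^a)\oplus S$ into $\bar M$-invariant summands, on the second of which $\bar M$ is invertible. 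Writing $\bar a=\bar a_0+\bar a_1$ accordingly, the condition $M^a\bar a\not\equiv 0\pmod p$ forces $\bar a_1\neq 0$, and for $i\geq a$ one has $\bar M^i\bar a=\bar M^i\bar a_1\neq 0$ by invertibility of $\bar M$ on $S$; together with the choice made for $0\leq i<a$, this shows the orbit of $\bar a$ avoids $0$ modulo $p$. Taking $L=p\mathbb Z^d$ then completes the argument.
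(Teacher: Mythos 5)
Your proof is correct, but it takes a genuinely different route from the paper's. Both arguments rest on the same two pillars: pass to the torsion-free quotient $A/T\cong\mathbb{Z}^d$ and produce a prime $p$ so that the orbit $\{M^i\bar a\}$ never vanishes modulo $p$ (then take the preimage of $p\mathbb{Z}^d$), and dispose of the degenerate case, where the orbit eventually lands in torsion, by residual finiteness. The difference is in how non-injectivity of $\phi$ is handled. The paper first treats monomorphisms: there the induced endomorphism $\bar\phi$ of the free part has nonzero determinant, so it suffices to choose $p$ dividing neither $\det\bar\phi$ nor all components of $\bar a$, making $\bar\phi$ invertible mod $p$. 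It then reduces the general case to the injective one group-theoretically, by passing to $A/C$ where $C=\bigcup_{i}\operatorname{Ker}\phi^i$ is the eventual kernel (the same device the paper reuses for free groups in Lemma 4.13). You instead absorb the non-injectivity into the linear algebra: you keep the possibly singular matrix $M$, factor $\chi_M=x^m\psi$ with $\psi(0)\neq 0$, choose $p\nmid\psi(0)$ and not dividing the finitely many initial orbit vectors, and use Cayley--Hamilton mod $p$ plus coprimality of $x^m$ and $\bar\psi$ to split $\mathbb{F}_p^{\,d}=\ker(\bar M^m)\oplus S$ with $\bar M$ invertible on $S$; the condition $M^m\bar a\not\equiv 0\pmod p$ forces a nonzero $S$-component of $\bar a$, which invertibility preserves forever after. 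When $\phi$ is injective one has $m=0$ and $\psi(0)=\pm\det M$, so your condition specializes exactly to the paper's; your argument is thus a one-pass generalization, at the cost of slightly heavier linear algebra, whereas the paper's steps are each more elementary but require the two-stage reduction. Two cosmetic points: you overload the letter $a$ (both an element of $A$ and the multiplicity of $x$ in $\chi_M$), and you should state explicitly that ``$p$ divides a vector'' means $p$ divides every coordinate, so that each nonzero vector excludes only finitely many primes.
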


\begin{proof} First we consider the case when $\phi$ is a monomorphism. We may decompose $A$ as $A= F\oplus \bar{A}$ such that $\bar{A}$ is the direct sum of finitely many copies of $\mathbb Z$ and $F$ is finite. Let $\bar{\phi}:\bar{A}\to \bar{A}$ be the monomorphism induced by $\phi$. Assume  $a\in A$ with $a\neq 0$. We need to find a subgroup of finite index in $A$ that does not contain $\phi^i(a)$ for all $i\geq 0$.  Let $\bar{a}$ be the $\bar{A}$-portion of $a$. If $\bar{a}=0$, then $\bar{A}$ is a subgroup with the properties we require.
Suppose $\bar{a}\neq 0$. Let $p$ be a prime such that $p$ does not divide all of the components of $\bar{a}$ and also does not divide $\mbox{det}\ \bar{\phi}$. Then, for each integer $i\geq 0$,  the prime $p$ does not divide all of the components of $\bar{\phi}^i(\bar{a})$. Hence $\phi^i(a)\notin pA$
for all $i\geq 0$, so that $pA$ can serve as the desired subgroup.

Now consider the case where $\phi$ is not injective. Let $C=\bigcup_{i=1}^{\infty}\mbox{Ker}\ \phi^i$.
Then $C\leq A$ and $\phi$ induces a monomorphism $\phi^*: A/C\to A/C$. Assume $a\in A$ and $a\neq 0$.  We need to establish the existence of a subgroup $D<_f A$ such that $\phi^i(a)\in D\Leftrightarrow \phi^i(a)=0$.  If $a\notin C$, we can obtain such a subgroup by applying the result for monomorphisms to $\phi^*$. 
Now suppose $a\in C$. Let $k$ be the smallest positive integer such that $\phi^k(a)=0$. Since $A$ is residually finite, there exists $D<_fA$ such that $\phi^i(a)\notin D$ for $0\leq i<k$, giving us the desired subgroup. 
\end{proof}

With the aid of Lemma 4.6, we can easily extend the above result to polycyclic groups.

\begin{lemma}If $G$ is a polycyclic group, then any endomorphism $\phi:G\to G$ has property $\mathfrak{P}$.
\end{lemma}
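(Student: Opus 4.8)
The plan is to induct on the derived length $d$ of the polycyclic group $G$, reducing to the finitely generated abelian case by means of Lemma 4.6. If $d\leq 1$ then $G$ is abelian, and since it is polycyclic it is finitely generated, so Lemma 4.10 applies directly. For the inductive step I would assume $d>1$ and take $N=G^{(d-1)}$, the last nontrivial term of the derived series of $G$.

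The crucial point is the choice of $N$. Because each term of the derived series is fully invariant, $\phi(N)\leq N$ holds automatically for the arbitrary endomorphism $\phi$; this is precisely where one must resist the temptation to work with a merely characteristic subgroup, which an endomorphism (as opposed to an automorphism) need not preserve. Moreover $N$ is abelian, since $[N,N]=G^{(d)}=1$, and, being a subgroup of a polycyclic group, it is finitely generated abelian; in particular $N$ is finitely generated and residually finite. The quotient $Q=G/N$ is polycyclic of derived length $d-1$, and hence highly residually finite.

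With $N$ so chosen, $\phi$ restricts to an endomorphism of $N$ and induces an endomorphism $\bar{\phi}$ of $Q$. By Lemma 4.10 the restriction $\phi|_N$ has property $\mathfrak{P}$, and by the inductive hypothesis applied to the polycyclic group $Q$ of smaller derived length, $\bar{\phi}$ has property $\mathfrak{P}$ as well. All the hypotheses of Lemma 4.6 are then in place: we have an extension $N\rightarrowtail G\twoheadrightarrow Q$ with $N$ finitely generated and residually finite and $Q$ highly residually finite, together with $\phi(N)\leq N$ and induced maps on $N$ and $Q$ that both have property $\mathfrak{P}$. Lemma 4.6 therefore yields that $\phi$ itself has property $\mathfrak{P}$, completing the induction.

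The only genuine obstacle is securing an endomorphism-invariant abelian normal subgroup through which to factor; exploiting the full invariance of the derived-series terms resolves this cleanly, after which the argument is just an assembly of Lemmas 4.6 and 4.10.
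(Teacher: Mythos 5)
Your proof is correct and follows essentially the same route as the paper: induction on the derived length, feeding a derived-series extension into Lemma 4.6 and using Lemma 4.10 for the abelian piece. The only (immaterial) difference is the direction of the peeling: the paper uses the extension $G'\rightarrowtail G\twoheadrightarrow G_{ab}$, applying Lemma 4.10 to the quotient $G_{ab}$ and the inductive hypothesis to the kernel $G'$, whereas you use $G^{(d-1)}\rightarrowtail G\twoheadrightarrow G/G^{(d-1)}$, applying Lemma 4.10 to the kernel and the inductive hypothesis to the quotient; both variants rest on the full invariance of the derived-series terms and verify the hypotheses of Lemma 4.6 in the same way.
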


\begin{proof} We proceed by induction on the length of the derived series of $G$, the base case having been established in Lemma 4.10. Assume $G'\neq 1$ and consider the extension $G'\rightarrowtail G\twoheadrightarrow G_{ab}$. This extension  satisfies all of the hypotheses of Lemma 4.6 with respect to any endomorphism $\phi:G\to G$; thus the conclusion follows.
\end{proof}

As an immediate consequence of Lemma 4.11, we obtain that ascending HNN extensions of polycyclic groups are highly residually finite.

\begin{theorem} If $G$ is a polycyclic group and $\phi:G\to G$ a monomorphism, then $G_\phi$ is highly residually finite.
\end{theorem}

Theorem 4.12 can also be deduced readily from T. Hsu and D. Wise's \cite{wise} theorem that ascending HNN extensions of virtually polycyclic groups are residually finite.  Nevertheless, we have adopted our approach because, in addition to providing an alternative, shorter proof for  Hsu and Wise's result, it allows us to prove a theorem that generalizes both Theorem  4.12 and Theorem 4.8. Before we can accomplish this, however, we need to prove that endomorphisms of finitely generated free groups satisfy property $\mathfrak{P}$ as well.

\begin{lemma} If $G$ is a finitely generated free group and $\phi:G\to G$ an endomorphism, then $\phi$ has property $\mathfrak{P}$.
\end{lemma}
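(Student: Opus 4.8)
The plan is to reduce the general (possibly non-injective) case to the injective case, which is already in hand: combining Theorem 4.2 with Proposition 4.5 shows that every \emph{monomorphism} of a finitely generated free group has property $\mathfrak{P}$. The only obstruction is that a non-injective $\phi$ may collapse part of $G$, so I would first locate a canonical finitely generated free subgroup on which $\phi$ acts injectively, prove property $\mathfrak{P}$ there, and then transfer the conclusion back to $G$.

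For the reduction, I would consider the descending chain of images $G\supseteq \phi(G)\supseteq \phi^2(G)\supseteq\cdots$. Each $\phi^i(G)$ is a finitely generated subgroup of a free group, hence itself free of finite rank, and the ranks are non-increasing because $\phi^{i+1}(G)$ is a homomorphic image of $\phi^i(G)$. Thus the ranks stabilise: there is an $m$ with $\mathrm{rank}(\phi^i(G))$ constant for $i\geq m$. Writing $L=\phi^m(G)$, the map $\phi$ restricts to a surjection of $L$ onto $\phi(L)=\phi^{m+1}(G)\leq L$, a surjection between free groups of equal finite rank; since finitely generated free groups are Hopfian, such a surjection is an isomorphism, so the restriction $\psi=\phi|_L:L\to L$ is a monomorphism. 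Applying Theorem 4.2 and Proposition 4.5 to $\psi$, the monomorphism $\psi$ of the finitely generated free group $L$ has property $\mathfrak{P}$.

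To finish, I would fix $g\in G$ with $g\neq 1$ and produce $N\lhd_f G$ missing every nontrivial element of the forward orbit $\{\phi^i(g):i\geq 0\}$. If $\phi^k(g)=1$ for some $k$, then only finitely many orbit elements are nontrivial and residual finiteness of $G$ supplies $N$ directly. Otherwise every $\phi^i(g)$ is nontrivial; set $h=\phi^m(g)\in L$, so that $h\neq 1$ and $\psi^j(h)=\phi^{m+j}(g)$ for all $j\geq 0$. By property $\mathfrak{P}$ for $\psi$, in the form of Lemma 4.4, there is $M\lhd_f L$ with $\psi^j(h)\notin M$ for all $j$, i.e. $\phi^i(g)\notin M$ for all $i\geq m$. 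Here I would invoke the key structural fact, Marshall Hall's theorem, which guarantees that a finitely generated subgroup of a free group is topologically embedded; hence there is $N_1\leq_f G$ with $N_1\cap L\leq M$, and replacing $N_1$ by its normal core gives $N_1'\lhd_f G$ with $N_1'\leq N_1$. Since each $\phi^i(g)$ with $i\geq m$ lies in $L$ but not in $M$, it lies outside $N_1'\cap L$ and therefore outside $N_1'$. Finally residual finiteness yields $N_2\lhd_f G$ avoiding the finitely many nontrivial terms $\phi^i(g)$ with $0\leq i<m$, and $N=N_1'\cap N_2$ is the required subgroup.

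I expect the main obstacle to be the passage from $L$ back to $G$: property $\mathfrak{P}$ for $\psi$ produces a finite-index subgroup of $L$ only, and $L=\phi^m(G)$ may have infinite index in $G$, so the transfer rests essentially on the topological embedding $L\leq_t G$. Establishing (or correctly citing) that finitely generated subgroups of free groups are topologically embedded — via M. Hall's theorem that such a subgroup is a free factor of a finite-index subgroup — is the crucial point; the Hopficity argument that pins $\psi$ down as a monomorphism is the other place where finite rank is used in an essential way.
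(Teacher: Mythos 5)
Your proof is correct, but the second half runs along a genuinely different track from the paper's. Both arguments begin identically: the ranks of the images $\phi^i(G)$ stabilize, Hopficity of finitely generated free groups produces a monomorphism of a free group of finite rank, Theorem 4.2 and Proposition 4.5 give that monomorphism property $\mathfrak{P}$, and eventually-trivial orbits are handled by plain residual finiteness. The divergence is where that monomorphism lives and how property $\mathfrak{P}$ is transported back to $\phi$. The paper places it on the \emph{quotient} $G/K$, where $K=\mathrm{Ker}\,\phi^k$ is the stable kernel: since $\mathrm{Ker}\,\phi^{k+1}=\mathrm{Ker}\,\phi^k$, the induced map $\bar{\phi}:G/K\to G/K$ is injective, $G/K\cong\phi^k(G)$ is free of finite rank, and the transfer is then free of charge, because a normal finite-index subgroup of $G/K$ pulls back along $G\twoheadrightarrow G/K$ to one of $G$. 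You place it on the \emph{image subgroup} $L=\phi^m(G)$, and must then push finite-index subgroups of $L$ up to $G$; since $[G:L]$ may be infinite, this genuinely requires $L\leq_t G$, which you correctly identify as the crux and correctly derive from M.~Hall's theorem ($L$ is a free factor of some $H\leq_f G$, and the retraction $H\to L$ pulls $M\leq_f L$ back to a finite-index subgroup of $G$ meeting $L$ inside $M$). So your argument is sound but imports a classical nontrivial theorem that the paper's kernel/quotient device avoids entirely; in exchange, you skip the kernel bookkeeping (well-definedness and injectivity of the map induced on the quotient), and your separability viewpoint ties in naturally with the topological-embedding theme of Section 3. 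Since $G/K\cong L$, the two proofs are in effect dual to one another: pulling back along a quotient map versus separating inside a subgroup.
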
 

\begin{proof} For each $i\geq 0$,  $\phi^i(G)$ is a finitely generated free group; moreover, the rank of $\phi^{i+1}(G)$ does not exceed the rank of $\phi^i(G)$. Hence there exists $k\geq 0$ such that the rank of $\phi^k(G)$ is equal to the rank of $\phi^{k+1}(G)$.  It follows, then, from the fact that finitely generated free groups are hopfian that $\phi$ induces an injection from $\phi^k(G)\to \phi^{k+1}(G)$. Hence $\mbox{Ker}\ \phi^{k+1}=\mbox{ Ker}\ \phi^k$. Now let $K=\mbox{Ker}\ \phi^k$. Then $\phi (K)\leq K$, and $\phi$ induces a monomorphism $\bar{\phi}: G/K \to G/K$. Moreover, $G/K\cong \phi^k(G)$, so that $G/K$ is free of finite rank. Thus, by Theorem 4.2 and Proposition 4.5, $\bar{\phi}$ has property $\mathfrak{P}$. 

Now we are ready to show that $\phi$, too, has property $\mathfrak{P}$.  Let $g\in G$ with $g\neq 1$. We desire a normal subgroup of finite index that misses all the nontrivial elements of the form $\phi^i(g)$ for $i\geq 0$.  If $g\notin K$, then such a subgroup may be obtained by invoking the fact that $\bar{\phi}:G/K\to G/K$ has property $\mathfrak{P}$.
Now consider the case where $g\in K$. Let $l$ be the smallest positive integer such that $\phi^l(g)=1$. Since $G$ is residually finite, there exists $N\lhd_f G$ such that $\phi^i(g)\notin N$ for $0\leq i<l$. The subgroup $N$, then,  enjoys the properties we seek.
\end{proof}

 Armed with the above lemma,  we are prepared to prove our most general result concerning ascending HNN extensions.

\begin{theorem} Let
$$1=G_0\unlhd G_1\unlhd G_2\unlhd \cdots G_{r-1}\unlhd G_r=G$$
be a normal series in a group $G$ such that each factor group $G_i/G_{i-1}$ is either finite, free of finite rank or polycyclic. If $\phi:G\to G$ is a monomorphism such that $\phi(G_i)\leq G_i$ for $1\leq i\leq r$, then $G_{\phi}$ is highly residually finite.
\end{theorem}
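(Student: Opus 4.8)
The plan is to verify the hypotheses of Proposition 4.7 for the pair $(G,\phi)$, namely that $G$ is finitely generated and highly residually finite and that $\phi$ has property $\mathfrak{P}$; granting these, Proposition 4.7 yields at once that $G_{\phi}$ is highly residually finite. Everything therefore reduces to establishing these three facts about $G$ and $\phi$, each by induction along the given series.

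First I would record the two facts about $G$ itself. Since every factor $G_i/G_{i-1}$ is finite, free of finite rank or polycyclic, each factor is finitely generated, so an easy induction on $i$ using the extensions $G_{i-1}\rightarrowtail G_i\twoheadrightarrow G_i/G_{i-1}$ shows that every $G_i$, and in particular $G=G_r$, is finitely generated. Likewise each factor is highly residually finite (finite, free and polycyclic groups all being so) and finitely generated, so repeated application of Corollary 2.11 to the same extensions shows that every $G_i$ is highly residually finite. In particular $G$ is finitely generated and highly residually finite, and each $G_{i-1}$ is a finitely generated residually finite group.

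The substance of the argument is the claim that $\phi$ has property $\mathfrak{P}$, which I would prove by induction on $r$. When $r=1$ the group $G=G_1$ is finite, free of finite rank or polycyclic, so $\phi$ has property $\mathfrak{P}$ by Lemma 4.9, Lemma 4.13 or Lemma 4.11, respectively. For the inductive step consider the extension $G_{r-1}\rightarrowtail G\twoheadrightarrow G/G_{r-1}$, noting that $\phi(G_{r-1})\leq G_{r-1}$ by hypothesis. Here $N=G_{r-1}$ is finitely generated and residually finite by the previous paragraph, and $Q=G/G_{r-1}$ is the single top factor $G_r/G_{r-1}$, hence highly residually finite. The map induced by $\phi$ on $N=G_{r-1}$ is again a monomorphism respecting the length-$(r-1)$ series $1=G_0\unlhd\cdots\unlhd G_{r-1}$, so it has property $\mathfrak{P}$ by the inductive hypothesis; and the map induced on $Q$ is an endomorphism of a group of one of the three types, so it has property $\mathfrak{P}$ by Lemma 4.9, 4.13 or 4.11. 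Lemma 4.6 then furnishes property $\mathfrak{P}$ for $\phi$, completing the induction and, with Proposition 4.7, the proof.

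The one point that requires care, and which dictates the shape of the induction, is that the map induced by $\phi$ on the quotient $Q=G/G_{r-1}$ need not be injective, whereas the map induced on $N=G_{r-1}$ is injective because $\phi$ is. This is why I would peel the factors off from the top, recursing on $N=G_{r-1}$ (where $\phi$ restricts to a monomorphism and the inductive hypothesis applies) while disposing of the quotient $Q$ directly, using that the base lemmas grant property $\mathfrak{P}$ for arbitrary endomorphisms of the three types rather than merely injective ones. No genuinely new difficulty arises beyond assembling Lemma 4.6, Corollary 2.11 and Proposition 4.7 correctly; the real work has already been absorbed into those results.
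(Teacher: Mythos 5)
Your proposal is correct and follows essentially the same route as the paper: induction on the length of the series, peeling off the top factor via the extension $G_{r-1}\rightarrowtail G\twoheadrightarrow G/G_{r-1}$, applying Lemma 4.6 together with the base Lemmas 4.9, 4.11 and 4.13, and then concluding through Proposition 4.7. The only (harmless) difference is that the paper proves the property-$\mathfrak{P}$ lemma for arbitrary endomorphisms respecting the series, whereas you keep monomorphisms throughout and observe that injectivity is only needed on the bottom group, not the quotient; you also spell out explicitly, via Corollary 2.11, why $G$ is finitely generated and highly residually finite, a point the paper leaves implicit.
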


\begin{proof} This follows immediately from the lemma below.
\end{proof}

\begin{lemma} Let
$$1=G_0\unlhd G_1\unlhd G_2\unlhd \cdots G_{r-1}\unlhd G_r=G$$
be a normal series in a group $G$ such that each factor group $G_i/G_{i-1}$ is either finite, free of finite rank or polycyclic. If $\phi:G\to G$ is a homomorphism such that $\phi(G_i)\leq G_i$ for $1\leq i\leq r$, then $\phi$ has property $\mathfrak{P}$. 
\end{lemma}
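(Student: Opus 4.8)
The plan is to argue by induction on the length $r$ of the normal series. The base case $r=1$ reduces to a single group $G=G_1/G_0$ that is finite, free of finite rank, or polycyclic, and in each of these three situations the assertion that $\phi$ has property $\mathfrak{P}$ is exactly the content of Lemma 4.9, Lemma 4.13, and Lemma 4.11, respectively. Thus the work lies entirely in the inductive step, where I intend to apply the extension lemma for property $\mathfrak{P}$, namely Lemma 4.6, to the extension $G_{r-1}\rightarrowtail G\twoheadrightarrow G/G_{r-1}$.

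Before doing so I must verify the standing hypotheses of Lemma 4.6 for this extension: that $G_{r-1}$ is finitely generated and residually finite, and that $G/G_{r-1}$ is highly residually finite. I would establish this through a preliminary induction on $i$ showing that every $G_i$ is finitely generated and highly residually finite. The base step is $G_1$, which is finite, free of finite rank, or polycyclic, and hence finitely generated and (as recorded after Theorem 2.10) highly residually finite. For the inductive step I consider $G_{i-1}\rightarrowtail G_i\twoheadrightarrow G_i/G_{i-1}$: here $G_{i-1}$ is finitely generated and highly residually finite by the inductive hypothesis, while $G_i/G_{i-1}$ is finitely generated and highly residually finite because it is of one of the three permitted types. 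Corollary 2.11 then renders $G_i$ highly residually finite, and it is finitely generated as an extension of one finitely generated group by another. In particular $G_{r-1}$ is finitely generated and, being highly residually finite, residually finite, while $G/G_{r-1}=G_r/G_{r-1}$ is highly residually finite.

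With these hypotheses secured, I complete the inductive step as follows. Since $\phi(G_{r-1})\leq G_{r-1}$, the map $\phi$ restricts to an endomorphism of $G_{r-1}$ that preserves the subseries $1=G_0\unlhd\cdots\unlhd G_{r-1}$ of length $r-1$, so by the inductive hypothesis this restriction has property $\mathfrak{P}$. The endomorphism $\bar{\phi}$ induced on $G/G_{r-1}$ has property $\mathfrak{P}$ by Lemma 4.9, Lemma 4.13, or Lemma 4.11, according to the type of $G_r/G_{r-1}$. Applying Lemma 4.6 to the extension $G_{r-1}\rightarrowtail G\twoheadrightarrow G/G_{r-1}$ then yields that $\phi$ itself has property $\mathfrak{P}$, completing the induction.

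The only genuinely delicate point, and where I expect the main obstacle to lie, is the residual finiteness of $G_{r-1}$ demanded by Lemma 4.6. This does not follow merely from the residual finiteness of the individual factors $G_j/G_{j-1}$, since an extension of residually finite groups need not be residually finite; it is precisely here that the highly residually finite machinery of Corollary 2.11 is indispensable, as it guarantees that residual finiteness survives each step up the series. Everything else is a routine orchestration of the base-case lemmas with Lemma 4.6.
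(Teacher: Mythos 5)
Your proposal is correct and takes essentially the same route as the paper: induction on $r$, with the base case supplied by Lemmas 4.9, 4.11 and 4.13, and the inductive step obtained by applying Lemma 4.6 to the extension $G_{r-1}\rightarrowtail G\twoheadrightarrow G/G_{r-1}$. Your preliminary sub-induction using Corollary 2.11 to show each $G_i$ is finitely generated and highly residually finite simply makes explicit the hypotheses of Lemma 4.6 (finite generation and residual finiteness of $G_{r-1}$, highly residual finiteness of $G/G_{r-1}$) that the paper's proof leaves tacit, which is a welcome tightening rather than a different argument.
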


\begin{proof} We induct on $r$, the case $r=1$ having already been established. Assume $r>1$. Consider the extension $G_{r-1}\rightarrowtail G\twoheadrightarrow G/G_{r-1}$. By Lemmas 4.9, 4.11 and 4.13, the endomorphism of  $G/G_{r-1}$ induced by $\phi$ has property $\mathfrak{P}$; moreover, the map from $G_{r-1}$ to $G_{r-1}$ induced by $\phi$ also has property $\mathfrak{P}$ by the inductive hypothesis. Therefore, since $G/G_{r-1}$ is highly residually finite, the conclusion holds by 
Lemma 4.6.
\end{proof}

Two special cases of Theorem 4.14 are worth emphasizing.

\begin{corollary}
If $G$ is a finitely generated virtually free group and $\phi:G\to G$ a monomorphism, then $G_{\phi}$ is highly residually finite.
\end{corollary}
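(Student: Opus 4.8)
The plan is to reduce the statement to Theorem 4.14 by exhibiting a $\phi$-invariant normal series on $G$ whose factors are of the permitted types. Since $G$ is finitely generated and virtually free, it contains a free subgroup of finite index; taking the normal core of such a subgroup, I obtain a subgroup $F\unlhd_f G$ that is free of finite rank. Indeed, the core is contained in the original free subgroup, hence free by the Nielsen--Schreier theorem, and it has finite index in $G$, so it is finitely generated and thus of finite rank. This yields the normal series $1\unlhd F\unlhd G$, whose factors $F$ and $G/F$ are free of finite rank and finite, respectively.

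The one obstacle is that Theorem 4.14 requires the series to be $\phi$-invariant, and the free subgroup $F$ produced above need not satisfy $\phi(F)\leq F$. This is precisely the situation addressed by Lemma 4.4: applying that lemma to the finite-index normal subgroup $F$, I obtain $M\unlhd_f G$ with $M\leq F$ and $\phi(M)\leq M$. Since $M$ is a finite-index subgroup of the free group $F$, it is again free of finite rank. The series $1\unlhd M\unlhd G$ is now $\phi$-invariant---the inclusion $\phi(M)\leq M$ is guaranteed by Lemma 4.4, while $\phi(G)\leq G$ holds automatically---and its factors $M$ (free of finite rank) and $G/M$ (finite, since $[G:M]<\infty$) are of the required types.

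With this series in hand, Theorem 4.14 applies directly and yields that $G_{\phi}$ is highly residually finite. No further work is needed: the entire content of the argument lies in producing the $\phi$-invariant free-by-finite series, and Lemma 4.4 supplies exactly the ingredient that makes this possible.
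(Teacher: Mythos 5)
Your proof is correct and follows the paper's intended route: the paper presents this corollary as a special case of Theorem 4.14 without spelling out the reduction, and the step you supply---using Lemma 4.4 to shrink the normal core of a finite-index free subgroup to a $\phi$-invariant subgroup $M\unlhd_f G$, which is still free of finite rank---is exactly the missing ingredient that makes the series $1\unlhd M\unlhd G$ satisfy the hypotheses of that theorem.
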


\begin{corollary} If $G$ is a virtually polycyclic group and $\phi:G\to G$ a monomorphism, then $G_{\phi}$ is highly residually finite.
\end{corollary}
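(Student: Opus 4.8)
The plan is to derive this corollary from Theorem 4.14 by exhibiting, for a given monomorphism $\phi\colon G\to G$, a $\phi$-invariant normal series of the sort required there. Since a virtually polycyclic group is polycyclic-by-finite, it is finitely generated, so the hypotheses of Lemma 4.3 are available. The series I would use is simply $1\unlhd M\unlhd G$, where $M$ is a polycyclic normal subgroup of finite index chosen so that $\phi(M)\leq M$; its factors are $M$, which is polycyclic, and $G/M$, which is finite, exactly the kind of factors permitted in Theorem 4.14.

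The only real work is producing such an $M$. First I would fix any polycyclic subgroup $P_0\leq_f G$, guaranteed by the definition of virtual polycyclicity, and replace it by its normal core $P=\bigcap_{g\in G}gP_0g^{-1}$; this $P$ is normal of finite index in $G$ and, being a subgroup of the polycyclic group $P_0$, is itself polycyclic. Applying Lemma 4.3 to $P\unlhd_f G$ then yields a subgroup $M\unlhd_f G$ with $M\leq P$ and $\phi(M)\leq M$. As a subgroup of $P$, the group $M$ is again polycyclic, while $G/M$ is finite. Thus $1\unlhd M\unlhd G$ is a normal series whose factors are polycyclic and finite, and it satisfies $\phi(G_i)\leq G_i$ for each term: trivially for the end terms $1$ and $G$, and by construction for $M$.

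With this series in hand, Theorem 4.14 applies directly and gives that $G_{\phi}$ is highly residually finite. I expect no serious obstacle here; the single point that requires care is the $\phi$-invariance of the finite-index polycyclic subgroup, since a monomorphism need not be surjective and so need not preserve a merely characteristic subgroup. This is precisely the difficulty that Lemma 4.3 is designed to resolve, which is why invoking it is the crux of the argument.
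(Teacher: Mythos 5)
Your proposal is correct and follows the paper's (implicit) route: the paper states this corollary as a direct special case of Theorem 4.14, and your argument supplies exactly the detail that deduction needs, namely a $\phi$-invariant normal series $1\unlhd M\unlhd G$ with $M$ polycyclic and $G/M$ finite. Your use of Lemma 4.3 (applied to the normal core of a finite-index polycyclic subgroup) to secure $\phi(M)\leq M$ is precisely the right tool from the paper, and correctly addresses the one subtlety --- that a non-surjective monomorphism need not preserve characteristic subgroups.
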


In conclusion, we remark that it remains unknown whether the ascending HNN extensions shown to be highly residually finite in this section are also in $\mathcal{A}_n$ for $n>2$.  Unfortunately, the Mayer-Vietoris sequence, employed in the previous section to identify groups in $\mathcal{A}_n$ for $n>2$, sheds no light on this question, because, in an ascending HNN extension, the base group may not be topologically embedded.

\vspace{20pt}

\begin{acknowledgement}{\rm  The author would like to thank the Department of Analysis and Scientific Computing of the Technical University of Vienna for its hospitality while this work was in progress.  In particular, he is deeply grateful to Wolfgang Herfort for being such a generous, accommodating host and for his perspicacious suggestions regarding this article.}
\end{acknowledgement}
\vspace{20pt}


\begin{thebibliography}{15}


\bibitem[{\bf 1}]{sapir}{\sc A. Borisov} and {\sc M. Sapir}. Polynomial maps over finite fields and residual finiteness of mapping tori of group endomorphisms. {\it Invent. Math.} {\bf 160} (2005), 341-356.

\bibitem[{\bf 2}]{burillo}{\sc J. Burillo} and {\sc A. Martino}. Quasi-potency and cyclic subgroup separability. {\it J. Algebra} {\bf 298} (2006), 188-207. 

\bibitem[{\bf 3}]{corson}{\sc J. Corson} and {\sc T. Ratkovich}.  A strong form of residual finiteness for groups. {\it J. Group Theory} {\bf 9} (2006), 497-505

 \bibitem[{\bf 4}]{hall}{\sc P. Hall.} The Frattini subgroups of finitely generated groups. {\it Proc. London Math. Soc.} {\bf 11} (1961), 327-352. 

\bibitem[{\bf 5}]{hewitt}{\sc P. Hewitt.} Extensions of residually finite groups. {\it J. Algebra} {\bf 163} (1994), 757-772.

\bibitem[{\bf 6}]{higman}{\sc G. Higman.} A finitely generated infinite simple group. {\it J. London Math. Soc.} {\bf 26} (1951), 61-64.

\bibitem[{\bf 7}]{wise}{\sc T. Hsu} and {\sc D. Wise}. Ascending HNN extensions of polycyclic groups are residually finite. {\it J. Pure Appl. Algebra} {\bf 182} (2003), 65-78. 

\bibitem[{\bf 8}]{rhem}{\sc A. Rhemtulla} and {\sc M. Shirvani}. The residual finiteness of ascending HNN extensions of certain soluble groups. {\it Illinois J. Math.} {\bf  47} (2003), 477--484. 

\bibitem[{\bf 9}]{profinite}{\sc L. Ribes} and {\sc P. Zalesskii}. {\it Profinite Groups} (Springer, 2000).

\bibitem[{\bf 10}]{segal}{\sc L. Ribes}, {\sc P. Zalesskii} and {\sc D. Segal}. Conjugacy separability and free products of groups with cyclic amalgamation. {\it J. London Math Soc.} {\bf 57} (1998), 609-628. 

\bibitem[{\bf 11}]{rz}{\sc L. Ribes} and {\sc P. Zalesskii}. Conjugacy separability of amalgamated products. {\it J. Algebra} {\bf 179} (1996), 751-774. 

\bibitem[{\bf 12}]{sapwise}{\sc M. Sapir} and {\sc D. Wise}. Ascending HNN extensions of residually finite groups can be non-Hopfian and can have very few finite quotients. {\it J. Pure Appl. Algebra} {\bf 166}  (2002), 191-202. 

\bibitem[{\bf 13}]{schneebeli}{\sc H. Schneebeli}. Group extensions whose profinite completion is exact. {\it Arch. Math.} {\bf 31} (1978), 244-253.

\bibitem[{\bf 14}]{schroeer}{\sc S. Schr\"oer.} Topological methods for complex-analytic Brauer groups. {\it Topology} {\bf 44} (2005), 875-894.

\bibitem[{\bf 15}]{serre}{\sc J-P. Serre}. {\it Galois Cohomology} (Springer, 1997).

\bibitem[{\bf 16}]{tang}{\sc C. Tang}. Conjugacy separability of generalized free products of certain conjugacy separable groups. {\it Canad. Math Bull.} {\bf 38} (1995), 120-127.

\bibitem[{\bf 17}]{wilson}{\sc J. Wilson.} {\it Profinite Groups.} (Clarendon, 1998).

\end{thebibliography}
\end{document}